\documentclass[a4paper, 12pt]{amsart}
\usepackage{a4wide} 
\usepackage[english]{babel} 
\usepackage[T1]{fontenc} 
\usepackage{amsfonts} 
\usepackage{amsmath} 
\usepackage{amssymb} 
\usepackage{amsthm} 
\usepackage{calrsfs} 
\usepackage{stmaryrd} 
\usepackage{graphicx} 
\usepackage{biblatex} 
\usepackage{csquotes} 
\usepackage{tikz} 

\DeclareMathAlphabet{\pazocal}{OMS}{zplm}{m}{n} 

\numberwithin{equation}{section}

\newtheorem{proposition}{Proposition}[section]
\newtheorem{lemma}{Lemma}[section]

\newtheorem{theorem}{Theorem}
\newtheorem{corollary}{Corollary}[section]

\newtheorem{remark}{Remark}[section]

\newcommand{\C}{\mathbb{C}} 
\newcommand{\R}{\mathbb{R}} 

\newcommand{\loc}{\mathrm{loc}} 
\newcommand{\comp}{\mathrm{comp}} 

\newcommand{\Ran}{\mathrm{Ran}} 
\newcommand{\rank}{\mathrm{rank}} 
\newcommand{\tr}{\mathrm{tr}} 
\newcommand{\Sp}{\mathrm{Sp}} 
\newcommand{\disc}{\mathrm{disc}} 
\newcommand{\Res}{\mathrm{Res}} 

\newcommand{\lp}{\left(} 
\newcommand{\rp}{\right)} 
\newcommand{\lb}{\left[} 
\newcommand{\rb}{\right]} 
\newcommand{\lcb}{\left\lbrace} 
\newcommand{\rcb}{\right\rbrace} 
\newcommand{\la}{\left\langle} 
\newcommand{\ra}{\right\rangle} 
\newcommand{\lv}{\left\vert} 
\newcommand{\rv}{\right\vert} 
\newcommand{\lV}{\left\Vert} 
\newcommand{\rV}{\right\Vert} 

\newcommand{\indic}{\mathbf{1}} 

\newcommand{\Dvec}{\mathbf{D}}

\newcommand{\Lcal}{\mathcal{L}}

\newcommand{\Kpaz}{\pazocal{K}}

\newcommand{\Mpaz}{\pazocal{M}}
\newcommand{\Npaz}{\pazocal{N}}

\newcommand{\Ppaz}{\pazocal{P}}

\newcommand{\Rpaz}{\pazocal{R}}

\newcommand{\Tpaz}{\pazocal{T}}

\newcommand{\Dfrak}{\mathfrak{D}}

\newcommand{\Tsf}{\mathsf{T}} 

\DeclareMathOperator{\Supp}{\mathrm{Supp}} 
\DeclareMathOperator{\tq}{\mathrm{ | }} 

\newcommand{\Dom}{\mathrm{Dom}} 
    
\title{Dirac resonances as non-self-adjoint eigenvalues}
\date{\today}
\author{Henry DUMANT}
\address{Univ. Bordeaux, CNRS, Bordeaux INP, IMB, UMR 5251, F-33400 Talence, France }
\email{henry.dumant@math.u-bordeaux.fr}
\begin{document}

\usetikzlibrary{patterns}.

\begin{abstract} 
    We consider resonances of (not necessarily self-adjoint) three-dimensional Dirac operators, defined as poles of the meromorphic continuation of the resolvent. We prove that a region of the resonance set can be characterized by the discrete spectrum of distorted Dirac operators, with preservation of multiplicities. As an application, we prove Kramer's degeneracy under time-reversal symmetry.
\end{abstract}

\maketitle

\section{Introduction}\label{Introduction}

\subsection*{Background.} Quantum resonances admit two complementary mathematical descriptions, each offering a different perspective. Defining resonances as poles of a suitable meromorphic continuation of the resolvent across the essential spectrum provides an intrinsic and global notion, independent of any auxiliary construction, but is not directly amenable to spectral methods. By contrast, realizing resonances as eigenvalues of non-self-adjoint operators makes available the full toolbox of spectral theory. In particular, for resonances lying close to the essential spectrum (the regime of greatest interest in many scattering problems) questions concerning multiplicity and perturbation reduce to the study of discrete eigenvalues. These two approaches were first developed for Schrödinger operators. The first approach originates in the work of Dolph, McLeod and Thoe \cite{DoMcLTh66} but was deeply generalized by Sjöstrand and Zworski \cite{SjZw91}. The second description originated with the method of complex scaling, whose central idea is to deform the configuration space. This approach was first introduced by Aguilar, Balslev and Combes \cite{AgCo71,BaCo71} with analytic dilations and later extended by Hunziker \cite{Hu86} to analytic distortions. The strategy is to first conjugate the operator and then analytically continue the deformation parameter. A more geometric construction was later developed by Sjöstrand and Zworski \cite{SjZw91}, who perform the deformation directly on the configuration space. In their paper, they establish the equivalence between resonances and the discrete spectrum of the deformed operator. Finally, an alternative realization based on escape functions and microlocal analysis was developed by Helffer and Sjöstrand \cite{HeSj86}, while its relation with complex scaling was clarified by Helffer and Martinez \cite{HeMa87}. In the Dirac setting, analytic dilations were introduced by Šeba \cite{Se88} and later generalized to analytic distortions by Khochman \cite{Kh07}, while the escape function approach was developed by Parisse \cite{Pa91}. Meromorphic continuation of the resolvent has also been established in several settings by Kungsmann, Melgard, Cheng and the author \cite{KuMe17,ChMe21,Du25}. Despite these parallel developments, the relation between the two main descriptions has apparently not been addressed in the Dirac setting. The present results will be used in our work in preparation \cite{Du26} to study resonances in the infinite mass limit.

\subsection*{Setting and goals.} In this paper we establish this missing correspondence for two scattering models associated with the Dirac operator in $\R^3$. The first one concerns a compactly supported and bounded matrix-valued potential (see Section \ref{Section3}), while the second deals with exterior domains endowed with MIT bag boundary conditions (see Section \ref{Section4}). They share a common geometric feature: the interaction is confined to a compact region. The free Dirac operator under study is $\beta-i\alpha\cdot\nabla$ ($\beta$ and $\alpha=(\alpha_1,\alpha_2,\alpha_3)$ being the Dirac matrices) whose spectrum is purely continuous and consists in the two branches $(-\infty,-1]\cup[1,\infty)$. Following \cite{SjZw91,Du25}, resonances are defined as poles of the meromorphic continuation of the resolvent (acting on weighted spaces) on the two-sheeted Riemann surface associated with the square root of $z^2-1$ (see Section \ref{Section3} and \ref{Section4} for precise definitions). In the spirit of \cite{Hu86,Kh07}, the distorted operators are obtained by conjugation with analytic deformations which coincide with the identity in a neighbourhood of the interaction region (see Section \ref{Section2} for the exact definition). The underlying mechanism is that the essential spectrum is moved in the complex plane, thereby uncovering resonances as isolated eigenvalues of the distorted operator. Theorems \ref{LinkResonanceEigenvalue} and \ref{MITLinkResonanceEigenvalue} can be stated in an informal way as follows.

\medskip

\textbf{Main result.} \textit{For both compactly supported and bounded perturbations of the free Dirac operator and exterior MIT bag models, a region of the resonance set is realized as the discrete spectrum of suitably distorted operators, with preservation of multiplicities.}

\medskip

This correspondence allows one to transport structural properties of the distorted operator to the resonance set. In particular, Kramers degeneracy is well known for time-reversal symmetric operators (see \cite[Section 10.4.5]{ReWo15}), and our main result allows us to extend this property to resonances (see  Corollaries \ref{KramersDegeneracy} and \ref{MITKramers} for precise statements). This was previously established by Parisse \cite{Pa91} for scalar smooth potentials in the context of resonances defined by escape functions.

\medskip

\textbf{Application.} \textit{Under time-reversal symmetry, these resonances have even multiplicity.}

\subsection*{Proof strategy and organization.} The proof follows the philosophy introduced by Dyatlov and Zworski \cite{DyZw19} for Schrödinger operators. Owing to the explicit structure of the free Dirac operator, a region free of the spectrum of the distorted free operator is obtained by constructing the distorted free resolvent directly by analytic continuation of the free resolvent kernel. We then introduce the compactly supported interaction and prove that localized resolvents of the physical and distorted operators coincide in a region of their common resolvent set. The proof relies almost entirely on elementary resolvent identities and analytic Fredholm theory. The resulting comparison of localized resolvents, together with the fact that localization preserves the ranks of the corresponding residues, yields the correspondence between resonances and discrete eigenvalues. The MIT bag case is obtained by adapting the same strategy to the presence of the boundary condition. Section \ref{Section2} is devoted to the distorted free Dirac operator. Sections \ref{Section3} and \ref{Section4} establish the main result for compactly supported potentials and for the MIT bag model, respectively. A distinctive feature of the argument is the systematic use of adjoints of the distorted operators, which play a crucial role both to establish that the free distorted operator has no discrete spectrum and to prove the evenness of the resonance multiplicities.

\section{The free distorted Dirac operator}\label{Section2}

In this section, we are interested in the free Dirac operator 
\begin{equation}\label{FreeDirac}
    \Dvec:=\beta-i\alpha\cdot\nabla.
\end{equation}
Let us recall that $\Dvec$ is self-adjoint in $L^2:=L^2(\R^3,\C^4)$ on its domain $H^1:=H^1(\R^3,\C^4)$ and that its spectrum is purely continuous equals to 
\begin{equation*}\label{FreeDiracSpectrum}
    \sigma_0:=(-\infty,-1]\cup[1,+\infty).
\end{equation*}

The distortion of $\psi\in L^2$ with parameter $\mu\in\R$ is defined as
\begin{equation*}\label{Distortion}
    (U_\mu\psi)(x):=\sqrt{\lv\det(I_3+\mu \nabla F(x))\rv}\psi(x+\mu F(x)).
\end{equation*}
where $F\in C^2(\R^3,\R^3)$ satisfies $F(x)=x$ for $\lv x\rv\gg 1$. In the next sections, $F$ will be chosen to vanish in a neighbourhood of the interaction region, but this assumption is not needed for the analysis of the free operators. The first step is to derive an explicit expression for the distorted gradient for real values of the distortion parameter and then extend this expression to complex values. This is the object of the following proposition. It is standard \cite{Hu86} and mainly computational. For completeness its proof is given in the appendix. We denote $L:=\lV \nabla F\rV_{L^\infty}$ and for $\mu\in\C$
\begin{equation*}\label{Diffeo}
    f_\mu:x\in\R^3\mapsto x+\mu F(x)\in\C^3.
\end{equation*}

\begin{proposition}\label{DistortedDerivatives}
    Let $\mu\in(-1/L,1/L)$. The operator $U_\mu$ is unitary on $L^2$ and preserves $H^1$. Moreover
    \begin{equation}\label{DistortedDerivativesFormula}
        U_\mu \partial_{x_j}U_\mu^{-1}=\sum_{k=1}^3\lb \nabla f_\mu(x)^{-1}\rb_{kj}\lp \partial_{x_k}-\frac{1}{2}\tr(\nabla f_\mu(x)^{-1}\partial_{x_k}\nabla f_\mu(x))\rp.
    \end{equation}
\end{proposition}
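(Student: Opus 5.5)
The plan is to first show that for real $\mu$ with $|\mu|<1/L$ the map $f_\mu(x)=x+\mu F(x)$ is a $C^2$ diffeomorphism of $\R^3$ onto itself. The Jacobian is $\nabla f_\mu=I_3+\mu\nabla F$, and $\lV\mu\nabla F\rV_{L^\infty}\le|\mu|L<1$. Hence $\nabla f_\mu(x)$ is invertible everywhere, with $\lV\nabla f_\mu^{-1}\rV\le(1-|\mu|L)^{-1}$, and $\det\nabla f_\mu$ never vanishes. Since $\R^3$ is connected and the determinant equals $1$ at $\mu=0$, by continuity in $\mu$ it is in fact positive.

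Injectivity follows from the contraction estimate
\[
|f_\mu(x)-f_\mu(y)|\ge |x-y|-|\mu|L|x-y| .
\]
Surjectivity follows because solving $f_\mu(x)=y$ is a fixed point problem $x=y-\mu F(x)$ for a contraction. By the inverse function theorem, $f_\mu^{-1}$ is then $C^2$.

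Unitarity of $U_\mu$ is the change of variables $y=f_\mu(x)$:
\[
\int|\psi(f_\mu(x))|^2\det\nabla f_\mu(x)\,dx=\int|\psi(y)|^2dy .
\]
The inverse is explicit:
\[
U_\mu^{-1}\phi(y)=|\det\nabla f_\mu(f_\mu^{-1}(y))|^{-1/2}\phi(f_\mu^{-1}(y)) .
\]
So $U_\mu$ is an isometric bijection.

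Preservation of $H^1$ comes from the chain rule for $\psi\in C_c^\infty$, followed by density. The weight $J_\mu:=\det\nabla f_\mu$ is $C^1$, since $F\in C^2$. It is bounded above and below, because $\nabla F=I_3$ for $|x|$ large, so $J_\mu\equiv(1+\mu)^3$ there. Its gradient is bounded, since it is compactly supported. Hence $\lV U_\mu\psi\rV_{H^1}\le C\lV\psi\rV_{H^1}$, and the same holds for $U_\mu^{-1}$ by symmetry.

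For the formula, I would compute $U_\mu\partial_{x_j}U_\mu^{-1}$ on $\phi\in C_c^\infty$. Write $g=f_\mu^{-1}$ and $U_\mu^{-1}\phi=(J_\mu\circ g)^{-1/2}\,\phi\circ g$. Differentiating in $y_j$ gives two terms. The first is $\sum_k(\partial_k\phi)(g(y))\,\partial_jg_k(y)$. The second comes from differentiating the weight.

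Composing back with $f_\mu$ and multiplying by $J_\mu^{1/2}$ turns the first term into $\sum_k[\nabla f_\mu(x)^{-1}]_{kj}\partial_k\phi(x)$. This uses $\nabla g(f_\mu(x))=\nabla f_\mu(x)^{-1}$, keeping track of the index convention $[\nabla f]_{ik}=\partial_k f_i$.

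The weight term becomes
\[
-\tfrac12 J_\mu^{-1}\sum_k[\nabla f_\mu^{-1}]_{kj}\,\partial_kJ_\mu\,\phi ,
\]
again by the chain rule. Jacobi's formula then gives
\[
J_\mu^{-1}\partial_kJ_\mu=\tr\bigl(\nabla f_\mu^{-1}\partial_k\nabla f_\mu\bigr),
\]
which yields \eqref{DistortedDerivativesFormula}. The identity extends from $C_c^\infty$ to $H^1$ because both sides are bounded from $H^1$ to $L^2$.

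The main obstacle is mostly bookkeeping. One must get the transpose and index placement right in $[\nabla f_\mu^{-1}]_{kj}$, and check that both sides are equal as operators with domain $H^1$, not merely on test functions. I would handle the second point by the closure argument just described.
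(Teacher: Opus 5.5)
Your proposal is correct and follows essentially the same route as the paper. Both arguments obtain the diffeomorphism from the contraction $x\mapsto\mu F(x)$ and Banach--Picard, get unitarity by change of variables, and then compute with the chain rule and Jacobi's formula for the derivative of the determinant. The only differences are cosmetic: you differentiate the weight $(\det\nabla f_\mu)\circ f_\mu^{-1}$ directly, whereas the paper differentiates $\det\nabla f_\mu^{-1}$ and converts the trace term via the differentiated identity $\nabla f_\mu(f_\mu^{-1}(x))\nabla f_\mu^{-1}(x)=I_3$, and you make explicit the density step from $C_c^\infty$ to $H^1$ that the paper leaves implicit.
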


\noindent Let $\mu\in D(0,1/L):=\lcb z\in\C\mbox{, }\lv z\rv<1/L\rcb$. The r.h.s of \eqref{DistortedDerivativesFormula} still makes sense since $\nabla f_\mu(x)=I_3+\mu \nabla F(x)$ is invertible thanks to Neumann invertibility criterion. This motivates the following definition of the distorted gradient
\begin{equation*}\label{DistortedGradient}
    \left\{
    \begin{array}{ll}
        \nabla_\mu:=J_\mu(\nabla+a_\mu)\\
        
        [J_\mu(x)]_{jk}:=[\nabla f_\mu(x)^{-1}]_{kj}\\
        
        [a_\mu(x)]_k:=-\dfrac{1}{2}\tr(\nabla f_\mu(x)^{-1}\partial_{x_k}\nabla f_\mu(x)).
    \end{array}
    \right.
\end{equation*}

We define the distorted free Dirac operator by replacing $\nabla$ by $\nabla_\mu$ in \eqref{FreeDirac}
\begin{equation*}\label{DistortedFreeDirac}
    \Dvec_\mu:=\beta-i\alpha\cdot\nabla_\mu.
\end{equation*}
and now turn to its spectral analysis. In the case of dilations, i.e $F(x)=x$ for all $x\in\R^3$, then $\nabla_\mu=\frac{1}{1+\mu}\nabla$ and thanks to \cite{Se88} the spectrum of $\Dvec_\mu$ is
\begin{equation}\label{DistortedFreeSpectrum}
    \sigma_\mu:=\lcb z\in\C\mbox{, }(1+\mu)^2(z^2-1)\in[0,+\infty)\rcb.
\end{equation}
(see Figure \ref{FreeSpectralPicture}). 
\begin{figure}[htbp]
    \centering
    \begin{tikzpicture}[scale=1.5]
        \draw[->] (-3,0) -- (3,0) node[right] {$\Re z$};
        \draw[->] (0,-1.5) -- (0,1.5) node[above] {$\Im z$};
        
        \draw[very thick, blue] (-3,0) -- (-1,0);
        \draw[very thick, blue] (1,0) -- (3,0);
        
        \node[above, blue] at (2.2,0) {$\sigma_0$};
        
        \draw[thick,red,-]
          (1,0)
          .. controls (1,-0.8) and (1.8,-1.0)
          .. (3,-1.2);
        
        \draw[thick,red,-]
          (-1,0)
          .. controls (-1,0.8) and (-1.8,1.0)
          .. (-3,1.2);
        
        \node[red] at (2.4,-1.3) {$\sigma_\mu$};
        
        \fill (-1,0) circle (1.5pt);
        \fill (1,0) circle (1.5pt);
        
        \node[below] at (-1,0) {$-1$};
        \node[above] at (1,0) {$1$};
    \end{tikzpicture}
    \caption{Deformation of $\sigma_0$ as $\sigma_\mu$ when $\Im\mu>0$.}
    \label{FreeSpectralPicture}
\end{figure}
We go back to the case of distortions, i.e $F(x)=x$ only for $\lv x\rv\gg 1$. Thanks to \cite{Kh07} the essential spectrum of $\Dvec_\mu$ is $\sigma_\mu$ for $\lv\mu\rv\ll1$. The main goal of this section is to prove that $\Dvec_\mu$ has no spectrum in $\C\setminus\sigma_\mu$. We do not make use of the theory of analytic vectors but follow the approach given in \cite{DyZw19} in the case of Schrödinger operators. Precisely we construct an explicit candidate for $(\Dvec_\mu-z)^{-1}$ by analytic continuation of the kernel of $U_\mu(\Dvec-z)^{-1}U_\mu^{-1}$. In order to justify the resolvent construction carried out, the identification of the adjoint of $\Dvec_\mu$ will be needed. In contrast with the dilatated free Dirac operator, the distorted free Dirac operator involves variable complex coefficients, so both the expression and the domain of its adjoint require some care. Finally, although the applications considered in the literature only require small values of $\mu$, extending the constructions to the whole disk $D(0,1/L)$ requires relatively little additional work. This will be used later to get the even multiplicity result for a large class of resonances.

\subsection*{Adjoint of $\alpha\cdot\nabla_\mu$} The key ingredient in identifying $\Dom((\alpha\cdot\nabla_\mu)^*)$ is the following elliptic estimate. We provide an elementary proof which avoids the pseudo-differential machinery.

\begin{lemma}\label{Ellipticity}
    For all $\mu\in D(0,1/L)$ and $\psi\in H^1$
    \[\lV \nabla\psi\rV_{L^2}\lesssim\lV(\alpha\cdot\nabla_\mu)\psi\rV_{L^2}+\lV\psi\rV_{L^2}.\]
\end{lemma}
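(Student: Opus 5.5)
We plan to prove the estimate by writing $\alpha\cdot\nabla_\mu$ as a first-order operator with variable complex coefficients, $\alpha\cdot\nabla_\mu=\sum_{j,k}\alpha_j[J_\mu]_{jk}\partial_{x_k}+\alpha\cdot J_\mu a_\mu$, and using the Clifford relations to control its symbol from below.

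The zeroth-order part is harmless. Since $F\in C^2$ with $F(x)=x$ for $|x|\gg1$, both $\nabla F$ and $\partial\nabla F$ are bounded. The Neumann series gives $\|\nabla f_\mu^{-1}\|\le (1-|\mu|L)^{-1}$ uniformly in $x$, so $J_\mu$ and $a_\mu$ are bounded. Hence it suffices to show
\[\|\nabla\psi\|\lesssim\|A_\mu\psi\|+\|\psi\|,\qquad A_\mu:=\sum_k \big(\alpha\cdot c_k(x)\big)\partial_{x_k},\]
where $c_k(x):=J_\mu(x)e_k\in\C^3$ is the $k$-th column of $J_\mu$. Note that $J_\mu=(\nabla f_\mu^{-1})^{T}$.

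\textbf{Ellipticity of the symbol.} The symbol is $\alpha\cdot (J_\mu\xi)$ with $J_\mu\xi\in\C^3$. For $w=u+iv\in\C^3$ the Clifford relations $\alpha_j\alpha_k+\alpha_k\alpha_j=2\delta_{jk}$ give
\[(\alpha\cdot w)^*(\alpha\cdot w)=|u|^2+|v|^2+i\sum_{j,k}u_jv_k(\alpha_j\alpha_k-\alpha_k\alpha_j).\]
The cross term is a spin term and is not sign-definite, so pointwise invertibility needs an argument. We instead use the direct identity $(\alpha\cdot w)^2=w\cdot w=\sum_j w_j^2$, which shows that $\alpha\cdot w$ is invertible exactly when $w\cdot w\neq0$. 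With $w=J_\mu\xi=(I+\mu\nabla F^T)^{-1}\xi$, write $\xi=(I+\mu\nabla F^T)w=w+\mu G w$ with $G$ real and $\|G\|\le L$. Suppose $w\cdot w=0$, i.e. $|u|=|v|$ and $u\perp v$. Then $\operatorname{Im}\xi=0$ forces $v+\Re\mu\, Gv+\Im\mu\, Gu=0$, hence $|v|\le |\mu|L\,|u|<|u|$, which is a contradiction. This argument only shows nondegeneracy, so we refine it to the quantitative bound
\[|w\cdot w|\ge c_\mu|\xi|^2,\qquad c_\mu>0 \text{ uniform in } x.\]
To get it, note that $|v|\le|\mu|L|u|$ follows from the same relation for all real $\xi$. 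Then $\Re(w\cdot w)=|u|^2-|v|^2\ge(1-|\mu|^2L^2)|u|^2$ and $|w|\gtrsim|\xi|$.

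\textbf{From symbol to estimate without pseudodifferential calculus.} The plan is to square the operator: $A_\mu^2\psi=\sum_{k,l}(c_k\cdot c_l)\partial_k\partial_l\psi+(\text{first order})$. By the bound above, the principal part $\sum_{k,l} (c_k\cdot c_l)\xi_k\xi_l=(J_\mu\xi)\cdot(J_\mu\xi)$ is a scalar complex second-order operator with $\Re$-part uniformly elliptic. However, squaring requires $\psi\in H^2$, and we only have $H^1$. So we argue directly at first order instead.

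For constant coefficients, Plancherel and $|\alpha\cdot w|\ge|w\cdot w|^{1/2}\cdot$ (bounded inverse norm) give $\|\nabla\psi\|\lesssim\|A_{\mu,x_0}\psi\|$. Indeed, $(\alpha\cdot w)^{-1}=(\alpha\cdot w)/(w\cdot w)$, so $|(\alpha\cdot w)^{-1}|\le |w|/(c_\mu|\xi|^2)\lesssim|\xi|^{-1}$.

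For variable coefficients, we freeze them. Take a partition of unity $\sum\chi_m^2=1$ with balls of radius $\delta$ and bounded overlap, and apply the frozen estimate to $\chi_m\psi$. The error consists of commutator terms $[A_\mu,\chi_m]\psi$, which are $O(\delta^{-1}\|\psi\|)$, and coefficient oscillation terms $\|c_k-c_k(x_m)\|_{L^\infty(B_m)}\|\nabla(\chi_m\psi)\|$. The coefficients $J_\mu$ are uniformly continuous, since $\nabla F$ is $C^1$ and constant outside a compact set. Choosing $\delta$ small therefore makes the oscillation terms absorbable into the left-hand side after summing over $m$.

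\textbf{Main obstacle.} The main obstacle is the quantitative symbol lower bound uniformly on the whole disk $|\mu|<1/L$, rather than for small $\mu$. The bound $|v|\le|\mu|L|u|$ with $|\mu|L<1$ is exactly what should deliver it, and it is the step to check carefully. In particular, we must verify that the real and imaginary parts of the matrix combine as claimed when $\mu$ is complex, writing $\xi$ real as $(I+\mu G)w$.
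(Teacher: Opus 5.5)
Your overall architecture is sound: a Plancherel estimate for frozen coefficients based on $(\alpha\cdot w)^2=w\cdot w$, followed by a partition of unity. The gap is the quantitative symbol bound, which you rightly flagged as the crux; it rests on a false inequality. The claim that $|v|\le|\mu|L|u|$ holds for every real $\xi$ is wrong. Once you drop $u\perp v$ and $|u|=|v|$, which you used in the nondegeneracy step, the relation $v=-G(\Re\mu\,v+\Im\mu\,u)$ only gives $|v|\le \frac{L|\Im\mu|}{1-L|\Re\mu|}\,|u|$, and this bound is attained. For instance, let $\xi$ be a real eigenvector of $G$ with eigenvalue $-L$ (the hypotheses on $F$ allow this at some point). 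Then $w=\xi/(1-\mu L)$, and for $\mu=(0.9+0.2i)/L\in D(0,1/L)$ you get $w=(2+4i)\xi$. So $|v|=2|u|$ and $w\cdot w=(-12+16i)|\xi|^2$. Hence $\Re(w\cdot w)\ge(1-|\mu|^2L^2)|u|^2$ is false, and no bound $\Re(w\cdot w)\gtrsim|\xi|^2$ holds on the whole disk; it holds only on smaller regions such as $L(|\Re\mu|+|\Im\mu|)<1$. The same incorrect claim of uniform ellipticity of the real part appears in your squaring paragraph.

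What you actually need, $|w\cdot w|\ge c_\mu|\xi|^2$ uniformly in $x$, is true, and your own (correct) nondegeneracy argument gives it by compactness. For fixed $\mu$, the map $(G,\xi)\mapsto w\cdot w$ with $w=(I+\mu G)^{-1}\xi$ is continuous and nonvanishing on the compact set $\{G\in\R^{3\times3}:\|G\|\le L\}\times S^2$. Its modulus is therefore bounded below there, and homogeneity in $\xi$ does the rest. Alternatively, rotate before taking real parts: for real $\xi$ one has $(J_{\overline\mu}\xi)^*J_\mu\xi=w\cdot w$, and the paper's estimate \eqref{ACoercivity} is exactly $\Re(w\cdot w)+\frac{\Re\mu}{\Im\mu}\Im(w\cdot w)\ge c|\xi|^2$ when $\Im\mu\ne0$. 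With either repair, your Plancherel bound and the freezing argument go through.

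For comparison, the paper avoids localization and the Fourier transform entirely. It realizes your identity $(\alpha\cdot w)^2=w\cdot w$ at the operator level by pairing $(\alpha\cdot\nabla_\mu)\psi$ with $(\alpha\cdot\nabla_{\overline\mu})\psi$, whose symbol is $i\alpha\cdot\overline w$. The identity $\la(\alpha\cdot\nabla_{\overline\mu})\psi,(\alpha\cdot\nabla_\mu)\psi\ra=\la\nabla_{\overline\mu}\psi,\nabla_\mu\psi\ra$ is proved by unitarity of $U_\mu$ for real $\mu$ and analytic continuation. The rotated coercivity, needed there for complex $\xi$, then gives a G{\aa}rding inequality, and Young's inequality plus absorption conclude. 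That route uses only boundedness of $J_\mu$ and $a_\mu$. Yours additionally needs uniform continuity of $\nabla F$ (which holds), but it is the more standard localization argument.
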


\begin{proof}
    For real $\mu$, since $U_\mu$ is unitary we get
    \begin{align*}
        \lV(\alpha\cdot\nabla_\mu)\psi\rV_{L^2}^2&=\lV(\alpha\cdot\nabla)U_\mu^{-1}\psi\rV_{L^2}^2\\
        &=\lV\nabla U_\mu^{-1}\psi\rV_{L^2}^2\\
        &=\lV\nabla_\mu\psi\rV_{L^2}^2.
    \end{align*}
    We conclude that for complex $\mu$
    \begin{equation}\label{alphaRemoved}
        \la(\alpha\cdot\nabla_{\overline{\mu}})\psi,(\alpha\cdot\nabla_\mu)\psi\ra_{L^2}=\la \nabla_{\overline{\mu}}\psi,\nabla_\mu\psi\ra_{L^2}.
    \end{equation}
    Indeed, the $L^2$-inner product being anti-linear in its first variable, linear in its second variable, $\overline{J_\mu}=J_{\overline{\mu}}$ and $\overline{a_\mu}=a_{\overline{\mu}}$ then both sides of \eqref{alphaRemoved} are analytic functions of $\mu$. Hence \eqref{alphaRemoved}, which holds for real $\mu$, extends to complex $\mu\in D(0,1/L)$ by analytic continuation. 
    
    The proof now relies on the following estimate, whose proof is postponed until the end of the present argument. There exists $c>0$ such that for all $x\in\R^3$ and $\xi\in\C^3$ 
    \begin{equation}\label{ACoercivity}
        \Re(J_{\overline{\mu}}(x)\xi)^*J_\mu(x)\xi+\frac{\Re\mu}{\Im\mu}\Im (J_{\overline{\mu}}(x)\xi)^* J_\mu(x)\xi\geq c\lv\xi\rv^2.
    \end{equation} 
    With $\psi=(\psi_1,\psi_2,\psi_3,\psi_4)$ we have 
    \begin{align*}
        &\la \nabla_{\overline{\mu}}\psi,\nabla_\mu\psi\ra_{L^2}\\
        &=\sum_{i=1}^4\la J_{\overline{\mu}}\nabla\psi_i,J_\mu \nabla\psi_i\ra_{L^2}+\la J_{\overline{\mu}}\nabla\psi_i,\psi_i J_\mu a_\mu\ra_{L^2}+\la\psi_iA_{\overline{\mu}} a_{\overline{\mu}},J_\mu \nabla\psi_i\ra_{L^2}+\la\psi_i J_{\overline{\mu}} a_{\overline{\mu}},\psi_i J_\mu a_\mu\ra_{L^2}.
    \end{align*}
    Using \eqref{alphaRemoved}, the latter expansion, \eqref{ACoercivity} and Cauchy-Schwarz's inequality we get for some $C>0$ independent of $\psi$
    \[\lV (\alpha\cdot\nabla_{\overline{\mu}})\psi\rV_{L^2}\lV (\alpha\cdot\nabla_\mu)\psi\rV_{L^2}\geq\sum_{i=1}^4c\lV\nabla\psi_i\rV_{L^2}^2-C\lV \nabla\psi_i\rV_{L^2}\lV\psi_i\rV_{L^2}-C\lV\psi_i\rV_{L^2}^2.\]
    Thanks to Young's inequality 
    \[C\lV \nabla\psi_i\rV_{L^2}\lV\psi_i\rV_{L^2}\leq \frac{c}{2}\lV \nabla\psi_i\rV_{L^2}^2+\frac{C^2}{2c}\lV\psi_i\rV_{L^2}^2\]
    therefore 
    \[\lV\nabla\psi\rV_{L^2}^2\lesssim\lV (\alpha\cdot\nabla_{\overline{\mu}})\psi\rV_{L^2}\lV (\alpha\cdot\nabla_\mu)\psi\rV_{L^2}+\lV\psi\rV_{L^2}^2.\]
    Eventually $\alpha\cdot\nabla_{\overline{\mu}}$ is bounded from $H^1$ to $L^2$ therefore the result is proved by absorption thanks to Young's inequality. 
    
    It remains to prove \eqref{ACoercivity}. The identities
    \begin{equation*}
        \left\{
        \begin{array}{ll}
            \Re(\nabla f_{\overline{\mu}}(x)\xi)^*\nabla f_\mu(x)\xi=\lv\xi\rv^2+2\Re\mu\Re\xi^*\nabla F(x)\xi+\lp(\Re\mu)^2-(\Im\mu)^2\rp\lv \nabla F(x)\xi\rv^2\\
            
            \Im(\nabla f_{\overline{\mu}}(x)\xi)^*\nabla f_\mu(x)\xi=2\Im\mu\Re\xi^*\nabla F(x)\xi+2\Re\mu\Im\mu\lv \nabla F(x)\xi\rv^2
        \end{array}
        \right.
    \end{equation*}
    provide
    \begin{align*}
        \Re(\nabla f_{\overline{\mu}}(x)\xi)^*\nabla f_\mu(x)\xi-\frac{\Re\mu}{\Im\mu}\Im(\nabla f_{\overline{\mu}}(x)\xi)^*\nabla f_\mu(x)\xi&=\lv\xi\rv^2-\lv\mu\rv^2\lv \nabla F(x)\xi\rv^2\\
        &\geq\lp1-\lv\mu\rv^2L^2\rp\lv\xi\rv^2.
    \end{align*}
    Changing $\xi$ by $\lp \nabla f_\mu(x)^\Tsf \nabla f_\mu(x)\rp^{-1}\xi$ and $\mu$ by $\overline{\mu}$ we deduce
    \begin{align*}
        \Re(J_{\overline{\mu}}(x)\xi)^*J_\mu(x)\xi+\frac{\Re\mu}{\Im\mu}\Im(J_{\overline{\mu}}(x)\xi)^*J_\mu(x)\xi&\geq\lp1-\lv\mu\rv^2L^2\rp\lv\lp \nabla f_\mu(x)^\Tsf \nabla f_\mu(x)\rp^{-1}\xi\rv^2\\
        &\geq\frac{1-\lv\mu\rv^2L^2}{\lV \nabla f_\mu(x)^\Tsf \nabla f_\mu(x)\rV^2}\lv\xi\rv^2.
    \end{align*}
    and \eqref{ACoercivity} follows since $\lV \nabla f_\mu(x)\rV\lesssim1$.
\end{proof}

We also need the following short computation.

\begin{lemma}
    Let $\chi\in C^1_c$ be real-valued and $\psi\in\Dom((\alpha\cdot\nabla_\mu)^*)$
    \begin{equation}\label{IPP}
        \int\chi(\alpha\cdot\nabla_\mu)^*\psi=\int\alpha\cdot(\nabla_{\overline{\mu}}\chi)\psi.
    \end{equation}
\end{lemma}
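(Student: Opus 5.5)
The identity \eqref{IPP} is read componentwise in $\C^4$. Here $\nabla_{\overline{\mu}}\chi=J_{\overline{\mu}}(\nabla\chi+a_{\overline{\mu}}\chi)$ is a $\C^3$-valued function, and $\alpha\cdot(\nabla_{\overline{\mu}}\chi)$ is a $4\times4$ matrix-valued function acting on $\psi$. The plan is to test the definition of the adjoint against the functions $\varphi_j:=\chi e_j$, where $(e_j)_{1\le j\le4}$ is the canonical basis of $\C^4$, and then take complex conjugates.

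\emph{Step 1: admissible test functions.} The operator $\alpha\cdot\nabla_\mu$ is considered on the domain $H^1$, which is dense in $L^2$, so its adjoint is well defined. Since $\chi\in C^1_c$, each $\varphi_j$ belongs to $H^1$. Moreover $J_\mu$ and $a_\mu$ are bounded and continuous, because $F\in C^2$ and $\nabla f_\mu(x)$ is uniformly invertible for $\mu\in D(0,1/L)$. Hence
\[(\alpha\cdot\nabla_\mu)\varphi_j=\alpha\cdot(\nabla_\mu\chi)\,e_j\]
is bounded and compactly supported. All integrals involved are therefore absolutely convergent, because $\psi$ and $(\alpha\cdot\nabla_\mu)^*\psi$ lie in $L^2$ and are integrated against compactly supported bounded functions.

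\emph{Step 2: apply the adjoint relation.} For $\psi\in\Dom((\alpha\cdot\nabla_\mu)^*)$, the definition of the adjoint gives
\[\int\chi\, e_j^*\,\overline{(\alpha\cdot\nabla_\mu)^*\psi}\;=\;\la(\alpha\cdot\nabla_\mu)^*\psi,\varphi_j\ra_{L^2}\;=\;\la\psi,(\alpha\cdot\nabla_\mu)\varphi_j\ra_{L^2}\;=\;\int\psi^*\,\alpha\cdot(\nabla_\mu\chi)\,e_j .\]
We then take complex conjugates of both sides, using three facts: $\chi$ is real-valued, the matrices $\alpha_k$ are Hermitian, and $\overline{J_\mu}=J_{\overline{\mu}}$, $\overline{a_\mu}=a_{\overline{\mu}}$ (as already used in the proof of Lemma \ref{Ellipticity}). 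Together these give $\overline{\nabla_\mu\chi}=\nabla_{\overline{\mu}}\chi$. The conjugated identity reads
\[e_j^*\int\chi\,(\alpha\cdot\nabla_\mu)^*\psi=e_j^*\int\alpha\cdot(\nabla_{\overline{\mu}}\chi)\,\psi .\]
Since this holds for every $j$, \eqref{IPP} follows.

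\emph{Expected difficulty.} There is no real obstacle here. The only points needing care are the bookkeeping of complex conjugation, namely that the $L^2$ inner product is antilinear in the first slot and that the conjugation $\mu\mapsto\overline{\mu}$ commutes correctly with $J_\mu$ and $a_\mu$. One must also check that $\varphi_j$ lies in the domain on which $\alpha\cdot\nabla_\mu$ is defined, so that no regularity of $\psi$ beyond $\psi\in\Dom((\alpha\cdot\nabla_\mu)^*)$ is used.
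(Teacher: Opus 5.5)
Your proof is correct and is essentially the paper's argument: you test the adjoint relation against $\chi e_j$ and read off the $j$-th component, using that the $\alpha_k$ are Hermitian, that $\overline{J_\mu}=J_{\overline{\mu}}$ and $\overline{a_\mu}=a_{\overline{\mu}}$, and that $\chi$ is real. The only difference is cosmetic: you pair in the order $\langle(\alpha\cdot\nabla_\mu)^*\psi,\chi e_j\rangle=\langle\psi,(\alpha\cdot\nabla_\mu)(\chi e_j)\rangle$ and then conjugate, whereas the paper writes $\langle\chi e_i,(\alpha\cdot\nabla_\mu)^*\psi\rangle=\langle(\alpha\cdot\nabla_\mu)(\chi e_i),\psi\rangle$ directly.
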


\begin{proof} Let $e_i$ be the $i-$th canonical vector of $\C^4$. The equality
    \begin{align*}
        \la\chi e_i,(\alpha\cdot\nabla_\mu)^*\psi\ra_{L^2}&=\la (\alpha\cdot\nabla_\mu)(\chi e_i),\psi\ra_{L^2}\\
        &=\la\alpha\cdot(\nabla_\mu\chi)e_i,\psi\ra_{L^2}
    \end{align*}
    provides the equality of the $i-$th components of each side of \eqref{IPP}.
\end{proof}

We are finally able to compute the adjoint of $\alpha\cdot\nabla_\mu$.

\begin{proposition}\label{FreeDistortedAdjoint}
    With domains $H^1$ one has $(\alpha\cdot\nabla_\mu)^*=-\alpha\cdot\nabla_{\overline{\mu}}$.
\end{proposition}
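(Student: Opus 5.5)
We will prove the two inclusions $-\alpha\cdot\nabla_{\overline{\mu}}\subset(\alpha\cdot\nabla_\mu)^*$ and $\Dom((\alpha\cdot\nabla_\mu)^*)\subset H^1$.

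\emph{First inclusion.} For real $\mu$, Proposition~\ref{DistortedDerivatives} gives $\alpha\cdot\nabla_\mu=U_\mu(\alpha\cdot\nabla)U_\mu^{-1}$. Since $U_\mu$ is unitary and $(\alpha\cdot\nabla)^*=-\alpha\cdot\nabla$ on $H^1$, we get for $\phi,\psi\in H^1$
\[\la(\alpha\cdot\nabla_\mu)\phi,\psi\ra_{L^2}=-\la\phi,(\alpha\cdot\nabla_\mu)\psi\ra_{L^2}.\]
We then replace $\mu$ in the second slot by $\overline{\mu}$ and use that $\overline{J_\mu}=J_{\overline{\mu}}$ and $\overline{a_\mu}=a_{\overline{\mu}}$, as in the proof of Lemma~\ref{Ellipticity}. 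This makes
\[\mu\mapsto\la(\alpha\cdot\nabla_{\overline{\mu}})\phi,\psi\ra_{L^2}+\la\phi,(\alpha\cdot\nabla_\mu)\psi\ra_{L^2}\]
antiholomorphic in $\mu$; equivalently, the pairing $\la(\alpha\cdot\nabla_\mu)\phi,\psi\ra+\la\phi,(\alpha\cdot\nabla_{\overline{\mu}})\psi\ra$ is conjugate-analytic. It vanishes on $(-1/L,1/L)$, so it vanishes on $D(0,1/L)$. Analyticity holds because $J_\mu$ and $a_\mu$ are bounded and depend analytically on $\mu$ (Neumann series, using $F\in C^2$ and $\nabla F$ constant at infinity). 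Hence $H^1\subset\Dom((\alpha\cdot\nabla_\mu)^*)$ and $(\alpha\cdot\nabla_\mu)^*=-\alpha\cdot\nabla_{\overline{\mu}}$ on $H^1$.

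\emph{Second inclusion (main obstacle): regularity.} Let $\psi\in\Dom((\alpha\cdot\nabla_\mu)^*)$ and set $g:=(\alpha\cdot\nabla_\mu)^*\psi\in L^2$. In the sense of distributions, $-\alpha\cdot\nabla_{\overline{\mu}}\psi=g$. This is what \eqref{IPP} encodes when tested componentwise, after using that $J$ is $C^1$ to move derivatives. To upgrade this to $\psi\in H^1$, we will use Friedrichs mollifiers. Set $\psi_\varepsilon:=\rho_\varepsilon*\psi\in H^1$. Then
\[-\alpha\cdot\nabla_{\overline{\mu}}\psi_\varepsilon=\rho_\varepsilon*g+[\rho_\varepsilon*,\alpha\cdot\nabla_{\overline{\mu}}]\psi.\]
The commutator term is bounded in $L^2$ uniformly in $\varepsilon$ by the classical Friedrichs lemma, since $J_{\overline{\mu}}\in W^{1,\infty}$ (it is $C^1$ with bounded derivatives because $F\in C^2$ and $\nabla F$ is constant for $|x|\gg1$) and $a_{\overline{\mu}}\in L^\infty$.

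Lemma~\ref{Ellipticity}, applied with $\overline{\mu}\in D(0,1/L)$, then gives
\[\lV\nabla\psi_\varepsilon\rV_{L^2}\lesssim\lV g\rV_{L^2}+\lV\psi\rV_{L^2},\]
uniformly in $\varepsilon$. Therefore $\psi_\varepsilon$ is bounded in $H^1$. A weak limit gives $\psi\in H^1$, and the identity $(\alpha\cdot\nabla_\mu)^*\psi=-\alpha\cdot\nabla_{\overline{\mu}}\psi$ follows from the first inclusion.

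The delicate point is the Friedrichs commutator estimate with variable complex coefficients. If one prefers to avoid it, an alternative is to use a cutoff argument. Where $F(x)=x$ (say $|x|>R$), the operator $\nabla_\mu$ has constant coefficients $\frac{1}{1+\mu}\nabla$, so regularity there follows by Fourier analysis. The compact region $|x|\le R$ still requires local mollification, but the commutator there only involves $C^1$ coefficients on a compact set, which is standard. Either way, the key input is the uniform elliptic bound of Lemma~\ref{Ellipticity}, which is why it was established beforehand.
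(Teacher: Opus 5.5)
Your proof is correct and follows the paper's argument. You get $H^1\subset\Dom((\alpha\cdot\nabla_\mu)^*)$ by analytic continuation in $\mu$ from the real case, then run a Friedrichs mollifier argument: the commutator bound, which the paper checks by hand from the Lipschitz bound on $J_{\overline{\mu}}$ and Young's convolution inequality, combined with Lemma \ref{Ellipticity}, shows $\rho_\varepsilon*\psi$ is bounded in $H^1$. One small slip: $\mu\mapsto\la(\alpha\cdot\nabla_{\overline{\mu}})\phi,\psi\ra+\la\phi,(\alpha\cdot\nabla_\mu)\psi\ra$ is holomorphic, not antiholomorphic, because the antilinear slot undoes the conjugation; this is harmless, since either way vanishing on $(-1/L,1/L)$ forces vanishing on $D(0,1/L)$.
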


\begin{proof}
    Let $\psi,\phi\in H^1$. For real $\mu$
    \[\la\phi,(\alpha\cdot\nabla_\mu)\psi\ra_{L^2}=\la(-\alpha\cdot\nabla_\mu)\phi,\psi\ra_{L^2}.\]
    By analytic continuation we get for complex $\mu$
    \[\la\phi,(\alpha\cdot\nabla_\mu)\psi\ra_{L^2}=\la(-\alpha\cdot\nabla_{\overline{\mu}})\phi,\psi\ra_{L^2}.\]
    This shows that $H^1\subset\Dom((\alpha\cdot\nabla_\mu)^*)$ and $(\alpha\cdot\nabla_\mu)^*=-\alpha\cdot\nabla_{\overline{\mu}}$ on $H^1$. 
    
    Now let $\psi\in\Dom((\alpha\cdot\nabla_\mu)^*)$. It remains to prove that $\psi\in H^1$. We follow the strategy of Friedrich's Lemma (see \cite[Lemma 17.1.5]{Ho03}). We consider a mollifier $\chi_\varepsilon:=\frac{1}{\varepsilon^3}\chi\lp\frac{\cdot}{\varepsilon}\rp$. First
    \[(\alpha\cdot\nabla_{\overline{\mu}})(\chi_\varepsilon*\psi)(x)=\int\alpha\cdot[ J_{\overline{\mu}}(x)(\nabla\chi_\epsilon(x-y)+\chi_\varepsilon(x-y)a_{\overline{\mu}}(x))]\psi(y)dy\]
    moreover thanks to \eqref{IPP}
    \begin{align*}
        \chi_\varepsilon*(\alpha\cdot\nabla_\mu)^*\psi(x)&=\int\chi_\varepsilon(x-y)(\alpha\cdot\nabla_\mu)^*\psi(y)dy\\
        &=\int\alpha\cdot[ J_{\overline{\mu}}(y)(-\nabla\chi_\epsilon(x-y)+\chi_\varepsilon(x-y)a_{\overline{\mu}}(y))]\psi(y)dy
    \end{align*}
    therefore 
    \begin{align*}
        &(\alpha\cdot\nabla_{\overline{\mu}})(\chi_\varepsilon*\psi)(x)+\chi_\varepsilon*(\alpha\cdot\nabla_\mu)^*\psi(x)\\
        &=\int\alpha\cdot[(J_{\overline{\mu}}(x)-J_{\overline{\mu}}(y))\nabla\chi_\epsilon(x-y)+\chi_\varepsilon(x-y)(J_{\overline{\mu}}(x)a_{\overline{\mu}}(x)+J_{\overline{\mu}}(y)a_{\overline{\mu}}(y))]\psi(y)dy.
    \end{align*}
    Since for all $x,y\in\R^3$
    \[\lV J_{\overline{\mu}}(x)-J_{\overline{\mu}}(y)\rV\lesssim\lv x-y\rv \mbox{ and } \lV J_{\overline{\mu}}(x)a_{\overline{\mu}}(x)+J_{\overline{\mu}}(y)a_{\overline{\mu}}(y)\rV\lesssim 1\]
    we obtain
    \begin{align*}
        \lV(\alpha\cdot\nabla_{\overline{\mu}})(\chi_\varepsilon*\psi)+\chi_\varepsilon*(\alpha\cdot\nabla_\mu)^*\psi\rV_{L^2}&\lesssim \lV(\lv x\rv\lv \nabla\chi_\varepsilon\rv)*\lv\psi\rv\rV_{L^2}+\lV\chi_\varepsilon*\lv\psi\rv\rV_{L^2}\\
        &\lesssim(\lV\lv x\rv\lv \nabla\chi_\varepsilon\rv\rV_{L^1}+\lV\chi_\varepsilon\rV_{L^1})\lV\psi\rV_{L^2}\\
        &\lesssim(\lV\lv x\rv\lv \nabla\chi\rv\rV_{L^1}+\lV\chi\rV_{L^1})\lV\psi\rV_{L^2}
    \end{align*}
    where we used Young's estimate for the convolution to get the second estimate and the change of variable $y=x/\varepsilon$ to get the third one. Combining this with Lemma \ref{Ellipticity} we conclude that $(\chi_\varepsilon*\psi)_{\varepsilon>0}$ is bounded in $H^1$. Hence $(\chi_\varepsilon*\psi)_{\varepsilon>0}$ weakly converges in $H^1$ as $\varepsilon\rightarrow0$ (up to an extraction). Since $(\chi_\varepsilon*\psi)_{\varepsilon>0}$ converges to $\psi$ in $L^2$, we get $\psi\in H^1$. 
\end{proof}

\subsection*{Resolvent of $\Dvec_\mu$} For real $\mu$, $\Dvec_\mu$ and $\Dvec$ are conjugated by $U_\mu$ hence for $z\not\in\sigma_0$
\begin{equation}\label{ConjugateFreeResolvent}
    (\Dvec_\mu-z)^{-1}=U_\mu(\Dvec -z)^{-1}U_\mu^{-1}.
\end{equation}
Let us recall that $(\Dvec -z)^{-1}$ is the convolution operator with kernel
\begin{equation}\label{FreeKernel}
    K_z:x\mapsto\frac{e^{ i(z^2-1)^{1/2}\lv x\rv}}{4\pi\lv x\rv}\lp zI_4+\beta+\lp i+(z^2-1)^{1/2}\lv x\rv\rp\alpha\cdot\frac{ x}{\lv x\rv^2}\rp.
\end{equation}
where $\Im s^{1/2}>0$ if $s\in\C\setminus[0,+\infty)$. Using \eqref{ConjugateFreeResolvent}, \eqref{FreeKernel} and the change of variable formula we conclude that $(\Dvec_\mu-z)^{-1}$ is the integral operator with kernel 
\begin{equation}\label{FreeDistortedKernel}
    (x,y)\mapsto\sqrt{\det \nabla f_\mu(x)}\sqrt{\det \nabla f_\mu(y)}K_z\lp f_\mu(x)- f_\mu(y)\rp.
\end{equation}
The main task is therefore to extend the kernel \eqref{FreeDistortedKernel} to $\mu\in D(0,1/L)$ and $z\in\C\setminus\sigma_\mu$ (see \eqref{DistortedFreeSpectrum} for the definition of $\sigma_\mu$). The problem reduces to the terms $(z^2-1)^{1/2}$, $\sqrt{\det\nabla f_\mu(x)}$ and $\lv f_\mu(x)-f_\mu(y)\rv$.

Let $\mu\in D(0,1/L)$. We introduce the branch of the square root of $z^2-1$ naturally defined for $z\not\in\sigma_\mu$
\begin{equation}\label{RotatedBranch}
    \omega_\mu(z):=\frac{\lp(1+\mu)^2(z^2-1)\rp^{1/2}}{1+\mu}.
\end{equation}
We also set for $x\in\R^3$
\begin{equation}\label{JacobianContinuation}
    \rho_\mu(x):=\exp\lp\frac{1}{2}\tr\sum_{n=1}^{+\infty}\frac{(-1)^{n-1}(\mu \nabla F(x))^n}{n}\rp
\end{equation}
and for $y\not=x$
\begin{equation}\label{DistanceContinuation}
    \delta_\mu(x,y):=\sqrt{(f_\mu(x)- f_\mu(y))^\Tsf(f_\mu(x)- f_\mu(y))}
\end{equation}
where $\sqrt{\cdot}:\C\setminus(-\infty,0]\rightarrow\lcb\Re>0\rcb$. For real $\mu$ it is clear that $\omega_\mu(z)=(z^2-1)^{1/2}$, $\rho_\mu(x)=\sqrt{\det \nabla f_\mu(x)}$ and $\delta_\mu(x,y)=\lv f_\mu(x)- f_\mu(y)\rv$.
\begin{remark}\label{SquareRootDefinition}
    The square root in \eqref{DistanceContinuation} is well defined when $\mu$ is not real. Indeed if
    \[(f_\mu(x)-f_\mu(y))^\Tsf(f_\mu(x)- f_\mu(y))=\lv x-y\rv^2+2\mu(x-y)^\Tsf(F(x)-F(y))+\mu^2\lv F(x)-F(y)\rv^2\]
    is non-positive then, taking real and imaginary part, we would get
    \begin{equation*}
        \left\{
        \begin{array}{ll}
            \lv x-y\rv^2+2\Re\mu(x-y)^\Tsf(F(x)-F(y))+\lp(\Re\mu)^2-(\Im\mu)^2\rp\lv F(x)-F(y)\rv^2\leq 0\\
            
            (x-y)^\Tsf(F(x)-F(y))+\Re\mu\lv F(x)-F(y)\rv^2=0
        \end{array}
        \right.
    \end{equation*}
    which yields by combination
    \[\lv x-y\rv^2-\lv\mu\rv^2\lv F(x)-F(y)\rv^2\leq0\]
    and the contradiction since $F$ is $L-$Lipschitz and $\lv\mu\rv<1/L$. 
\end{remark}

The quantities $\omega_\mu$, $\rho_\mu$ and $\delta_\mu$ naturally lead to a candidate for the resolvent of $\Dvec_\mu$ outside $\sigma_\mu$. The following lemma shows that this candidate is indeed well defined and has the analyticity properties needed to justify this identification. Its proof is postponed to the appendix.

\begin{lemma}\label{TAnalyticity}
    For all $\mu\in D(0,1/L)$ and $z\not\in\sigma_\mu$ the integral operator $T_\mu^z$ with kernel 
    \[K_\mu^z:(x,y)\mapsto \rho_\mu(x)\rho_\mu(y)\frac{e^{ i\omega_\mu(z) \delta_\mu(x,y)}}{4\pi\delta_\mu(x,y)}\lp zI_4+\beta+\lp i+\omega_\mu(z) \delta_\mu(x,y)\rp\alpha\cdot\frac{ f_\mu(x)- f_\mu(y)}{\delta_\mu(x,y)^2}\rp\] 
    is well defined and bounded from $ L^2$ into itself. Moreover
    \begin{enumerate}
        \item for all $z\in(-1,1)$, $\mu\in D(0,1/L)\mapsto T_\mu^z\in\Lcal(L^2)$ is analytic,
        \item for all $\mu\in D(0,1/L)$, $z\in\C\setminus\sigma_\mu\mapsto T_\mu^z\in\Lcal(L^2)$ is analytic.
    \end{enumerate}
\end{lemma}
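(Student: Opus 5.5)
We will prove boundedness via pointwise bounds on the kernel $K_\mu^z$ and Schur's test, and analyticity via Morera/differentiation under the integral together with locally uniform bounds.

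\textbf{Step 1: pointwise control of $\delta_\mu$, $\rho_\mu$, $\omega_\mu$.} By Remark~\ref{SquareRootDefinition}, $\delta_\mu(x,y)^2$ avoids $(-\infty,0]$, so $\delta_\mu$ is well defined, analytic in $\mu$ and continuous in $(x,y)$. Quantitatively, the same computation gives the lower bound
\[
\lv\delta_\mu(x,y)^2\rv\gtrsim (1-\lv\mu\rv^2L^2)\lv x-y\rv^2 .
\]
Indeed, combining the real part with $\Re\mu/\Im\mu$ times the imaginary part yields $\lv x-y\rv^2-\lv\mu\rv^2\lv F(x)-F(y)\rv^2$. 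The upper bound $\lv\delta_\mu\rv\lesssim\lv x-y\rv$ follows from Lipschitz continuity, so $\lv\delta_\mu\rv\sim\lv x-y\rv$ locally uniformly in $\mu$. For $\rho_\mu$, the series converges since $\lv\mu\rv\lV\nabla F\rV<1$, it is bounded and analytic in $\mu$, and $\rho_\mu^2=\det\nabla f_\mu$. The factor $\lv f_\mu(x)-f_\mu(y)\rv/\lv\delta_\mu\rv^2\lesssim\lv x-y\rv^{-1}$. Hence the singular part of the kernel is bounded by $C(\lv x-y\rv^{-2}+\lv x-y\rv^{-1})\lv e^{i\omega_\mu(z)\delta_\mu}\rv$.

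\textbf{Step 2: exponential decay (the main obstacle).} We need $\Im(\omega_\mu(z)\delta_\mu(x,y))\geq c\lv x-y\rv$ with $c>0$ locally uniform in $(\mu,z)$ for $z\notin\sigma_\mu$. Since $F(x)=x$ for $\lv x\rv\geq R$, we have $f_\mu(x)-f_\mu(y)=(1+\mu)(x-y)+\mu\,r(x,y)$ with $r$ bounded by $2\sup_{B_R}\lv F-\mathrm{id}\rv$, which is $O(1)$. Therefore
\[
\delta_\mu=(1+\mu)\lv x-y\rv+O(1)
\]
for large $\lv x-y\rv$, choosing the branch consistently, which Remark~\ref{SquareRootDefinition} ensures via $\Re\delta_\mu>0$ and continuity. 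Then $\omega_\mu(z)\delta_\mu=((1+\mu)^2(z^2-1))^{1/2}\lv x-y\rv+O(1)$. The imaginary part of the leading term is $\gtrsim\lv x-y\rv$ precisely because $(1+\mu)^2(z^2-1)\notin[0,\infty)$. For bounded $\lv x-y\rv$ the exponential factor is simply bounded. The delicate point is justifying the branch identification $\delta_\mu\approx(1+\mu)\lv x-y\rv$, i.e.\ that $\Re((1+\mu)\lv x-y\rv+O(1))>0$. Here $\Re(1+\mu)>1-1/L$ need not be positive if $L<1$, but $F(x)=x$ at infinity forces $L\geq1$, so $\Re(1+\mu)>0$ on $D(0,1/L)$. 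The sign of the square root is then forced by continuity in $\lv x-y\rv$ along rays. So
\[
\lv K_\mu^z(x,y)\rv\lesssim(\lv x-y\rv^{-2}+1)e^{-c\lv x-y\rv},
\]
where the terms with $z$ and $\omega_\mu$ are absorbed into the exponential. If the $O(1)$ error creates trouble for nonzero $\Im\mu$ direction mixing, we use instead a direct estimate: $\Im(\omega\delta)$ is a positive combination, after rotation by $\arg(1+\mu)$, since the argument of $\delta_\mu^2$ tends to $2\arg(1+\mu)$.

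\textbf{Step 3: boundedness and analyticity.} The majorant $\lv x-y\rv^{-2}e^{-c\lv x-y\rv}$ lies in $L^1(\R^3)$ in the variable $x-y$, so Schur's test (or Young's inequality for the convolution majorant) gives boundedness on $L^2$, with norm bounded locally uniformly in $(\mu,z)$. For analyticity, the kernel is pointwise analytic in $\mu$ (for $z\in(-1,1)$, where $z\notin\sigma_\mu$ for all $\mu$, since $z^2-1<0$ and $(1+\mu)^2(z^2-1)\in[0,\infty)$ would require $(1+\mu)^2\leq0$, impossible when $\Re(1+\mu)>0$) and analytic in $z\in\C\setminus\sigma_\mu$. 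For $\phi,\psi\in C_c^\infty$, the matrix element $\la\phi,T_\mu^z\psi\ra$ is analytic by dominated convergence with Morera's theorem, using the locally uniform majorant. Local uniform boundedness of the operator norms plus weak analyticity on a dense set yields analyticity in $\Lcal(L^2)$.
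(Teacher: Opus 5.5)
Your proposal is correct and follows the paper's proof essentially step by step: boundedness of $\rho_\mu$, a lower bound $|\delta_\mu(x,y)|\gtrsim |x-y|$, and the key estimate $\delta_\mu(x,y)-(1+\mu)|x-y|=O(1)$ giving decay at rate $\Im((1+\mu)\omega_\mu(z))>0$, followed by Schur's test and dominated convergence for analyticity. The differences are minor, with one point to repair: the paper gets the $O(1)$ bound for all $x,y$ at once by dividing $\delta_\mu^2-(1+\mu)^2|x-y|^2$ by $\delta_\mu+(1+\mu)|x-y|$, whose modulus is at least $(1+\Re\mu)|x-y|$ since $\Re\delta_\mu>0$, so no branch tracking is needed; it also uses the compactness constant $c_\mu>0$ for the lower bound, and you should do the same, because the Remark-type combination you cite only gives $|\delta_\mu^2|\geq \frac{|\Im\mu|}{|\mu|}(1-|\mu|^2L^2)|x-y|^2$, which degenerates as $\mu$ approaches the real axis, so your claimed local uniformity in $\mu$ there needs that compactness argument.
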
 

We can now state and prove the main result of this section.

\begin{proposition}\label{DistortedResolvent}
    For all $\mu\in D(0,1/L)$, if $z\not\in\sigma_\mu$ then $z\not\in\Sp(\Dvec_\mu)$ and $(\Dvec_\mu-z)^{-1}=T_\mu^z$.
\end{proposition}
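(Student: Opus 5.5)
We will prove that $T_\mu^z$ is a two-sided inverse of $\Dvec_\mu-z$ on $H^1$. The plan is to obtain the two resolvent identities for real $\mu$ and then continue them analytically, first in $\mu$ and then in $z$, using Lemma \ref{TAnalyticity}. The adjoint computation of Proposition \ref{FreeDistortedAdjoint} will upgrade the resulting identities to membership of the range in $H^1$.

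\emph{Step 1 (real $\mu$, $z\in(-1,1)$).} For $\mu\in(-1/L,1/L)$ real, \eqref{ConjugateFreeResolvent} and the change of variables give $T_\mu^z=U_\mu(\Dvec-z)^{-1}U_\mu^{-1}=(\Dvec_\mu-z)^{-1}$. In particular, for $\phi,\psi\in H^1$ (or $\phi\in C^\infty_c$) and $u\in L^2$,
\[\la \phi, T_\mu^z(\Dvec_\mu-z)\psi\ra=\la\phi,\psi\ra,\qquad \la(\Dvec_{\overline\mu}-\overline z)\phi,T_\mu^z u\ra=\la \phi,u\ra .\]
The second identity uses Proposition \ref{FreeDistortedAdjoint}: since $(\alpha\cdot\nabla_\mu)^*=-\alpha\cdot\nabla_{\overline\mu}$ on $H^1$, we have $(\Dvec_\mu-z)^*=\Dvec_{\overline\mu}-\overline z$ with domain $H^1$. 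Both identities are written so that each side is analytic in $\mu$. Indeed, $\Dvec_\mu\psi$ and $\Dvec_{\overline\mu}\phi$ depend analytically, respectively antianalytically, on $\mu$ in $L^2$, because $J_\mu$ and $a_\mu$ are analytic with bounded coefficients. The pairing is antilinear in the first slot, so the combination is analytic. By item (1) of Lemma \ref{TAnalyticity}, both identities extend to all $\mu\in D(0,1/L)$ for fixed $z\in(-1,1)$. Since $(-1,1)\cap\sigma_\mu=\emptyset$, item (2) then extends them in $z$ to the connected set $\C\setminus\sigma_\mu$. Connectedness holds because $\sigma_\mu$ consists of two disjoint half-curves going to infinity.

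\emph{Step 2 (left inverse).} From the first identity and density we get $T_\mu^z(\Dvec_\mu-z)\psi=\psi$ for $\psi\in H^1$. Hence $\Dvec_\mu-z$ is injective on its domain $H^1$.

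\emph{Step 3 (right inverse and range).} The second identity says that for every $u\in L^2$ the vector $v=T_\mu^z u$ satisfies $\la(\Dvec_{\overline\mu}-\overline z)\phi,v\ra=\la\phi,u\ra$ for all $\phi\in H^1$. That is, $v\in\Dom((\Dvec_{\overline\mu}-\overline z)^*)$ with $(\Dvec_{\overline\mu}-\overline z)^*v=u$. Applying Proposition \ref{FreeDistortedAdjoint} with $\overline\mu$ in place of $\mu$ gives $(\Dvec_{\overline\mu}-\overline z)^*=\Dvec_\mu-z$ with domain exactly $H^1$. Therefore $v\in H^1$ and $(\Dvec_\mu-z)T_\mu^z u=u$. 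Combined with Step 2, $\Dvec_\mu-z:H^1\to L^2$ is bijective with bounded inverse $T_\mu^z$. Thus $z\notin\Sp(\Dvec_\mu)$; closedness of $\Dvec_\mu$ on $H^1$ follows from Lemma \ref{Ellipticity}.

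\emph{Main obstacle.} The delicate point is the regularity in Step 3: the analytic continuation by itself only yields weak identities, and the range of $T_\mu^z$ must be shown to lie in $H^1$. This is handled precisely by the full domain identification in Proposition \ref{FreeDistortedAdjoint}, which rests on the Friedrichs argument and Lemma \ref{Ellipticity}. A secondary point is to verify that the pairings are genuinely analytic in $\mu$, which requires pairing against $\Dvec_{\overline\mu}$, and that $\C\setminus\sigma_\mu$ is connected and meets $(-1,1)$.
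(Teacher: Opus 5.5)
Your proposal is correct and follows essentially the same route as the paper. You prove the left-inverse identity and the weak right-inverse identity $\la(\Dvec_{\overline\mu}-\overline z)\psi,T_\mu^z\phi\ra=\la\psi,\phi\ra$ for real $\mu$ and $z\in(-1,1)$, continue both analytically first in $\mu$ and then in $z$ via Lemma \ref{TAnalyticity}, and use the domain identification of Proposition \ref{FreeDistortedAdjoint} to place $T_\mu^z\phi$ in $H^1$. Your explicit checks that $(-1,1)\cap\sigma_\mu=\emptyset$ and that $\C\setminus\sigma_\mu$ is connected, and your correct citation of item (2) for the $z$-continuation, fill in details the paper leaves implicit.
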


\begin{proof}
    For $z\in(-1,1)$ and $\mu\in(-1/L,1/L)$, one has on $H^1$ 
    \begin{equation}\label{LeftInverse}
        T_\mu^z(\Dvec_\mu-z)=1.
    \end{equation}
    Since $\mu\mapsto \Dvec_\mu-z$ is analytic from $D(0,1/L)$ to $\Lcal(H^1,L^2)$ and thanks to the first point of Lemma \ref{TAnalyticity}, \eqref{LeftInverse} remains true for all $\mu\in D(0,1/L)$ by analytic continuation. Thanks to the first point of Lemma \ref{TAnalyticity}, \eqref{LeftInverse} remains true for all $z\in\C\setminus\sigma_\mu$ by analytic continuation. 
    
    Let $\phi\in L^2$. For $z\in(-1,1)$ and $\mu\in(-1/L,1/L)$, $T_\mu^z\phi\in H^1$ and
    \begin{equation*}\label{RightInverse}
        (\Dvec_\mu-z)T_\mu^z\phi=\phi
    \end{equation*}
    Then for all $\psi\in H^1$ 
    \begin{equation}\label{NoName}
        \la(\Dvec_{\overline{\mu}}-\overline{z})\psi,T_\mu^z\phi\ra_{L^2}=\la\psi,\phi\ra_{L^2}.
    \end{equation}
    By analytic continuation, \eqref{NoName} remains true for all $\mu\in D(0,1/L)$ and $z\in\C\setminus\sigma_\mu$. Thanks to Proposition \ref{FreeDistortedAdjoint} and \eqref{NoName} we conclude that $T_\mu^z\phi\in H^1$ and $(\Dvec_\mu-z)T_\mu^z\phi=\phi$. 
\end{proof}

The distortion acting only outside $\lcb F=0\rcb$, one expects the distorted free resolvent to remain unchanged when localized in that region. The following Corollary makes this observation precise and provides a key tool for the analysis of the perturbed operator. Denote by $C_\mu$ the connected component of the complement of $\sigma_\mu\cup\sigma_0$ containing $0$ (see Figure \ref{Cmu}).
\begin{figure}[htbp]
    \centering
    \begin{tikzpicture}[scale=1.5]
        \fill[
            pattern=north east lines,
            pattern color=gray,
            even odd rule
        ]
        (-3,-1.5) rectangle (3,1.5)
        
        (-3,0)
        -- (-1,0)
        .. controls (-1,0.8) and (-1.8,1.0) .. (-3,1.2)
        -- cycle
        
        (1,0)
        -- (3,0)
        -- (3,-1.2)
        .. controls (1.8,-1.0) and (1,-0.8) .. (1,0)
        -- cycle;
        
        \draw[->] (-3,0) -- (3,0) node[right] {$\Re z$};
        \draw[->] (0,-1.5) -- (0,1.5) node[above] {$\Im z$};
        
        \draw[very thick, blue] (-3,0) -- (-1,0);
        \draw[very thick, blue] (1,0) -- (3,0);
        
        \node[above, blue] at (2.2,0) {$\sigma_0$};
        
        \draw[thick,red,-]
          (1,0)
          .. controls (1,-0.8) and (1.8,-1.0)
          .. (3,-1.2);
        
        \draw[thick,red,-]
          (-1,0)
          .. controls (-1,0.8) and (-1.8,1.0)
          .. (-3,1.2);
        
        \node[red] at (2.4,-1.3) {$\sigma_\mu$};
        
        \fill (-1,0) circle (1.5pt);
        \fill (1,0) circle (1.5pt);
        
        \node[below] at (-1,0) {$-1$};
        \node[above] at (1,0) {$1$};
        
        \node at (1,1.0) {$C_\mu$};
    \end{tikzpicture}
    \caption{The domain $C_\mu$ when $\Im\mu>0$.}
    \label{Cmu}
\end{figure}

\begin{corollary}
    For all $\mu\in D(0,1/L)$, $z\in C_\mu$ and $\chi\in C^1_c$ supported in $\lcb F=0\rcb$ 
    \begin{equation}\label{CutoffFreeDistorted}
        \chi(\Dvec_\mu-z)^{-1}\chi=\chi(\Dvec-z)^{-1}\chi
    \end{equation}
\end{corollary}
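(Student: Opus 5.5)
The plan is to compare kernels directly using Proposition \ref{DistortedResolvent}, and then to extend the identity from the gap $(-1,1)$ to all of $C_\mu$ by analytic continuation in $z$.

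\emph{Step 1: equality of the kernels for $z\in(-1,1)$.} Fix $\mu\in D(0,1/L)$ and take $z\in(-1,1)$, which lies in $C_\mu$. By Proposition \ref{DistortedResolvent}, $(\Dvec_\mu-z)^{-1}=T_\mu^z$. Hence $\chi(\Dvec_\mu-z)^{-1}\chi$ is the integral operator with kernel $\chi(x)K_\mu^z(x,y)\chi(y)$.

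On $\Supp\chi\times\Supp\chi\subset\{F=0\}^2$ the distortion disappears:
\begin{itemize}
\item $f_\mu(x)=x$ there, and $\nabla F(x)=0$, at least on the interior of $\{F=0\}$; by continuity of $\nabla F$ this also holds on the closure of that interior.
\item Consequently $\rho_\mu(x)=1$ and $f_\mu(x)-f_\mu(y)=x-y$.
\item $\delta_\mu(x,y)=\sqrt{|x-y|^2}=|x-y|$, using the principal root.
\end{itemize}
It remains to check that $\omega_\mu(z)=(z^2-1)^{1/2}$ for $z\in(-1,1)$. Here $z^2-1<0$, so $(z^2-1)^{1/2}=i\sqrt{1-z^2}$. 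By definition, $\omega_\mu(z)$ is a square root of $z^2-1$, and it depends continuously on $\mu\in D(0,1/L)$, because $(1+\mu)^2(z^2-1)$ stays off $[0,\infty)$: indeed $z\notin\sigma_\mu$. At $\mu=0$ it equals $i\sqrt{1-z^2}$. Since the two roots $\pm i\sqrt{1-z^2}$ never coincide, continuity along the connected disc forces $\omega_\mu(z)=i\sqrt{1-z^2}$ for all $\mu$. Thus the localized kernels coincide with $\chi(x)K_z(x-y)\chi(y)$, and \eqref{CutoffFreeDistorted} holds for $z\in(-1,1)$.

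If $\nabla F$ might not vanish on the boundary of $\{F=0\}$, I would instead take $\Supp\chi$ inside the interior, or note that $\nabla F=0$ a.e. on $\{F=0\}$, which suffices for the kernel identity.

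\emph{Step 2: analytic continuation in $z$.} The left-hand side of \eqref{CutoffFreeDistorted} is analytic on $\C\setminus\sigma_\mu$ by Lemma \ref{TAnalyticity}(2). The right-hand side is analytic on $\C\setminus\sigma_0$. Both are therefore analytic $\Lcal(L^2)$-valued functions on the connected open set $C_\mu$, which avoids $\sigma_\mu\cup\sigma_0$. They agree on $(-1,1)\subset C_\mu$, a set with accumulation points, so the identity theorem gives equality on all of $C_\mu$.

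\emph{Main obstacle.} The delicate point is that $C_\mu$ is defined as the component containing $0$; this is exactly what makes the continuation legitimate. Beyond that, the only real concern is the branch consistency of $\omega_\mu$ in Step 1, which the continuity argument handles. The boundary regularity of $\{F=0\}$ is a minor technical issue addressed by the remark at the end of Step 1.
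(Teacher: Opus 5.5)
Your proof is correct, but it takes a different route from the paper's. The paper works at the operator level for real $\mu$ first. Since $\chi U_\mu=\chi=(\overline{\chi}U_\mu)^*$, the conjugation formula \eqref{ConjugateFreeResolvent} gives the identity immediately for $\mu\in(-1/L,1/L)$ and $z\in(-1,1)$. It is then continued analytically, first in $\mu$ to $D(0,1/L)$ (via Proposition \ref{DistortedResolvent} and Lemma \ref{TAnalyticity}(1)), then in $z$ to $C_\mu$.

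You instead apply Proposition \ref{DistortedResolvent} at complex $\mu$ directly and read the identity off the explicit kernel $K_\mu^z$, so no continuation in $\mu$ is needed. The price is checking the branch choices $\delta_\mu(x,y)=|x-y|$ and $\omega_\mu(z)=(z^2-1)^{1/2}$, which you do correctly. For the latter, a quicker argument is the one used in the proof of Theorem \ref{LinkResonanceEigenvalue}: $\Im((1+\mu)\omega_\mu(z))>0$ together with $1+\Re\mu>0$ selects the root $i\sqrt{1-z^2}$. The same fact $1+\Re\mu>0$ gives $(-1,1)\subset C_\mu$, which you assert without comment.

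The paper's argument is more structural: it only uses that $U_\mu$ acts trivially where $F$ vanishes and never touches the formula for $K_\mu^z$. Yours shows why the localized kernels literally coincide. In fact your argument yields the identity on all of $C_\mu$ at once. On the connected set $C_\mu$, which avoids $\pm1$, both $\omega_\mu(z)$ and $(z^2-1)^{1/2}$ are continuous square roots of the nonvanishing function $z^2-1$ and agree at $z=0$, hence everywhere. So your Step 2 could be dropped.

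Your treatment of the boundary of $\{F=0\}$ is fine. It is enough to note that the kernel is multiplied by $\chi(x)\chi(y)$, so only the open set $\{\chi\neq0\}$ matters. That set lies in the interior of $\{F=0\}$, where $F$ and $\nabla F$ both vanish.
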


\begin{proof}
    One has \eqref{CutoffFreeDistorted} for $z\in(-1,1)$ and $\mu\in(-1/L,1/L)$ since $\chi U_\mu=\chi=(\overline{\chi}U_\mu)^*$ and
    \[\chi(\Dvec_\mu-z)^{-1}\chi=\chi U_\mu(\Dvec-z)^{-1}(\overline{\chi}U_\mu)^*.\]
    By analytic continuation \eqref{CutoffFreeDistorted} remains true for all $\mu\in D(0,1/L)$ and then all $z\in C_\mu$.
\end{proof}

\section{Distorted Dirac operators with compactly supported perturbations}\label{Section3}

In this section, we are interested in the Dirac operator 
\begin{equation}\label{PerturbedDirac}
    H:=\Dvec+V
\end{equation}
where $V\in L^\infty_\comp(\R^3,\C^{4\times4})$ is not necessarily self-adjoint. It is clear that $H$ is closed in $L^2$ on its domain $H^1$ and that its essential spectrum is $\sigma_0$. Let us recall the definition of the resonances of $H$ and their multiplicity. Consider the two-sheeted Riemann surface
\begin{equation*}
    \Mpaz:=\lcb(z,\omega)\in\C^2\mbox{, } \omega^2=z^2-1\rcb.
\end{equation*}
There exists a finite meromorphic function from $\Mpaz$ to $\Lcal(L^2_\comp,H^1_\loc)$ denoted $\Rpaz$ such that if $(z,\omega)\in\Mpaz$ is not a pole of $\Rpaz$ and $\Im\omega>0$ then $z\not\in\Sp(H)$ and on $L^2_\comp$ 
\begin{equation}\label{ResolventContinuation}
    \Rpaz(z,\omega)=(H-z)^{-1}.
\end{equation} 
A resonance of $H$ is a pole $(\lambda,\kappa)$ of $\Rpaz$. When $\kappa\not=0$ its multiplicity is the rank of the residue of $\Rpaz$ in any local chart defined near $(\lambda,\kappa)$. For further details, see \cite{Du25}. We denote by $\Res(H)$ the resonance set of $H$.

We define the distorted Dirac operator by replacing $\Dvec$ by $\Dvec_\mu$ in \eqref{PerturbedDirac}
\begin{equation*}
    H_\mu:=\Dvec_\mu+V.
\end{equation*}
and assume in all the section that $F=0$ on a neighborhood of $\Supp V$ (see Figure \ref{FGraph}). 

\subsection{Resonance-eigenvalue correspondence} In this subsection we prove the  
\begin{theorem}\label{LinkResonanceEigenvalue}
     Let $\mu\in D(0,1/L)$ and $(\lambda,\kappa)\in\Mpaz$ with $\Im(1+\mu)\kappa>0$. Then $(\lambda,\kappa)\in\Res(H)$ if and only if $\lambda\in\Sp_\disc(H_\mu)$, with the same multiplicity.
\end{theorem}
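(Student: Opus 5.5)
The strategy is to compare localized resolvents of $H$ and $H_\mu$ through the free resolvents, using the Corollary \eqref{CutoffFreeDistorted} and analytic Fredholm theory.

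Fix $\chi\in C^1_c$ real-valued, supported in $\{F=0\}$, with $\chi=1$ on a neighbourhood of $\Supp V$. Write $V=\chi V\chi$.

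\emph{Step 1: Fredholm representations.} For $z\notin\sigma_\mu$, Proposition \ref{DistortedResolvent} gives
\[
H_\mu-z=(1+V(\Dvec_\mu-z)^{-1})(\Dvec_\mu-z),
\]
so that
\[
(H_\mu-z)^{-1}=(\Dvec_\mu-z)^{-1}\big(1+V\chi(\Dvec_\mu-z)^{-1}\chi\big)^{-1}
\]
whenever the inverse exists. Here $V\chi(\Dvec_\mu-z)^{-1}\chi$ is compact, because $\chi(\Dvec_\mu-z)^{-1}\chi$ maps $L^2$ into $H^1$ with compact support and Rellich applies. The inverse exists for $\Im z\gg1$ by a Neumann series, so analytic Fredholm theory on the connected set $\C\setminus\sigma_\mu$ shows that $(H_\mu-z)^{-1}$ is meromorphic there with finite-rank residues. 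Hence $\Sp(H_\mu)\setminus\sigma_\mu$ is discrete, and $\sigma_\mu$ is exactly the essential spectrum.

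Similarly, on $\Mpaz$ one has
\[
\Rpaz(z,\omega)=R_0(z,\omega)\big(1+V\chi R_0(z,\omega)\chi\big)^{-1},
\]
where $R_0$ is the continued free resolvent given by the kernel \eqref{FreeKernel} with $(z^2-1)^{1/2}$ replaced by $\omega$.

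\emph{Step 2: identify the localized operators.} The region $\{(z,\omega)\in\Mpaz:\Im(1+\mu)\omega>0\}$ is parametrized by $z\in\C\setminus\sigma_\mu$ via $\omega=\omega_\mu(z)$ from \eqref{RotatedBranch}. On this region, $\chi(\Dvec_\mu-z)^{-1}\chi$ has kernel $\chi(x)K_z(x-y)\chi(y)$ with $(z^2-1)^{1/2}$ replaced by $\omega_\mu(z)$, because $\rho_\mu=1$, $f_\mu=\mathrm{id}$ and $\delta_\mu(x,y)=|x-y|$ on $\{F=0\}$. This is precisely $\chi R_0(z,\omega_\mu(z))\chi$. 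The equality holds directly from the explicit kernel of Lemma \ref{TAnalyticity}, which extends the Corollary from $C_\mu$ to all of $\C\setminus\sigma_\mu$; alternatively one can use analytic continuation in $z$, since both sides are analytic on the connected sheet region. Consequently
\[
\chi(H_\mu-z)^{-1}\chi=\chi\Rpaz(z,\omega_\mu(z))\chi .
\]

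\emph{Step 3: poles and multiplicities.} Poles of $(H_\mu-z)^{-1}$ near $\lambda$ and poles of $\Rpaz$ near $(\lambda,\kappa)$ are both exactly the non-invertibility points of the same Fredholm family $1+V\chi R\chi$. Hence $\lambda\in\Sp_\disc(H_\mu)$ if and only if $(\lambda,\kappa)\in\Res(H)$ with $\kappa=\omega_\mu(\lambda)$. The chart $z\mapsto(z,\omega_\mu(z))$ is a valid local chart, since $\kappa\neq0$ on this region.

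For multiplicities, I compare three ranks: the rank of the Riesz projection $\Pi_\mu=-\frac1{2\pi i}\oint(H_\mu-z)^{-1}dz$, the rank of $\chi\Pi_\mu\chi$, and the analogous pair for $\Rpaz$. The main obstacle is showing that cutting off preserves rank.

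For the distorted side I would argue as follows. If $u\in\Ran\Pi_\mu$ vanished on $\Supp\chi$, then, since $(H_\mu-\lambda)^ku=0$ and $V u=0$, we would get $(\Dvec_\mu-\lambda)^k u=0$, forcing $u=0$ by Proposition \ref{DistortedResolvent}. Thus $u\mapsto\chi u$ is injective on $\Ran\Pi_\mu$. The same reasoning applies to $\Ran\Pi_\mu^*=\Ran\Pi(H_\mu^*)$. Here $H_\mu^*=\Dvec_{\overline\mu}+V^*$ by Proposition \ref{FreeDistortedAdjoint}, and $\lambda$ is then away from $\sigma_{\overline\mu}$ after conjugation. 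Together these give $\rank\chi\Pi_\mu\chi=\rank\Pi_\mu$.

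On the resonance side, $\Rpaz$ has the form $R_0 A$ with $A=(1+V\chi R_0\chi)^{-1}$, so the residue is $R_0\,\Res(A)$ plus terms that vanish. The residue of $A$ factors through $V\chi$. Cutting off with $\chi$ preserves rank because $R_0 V$ applied to an outgoing solution cannot vanish identically near $\Supp V$ without making the solution zero, by unique continuation for the free equation. This gives equality of multiplicities.
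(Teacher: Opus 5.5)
Your overall architecture (a localized resolvent identity plus rank preservation of cut-off residues) is the paper's. Two parts are fine. Reading off $\chi(\Dvec_\mu-z)^{-1}\chi=\chi\Rpaz_0(z,\omega_\mu(z))\chi$ on all of $\C\setminus\sigma_\mu$ directly from the kernel $K_\mu^z$ (since $\rho_\mu=1$ and $f_\mu=\mathrm{id}$ where $\chi\neq0$) is a legitimate shortcut compared with the paper's continuation from $(-1,1)$. Your distorted-side argument via $\Ran\Pi$ and $\Ran\Pi^*$ is exactly Proposition \ref{TruncatedRanks}(1).

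However, the factorizations in Step 1 are false. With $R=(\Dvec_\mu-z)^{-1}$ and $\chi V=V\chi=V$ one computes
\[
(1+VR)\bigl(1+V\chi R\chi\bigr)^{-1}=1+VR(1-\chi)\neq 1 .
\]
The correct formulas carry a non-localized factor on the right, e.g. $(H_\mu-z)^{-1}=R-RV(1+\chi R\chi V)^{-1}\chi R$ (or the paper's $\rho$-factorization), and likewise $\Rpaz=\Rpaz_0-\Rpaz_0V(1+\chi\Rpaz_0\chi V)^{-1}\chi\Rpaz_0$. For Step 2 this is harmless, because $\chi(\cdot)\chi$ of these expressions depends only on $\chi R\chi$.

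It is not harmless for the multiplicities on the resonance side, and this is the genuine gap. Your formula $\Rpaz=R_0(1+V\chi R_0\chi)^{-1}$ would make every Laurent coefficient end with $\chi$, i.e. $\Ppaz=\Ppaz\chi$. Then only injectivity of $u\mapsto\chi u$ on $\Ran\Ppaz$ would be needed, which is all your unique-continuation sketch provides. In reality $\Ppaz\neq\Ppaz\chi$ in general: the coefficients end with $\chi\Rpaz_0$, not with $\chi$. Heuristically, $\Ppaz$ pairs resonant states with co-resonant states, which are not compactly supported. So injectivity alone only gives $\rank(\chi\Ppaz\chi)=\rank(\Ppaz\chi)\le\rank(\Ppaz)$.

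You must also show that the right cut-off loses no rank, $\Ran(\Ppaz\chi)=\Ran(\Ppaz)$. The adjoint trick you used for $\Pi$ is unavailable here, since $\Ppaz\colon L^2_\comp\to H^1_\loc$. The paper supplies this step as follows:
\begin{itemize}
\item Comparing residues in $\Rpaz=\Rpaz_0-\Rpaz V\Rpaz_0$ gives $\Ppaz=-\sum_{i=0}^{I-1}\Npaz^i\Ppaz V\,\partial_z^i\Rpaz_0(z,\omega_\mu(z))|_{z=\lambda}$, hence $\Ran\Ppaz\subset\sum_i\Ran(\Npaz^i\Ppaz V)$.
\item The identity $\Npaz\Ppaz\chi_1=\Ppaz\chi(H-\lambda)\chi_1$, iterated over nested cut-offs $V\prec\chi_{I-1}\prec\dots\prec\chi_1\prec\chi$, yields $\Ran(\Npaz^i\Ppaz V)\subset\Ran(\Ppaz\chi)$.
\end{itemize}

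Two smaller points. Your injectivity argument must also handle generalized resonant states; run it through the nilpotent $\Npaz$, as you did for $H_\mu$. And ``the residue is $R_0\,\Res(A)$ plus terms that vanish'' is wrong for poles of order $>1$, where the derivatives $\partial_z^iR_0$ contribute.
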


We wish to identify suitable localizations of $\Rpaz$ and resolvent of $H_\mu$. In fact, the distortion acting outside the support of the potential, one expects \eqref{CutoffFreeDistorted} to hold for the perturbed operators. Proposition 3.1 makes this precise.  The remaining issue is the preservation of multiplicities. We must show that localization near the support of the potential does not alter the ranks of the corresponding residues. Proposition 3.2 makes this precise.

\begin{figure}[htbp]
    \begin{tikzpicture}[scale=1]
        \draw[->] (-4,0) -- (4,0) node[right] {$x$};
        \draw[->] (0,-3) -- (0,3) node[above] {$F(x)$};
        \draw[thick]
        (-3.5,-3.5) -- (-2,-2)
        .. controls (-1.8,-1.8) and (-1.7,0) .. (-1.5,0)
        -- (1.5,0)
        .. controls (1.7,0) and (1.8,1.8) .. (2,2)
        -- (3.5,3.5);
        \draw[line width=4pt] (-0.4,0) -- (1,0);
        \node[above] at (0.3,0.05) {$\Supp V$};
    \end{tikzpicture}
    \caption{Graph of the function $F$}
    \label{FGraph}
\end{figure}  

\begin{proposition}\label{DistortedSpectrum} 
    Let $\mu\in D(0,1/L)$.
    \begin{itemize}
        \item The spectrum of $H_\mu$ outside $\sigma_\mu$ is discrete.
        \item Given $z\in C_\mu$ one has $z\not\in\Sp(H_\mu)$ if and only if $z\not\in\Sp(H)$, in that case 
        \begin{equation}\label{CutoffDistorted}
            \chi(H_\mu-z)^{-1}\chi=\chi(H-z)^{-1}\chi   
        \end{equation}
        for $\chi\in C^1_c$ supported in $\lcb F=0\rcb$.
    \end{itemize}
\end{proposition}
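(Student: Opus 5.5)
Both items rest on a resolvent identity built from cutoffs. Fix $\chi_1,\chi_2\in C^\infty_c$ supported in the interior of $\lcb F=0\rcb$, real valued, with $\chi_1=1$ near $\Supp V$ and $\chi_2=1$ on $\Supp\chi_1$. The guiding principle is the same one used for \eqref{CutoffFreeDistorted}: near $\Supp V$ the operators $\Dvec_\mu$ and $\Dvec$ coincide, since $\nabla_\mu=\nabla$ where $F=0$.

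For the first item, write $V=V\chi_1$ and, for $z\notin\sigma_\mu$,
\[H_\mu-z=(1+V\chi_1(\Dvec_\mu-z)^{-1})(\Dvec_\mu-z).\]
By Proposition \ref{DistortedResolvent}, $\chi_1(\Dvec_\mu-z)^{-1}$ is bounded from $L^2$ to $H^1$ with compact support, hence compact on $L^2$ by Rellich. Multiplication by $V\in L^\infty$ keeps it compact, so $K_\mu(z):=V\chi_1(\Dvec_\mu-z)^{-1}$ is an analytic family of compact operators on each connected component of $\C\setminus\sigma_\mu$. In every component I need a point where $1+K_\mu(z)$ is invertible. I will take $z=it$ or $z$ far from $\sigma_\mu$ along a suitable ray, and use the explicit kernel of Lemma \ref{TAnalyticity}, which decays like $e^{-\Im\omega_\mu(z)\Re\delta_\mu}$, to show $\lV\chi_1(\Dvec_\mu-z)^{-1}\chi_1\rV\to0$. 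The identity $K_\mu=V\chi_1(\Dvec_\mu-z)^{-1}\chi_1$ is not available as written, so I will apply analytic Fredholm theory to $1+\chi_1(\Dvec_\mu-z)^{-1}V\chi_1$ instead, and use the standard equivalence of invertibility of $1+AB$ and $1+BA$.

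Analytic Fredholm theory then gives that $(1+K_\mu(z))^{-1}$ is meromorphic with finite-rank residues, and
\[(H_\mu-z)^{-1}=(\Dvec_\mu-z)^{-1}(1+K_\mu(z))^{-1}.\]
So the spectrum of $H_\mu$ outside $\sigma_\mu$ is a discrete set of poles with finite-rank residues, i.e.\ of finite-multiplicity eigenvalues. The main obstacle I expect is the smallness estimate in every component of $\C\setminus\sigma_\mu$ for general complex $\mu$ (not small). The cutoff must absorb the growth of $\rho_\mu$ and the behaviour of $\delta_\mu$; the latter is controlled on compact sets by Remark \ref{SquareRootDefinition}.

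For the second item, take $z\in C_\mu$. Let $R_0$ denote either $(\Dvec_\mu-z)^{-1}$ or $(\Dvec-z)^{-1}$. By \eqref{CutoffFreeDistorted} applied with $\chi_2$, the operator $\chi_1R_0V\chi_1=\chi_1\chi_2R_0\chi_2V\chi_1$ is the same in both cases. Hence $1+\chi_1R_0V\chi_1$ is invertible for $H_\mu$ exactly when it is for $H$, which gives $z\notin\Sp(H_\mu)\iff z\notin\Sp(H)$. Then expand
\[\chi(H-z)^{-1}\chi=\chi R_0\chi-\chi R_0V\chi_1(1+\chi_1R_0V\chi_1)^{-1}\chi_1R_0\chi.\]
After inserting $\chi_2$ factors (take $\chi_2=1$ on $\Supp\chi$ as well, enlarging if needed within $\lcb F=0\rcb$), every term is a localized free resolvent, and these agree by \eqref{CutoffFreeDistorted}. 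This yields \eqref{CutoffDistorted}.
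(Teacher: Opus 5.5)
Your overall architecture matches the paper's: factor through $\Dvec_\mu-z$, use Rellich compactness, apply analytic Fredholm theory, and compare via \eqref{CutoffFreeDistorted}. Your second item is correct. Passing from $1+V\chi_1(\Dvec_\mu-z)^{-1}$ to $1+\chi_1(\Dvec_\mu-z)^{-1}V\chi_1$ by the $1+AB$/$1+BA$ identity does the same job as the paper's factorization $(1+V(\Dvec_\mu-z)^{-1}(1-\rho))(1+V(\Dvec_\mu-z)^{-1}\rho)$, whose first factor is trivially invertible because $(1-\rho)V=0$. Your expansion of $\chi(H-z)^{-1}\chi$ then compares term by term after inserting $\chi_2$.

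The gap is in the first item. Analytic Fredholm theory on $\C\setminus\sigma_\mu$ gives meromorphy only once $1+K_\mu(z_*)$ is invertible at some point $z_*$. This is exactly the step you leave as the \emph{main obstacle}, planning kernel estimates in every component for large complex $\mu$. The estimates of Lemma \ref{TAnalyticity} cannot supply it as they stand: the Schur bound in the appendix carries the factor $e^{C|\omega_\mu(z)|/(1+\Re\mu)}$, which grows as $|z|\to\infty$.

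No estimate of the distorted kernel is needed. First, $\C\setminus\sigma_\mu$ is connected, since $\sigma_\mu$ consists of two disjoint curves from $\pm1$ to infinity, so one point suffices. Second, $L\geq 1$ forces $\Re(1+\mu)>0$, so the whole imaginary axis lies in $C_\mu$. At $z=it$, \eqref{CutoffFreeDistorted} gives $\chi_1(\Dvec_\mu-it)^{-1}\chi_1=\chi_1(\Dvec-it)^{-1}\chi_1$. This has norm $O(|t|^{-1})$ by self-adjointness of $\Dvec$, so $\|\chi_1(\Dvec_\mu-it)^{-1}V\chi_1\|<1$ for $|t|$ large. Concretely, on $\Supp\chi_1\times\Supp\chi_1$ one has $\rho_\mu=1$ and $\delta_\mu(x,y)=|x-y|$, so the kernel is literally the free one and there is no growth of $\rho_\mu$ to absorb. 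This is the paper's step \eqref{KeyPoint}.

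Alternatively, your own second item closes the gap, since it does not use analytic Fredholm theory. A Neumann series gives $it\notin\Sp(H)$ for $|t|\gg1$, and your second item then gives $it\notin\Sp(H_\mu)$, i.e.\ $1+K_\mu(it)$ is invertible.
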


\begin{proof} 
    Let $z\in\C\setminus\sigma_\mu$. From
    \[H_\mu-z=(1+V(\Dvec_\mu-z)^{-1})(\Dvec_\mu-z)\]
    we deduce that $H_\mu-z$ has a bounded inverse from $L^2$ to $H^1$ if and only if $1+V(\Dvec_\mu-z)^{-1}$ is invertible. The resolvent $(\Dvec_\mu-z)^{-1}$ being bounded from $L^2$ to $H^1$ and $V$ being compactly supported, Rellich-Kondrachov Theorem implies that the operator $V(\Dvec_\mu-z)^{-1}$ is compact from $L^2$ into itself. Let $\rho\in C^1_c$ supported in $\lcb F=0\rcb$ and such that $V\prec\rho$ i.e $\rho=1$ on a neighborhood of $\Supp V$. Consequently $\rho V=V$ so that
    \[1+V(\Dvec_\mu-z)^{-1}=(1+V(\Dvec_\mu-z)^{-1}(1-\rho))(1+V(\Dvec_\mu-z)^{-1}\rho)\]
    and $1+V(\Dvec_\mu-z)^{-1}(1-\rho)$ and $1-V(\Dvec_\mu-z)^{-1}(1-\rho)$ are inverse one of the other. Thanks to \eqref{CutoffFreeDistorted} if $z\in C_\mu$
    \begin{equation}\label{KeyPoint}
        V(\Dvec_\mu-z)^{-1}\rho=V(\Dvec-z)^{-1}\rho
    \end{equation}
    therefore if $\Im z\not=0$
    \[\lV V(\Dvec_\mu-z)^{-1}\rho\rV_{\Lcal(L^2)}\lesssim\frac{1}{\lv\Im z\rv}\]
    and hence $1+V(\Dvec_\mu-z)^{-1}\rho$ is invertible for $\lv\Im z\rv\gg1$ thanks to Neumann invertibility criterion. We conclude that $1+V(\Dvec_\mu-z)^{-1}$ is invertible for such $z$. By the analytic Fredholm Theorem $z\mapsto(1+V(\Dvec_\mu-z)^{-1})^{-1}$ is finite meromorphic from $\C\setminus\sigma_\mu$ to $\Lcal(L^2)$ with poles set
    \[\lcb\lambda\in\C\setminus\sigma_\mu\mbox{, }1+V(\Dvec_\mu-\lambda)^{-1} \mbox{ is not invertible}\rcb=\Sp(H_\mu)\cap(\C\setminus\sigma_\mu).\]
    If $\lambda\in\Sp(H_\mu)\cap(\C\setminus\sigma_\mu)$ then $\lambda$ is isolated in $\Sp(H_\mu)$ and for $0<\lv z-\lambda\rv\ll 1$ 
    \[(H_\mu-z)^{-1}=(\Dvec_\mu-z)^{-1}(1+V(\Dvec_\mu-z)^{-1})^{-1}\]
    which guarantees that $\lambda$ is a pole of finite type of $z\mapsto(H_\mu-z)^{-1}$. Hence every point of $\Sp(H_\mu)\cap(\C\setminus\sigma_\mu)$ is an isolated eigenvalue of finite algebraic multiplicity of $H_\mu$, proving the first statement. 

    Considering the argument with $\mu=0$ and \eqref{KeyPoint} we conclude that $H_\mu$ and $H$ have same spectrum in $C_\mu$. Finally let $z\in C_\mu$ with $z\not\in\Sp(H_\mu)$ and $\chi\in C^1_c$ supported in $\lcb F=0\rcb$. Since 
    \[(1+V(\Dvec_\mu-z)^{-1})^{-1}=(1+V(\Dvec_\mu-z)^{-1}\rho)^{-1}(1-V(\Dvec_\mu-z)^{-1}(1-\rho))\]
    then with $\Tilde{\chi}\in C^1_c$ supported in $\lcb F=0\rcb$ such that $\chi,\rho\prec\Tilde{\chi}$ and using again \eqref{CutoffFreeDistorted} we get 
    \begin{equation*}
        \left\{
        \begin{array}{ll}
            (1-V(\Dvec_\mu-z)^{-1}(1-\rho))\chi=\Tilde{\chi}(1-V(\Dvec-z)^{-1}(1-\rho))\chi\\
            
            \lp1+V(\Dvec_\mu-z)^{-1}\rho\rp^{-1}\Tilde{\chi}=\Tilde{\chi}\lp1+V(\Dvec-z)^{-1}\rho\rp^{-1}
        \end{array}
        \right.
    \end{equation*}
    and then
    \[\chi(H_\mu-z)^{-1}\chi=\chi(\Dvec-z)^{-1}\Tilde{\chi}\lp1+V(\Dvec-z)^{-1}\rho\rp^{-1}\lp1-V(\Dvec-z)^{-1}(1-\rho)\rp\chi.\]
    Again this holds in particular for $\mu=0$ and we eventually get \eqref{CutoffDistorted}.
\end{proof}

Since resonances of $H$ are points of the Riemann surface $\Mpaz$, whereas the eigenvalues of $H_\mu$ are complex numbers, we must identify an appropriate local chart of $\Mpaz$. We use the parametrization given by the branch $\omega_\mu$ (see \eqref{RotatedBranch} for its definition). A point $(\lambda,\kappa)\in\Mpaz$ satisfies $\Im(1+\mu)\kappa>0$ if and only if $\kappa=\omega_\mu(\lambda)$. In that case $(\lambda,\kappa)\in\Res(H)$ if and only if $\lambda$ is a pole of $z\mapsto\Rpaz(z,\omega_\mu(z))$ and its multiplicity is the rank of the corresponding residue. 

Before turning to multiplicity issues, we first record a simple relation between the distorted operators and their adjoints that will be used repeatedly below. Thanks to Proposition \ref{FreeDistortedAdjoint}, with domains $H^1$ one has 
\begin{equation}\label{PerturbedDistortedAdjoint}
    H_\mu^*=\Dvec_{\overline{\mu}}+V^*.
\end{equation}

\begin{proposition}\label{TruncatedRanks}
    Let $\chi\in C^1_c$ such that $V\prec\chi$. 
    \begin{enumerate}
        \item If $\lambda\in\Sp_\disc(H_\mu)$ and $\Pi$ is the associated Riesz projection then $\rank(\Pi)=\rank(\chi\Pi\chi)$.
        \item If $(\lambda,\kappa)\in\Res(H)$ with $\Im(1+\mu)\kappa>0$ and $\Ppaz$ is the residue of $z\mapsto\Rpaz(z,\omega_\mu(z))$ at $\lambda$ then $\rank(\Ppaz)=\rank(\chi\Ppaz\chi)$.
    \end{enumerate}
\end{proposition}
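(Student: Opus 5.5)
Both statements rest on one mechanism: the range of the residue (respectively of the Riesz projection) is determined by the values of its elements on $\Supp V$. We show that an element of $\Ran\Pi$ which vanishes near $\Supp V$ must be zero, and we use this twice, once for $\Pi$ and once for its adjoint.

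\emph{Point (1).} Since $\chi\Pi\chi$ is a composition, $\rank(\chi\Pi\chi)\le\rank(\Pi)$, so only the reverse inequality needs proof. It suffices to show that the multiplication maps $\chi:\Ran\Pi\to L^2$ and $\chi:\Ran\Pi^*\to L^2$ are injective. Indeed, $\Pi\chi$ has range equal to $\Ran\Pi$ whenever $\chi\Pi^*$ is injective on $\Ran\Pi^*$, because $(\Pi\chi)^*=\chi\Pi^*$ restricted to $\Ran\Pi^*$ is injective and finite ranks are preserved under adjoints. Applying injectivity of $\chi$ on $\Ran\Pi$ once more then gives $\rank(\chi\Pi\chi)=\rank(\Pi)$.

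For injectivity, take $\psi\in\Ran\Pi$ with $\chi\psi=0$. Then $(H_\mu-\lambda)^N\psi=0$ for some $N$. On the open set where $\chi=1$ we have $\psi=0$, so $V\psi=0$ there; since $V$ is supported where $\chi=1$, we get $V\psi=0$ everywhere. Hence $(\Dvec_\mu-\lambda)^N\psi=0$, where we use that $V$ annihilates all the intermediate vectors $(H_\mu-\lambda)^k\psi$, which also vanish near $\Supp V$ by locality of the differential operator. Since $\lambda\notin\sigma_\mu$, Proposition \ref{DistortedResolvent} shows that $\Dvec_\mu-\lambda$ is injective, so $\psi=0$.

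For $\Pi^*$, it is the Riesz projection of $H_\mu^*=\Dvec_{\overline\mu}+V^*$ at $\overline\lambda$ by \eqref{PerturbedDistortedAdjoint}. The same argument applies because $\overline\lambda\notin\sigma_{\overline\mu}$: indeed $\sigma_{\overline\mu}=\overline{\sigma_\mu}$.

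\emph{Point (2).} Here there is no operator on $L^2$ whose residue is $\Ppaz$, so we follow the same scheme through the resolvent identity. Near $\lambda$ we write
\[
\Rpaz(z,\omega_\mu(z))=R_0(z)-R_0(z)V\Rpaz(z,\omega_\mu(z)),
\]
where $R_0(z)$ is the free outgoing resolvent continued via the kernel $K_z$ with branch $\omega_\mu$; this identity holds by analytic continuation from $\Im\omega>0$. Since $R_0$ is holomorphic, taking residues gives
\[
\Ppaz=-R_0(\lambda)V\Ppaz-\text{(lower Laurent terms)}.
\]
More robustly, $\Ppaz=\Ppaz\chi'$-type factorizations follow from $\Rpaz=R_0-R_0V\Rpaz$ together with its transpose $\Rpaz=R_0-\Rpaz VR_0$, which gives
\[
\Ppaz=-\Res\big(R_0V\Rpaz\big)=\text{(sum of terms of the form } A\,\chi\,\Ppaz_k\,\chi\,B\text{)},
\]
where $V=\chi V\chi$ and $\Ppaz_k$ are Laurent coefficients. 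I would first try to reduce to the simplest case: use the structure $\Ppaz=-R_0V\,(\ldots)\,VR_0$ with finite-rank middle factor to show
\[
\Ran\Ppaz=R_0(\lambda)\text{-image of }\Ran(\chi\Ppaz\chi)\text{-type spaces},
\]
and handle the higher order terms by the Jordan chain argument as in point (1).

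An alternative, possibly cleaner, route is to invoke Proposition \ref{DistortedSpectrum}, which gives \eqref{CutoffDistorted} on $C_\mu$ and, by analytic continuation in $z$ on the sheet parametrized by $\omega_\mu$, $\chi\Rpaz(z,\omega_\mu(z))\chi=\chi(H_\mu-z)^{-1}\chi$. Taking residues yields $\chi\Ppaz\chi=-\chi\Pi\chi$, so $\rank(\chi\Ppaz\chi)=\rank\Pi$ by (1). It then remains only to show $\rank\Ppaz\le\rank\Pi$; this should follow from $\Rpaz=R_0-R_0V\chi\Rpaz\chi V R_0+\dots$, which expresses $\Ppaz$ as $R_0(\lambda)V(\chi\Ppaz\chi)VR_0(\lambda)$ plus lower-order Laurent terms. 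I expect controlling these higher-order Laurent terms, that is, non-simple poles, to be the main obstacle. I would treat them by expanding both resolvent identities to all orders and noting that every term factors through $\chi\Rpaz_k\chi$, whose ranges are contained in the generalized eigenspace seen through $\chi$.
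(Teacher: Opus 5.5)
Your point (1) is correct and is essentially the paper's argument: injectivity of $\chi$ on $\Ran\Pi$ through the Jordan chain and Proposition \ref{DistortedResolvent}, the same on $\Ran\Pi^*$ via \eqref{PerturbedDistortedAdjoint} and $\sigma_{\overline\mu}=\overline{\sigma_\mu}$, then duality. Note that $(\Pi\chi)^*=\overline{\chi}\Pi^*$, not $\chi\Pi^*$.

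Point (2), however, is not proved. Both of your routes stop exactly at the step that carries the content of the statement.

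First, you never establish that $\chi$ is injective on $\Ran\Ppaz$, and the Jordan chain argument of point (1) does not transfer. Elements of $\Ran\Ppaz$ are outgoing $H^1_\loc$ functions, not in $L^2$ when $\Im\kappa\le 0$, so Proposition \ref{DistortedResolvent} says nothing about them. The paper notes that the last nonzero chain element $u=\Npaz^{m-1}\psi$ vanishes near $\Supp V$ and solves $(\beta-i\alpha\cdot\nabla-\lambda)u=0$ on all of $\R^3$. Hence $(-\Delta-\kappa^2)u=0$, and the paper concludes by unique continuation. Your left identity $\Rpaz=\Rpaz_0-\Rpaz_0V\Rpaz$ would also do it: taking residues expresses any $\psi\in\Ran\Ppaz$ as $-\sum_i R_iV\Npaz^i\psi$, where $R_i$ is the $i$-th Taylor coefficient of $z\mapsto\Rpaz_0(z,\omega_\mu(z))$ at $\lambda$. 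You do not draw this conclusion, though.

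Second, and more seriously, the inequality $\rank\Ppaz\le\rank(\chi\Ppaz\chi)$ is left open. You identify non-simple poles as the obstacle but do not resolve it, and your suggested fix cannot give the sharp count. Inserting $\chi$ into both resolvent identities and using $\chi\Rpaz\chi=\chi(H_\mu-z)^{-1}\chi$ only yields $\Ran\Ppaz\subset\sum_{a<I}R_aV(\chi\Ran\Pi)$. The dimension of that space is a priori bounded by $I\cdot\rank\Pi$, not by $\rank\Pi$.

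The missing idea is to use the single identity $\Rpaz=\Rpaz_0-\Rpaz V\Rpaz_0$ together with the Laurent form $\sum_{i\le I}\Npaz^{i-1}\Ppaz/(z-\lambda)^i$. This gives $\Ppaz=-\sum_{i<I}\Npaz^i\Ppaz VR_i$, so $\Ran\Ppaz\subset\sum_i\Ran(\Npaz^i\Ppaz V)$. One then shows these ranges lie in $\Ran(\Ppaz\chi)$. With nested cutoffs $V\prec\chi_{I-1}\prec\cdots\prec\chi_1\prec\chi$, one has $\Npaz\Ppaz\chi_1=\Ppaz\chi(H-\lambda)\chi_1$ on $H^1$. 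This extends by density because $\Ran(\Ppaz\chi)$ is finite dimensional, and iterating gives $\Ran(\Npaz^i\Ppaz V)\subset\Ran(\Ppaz\chi)$. Hence $\Ran\Ppaz=\Ran(\Ppaz\chi)$, and with injectivity $\Ran(\chi\Ppaz\chi)=\chi\Ran\Ppaz$ has dimension $\rank\Ppaz$.

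Finally, your route through $\chi\Rpaz(z,\omega_\mu(z))\chi=\chi(H_\mu-z)^{-1}\chi$ also requires $\Supp\chi\subset\{F=0\}$, which the statement does not assume. This is repairable by re-choosing $F$ with Lemma \ref{FChoice}, but it must be said, and even then the same rank inequality remains to be proved.
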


\begin{proof} 
    \textbf{Statement (1).} We first prove that the multiplication by $\chi$ is one-to-one on $\Ran(\Pi)$. Let us recall that the endomorphism
    \[N:\psi\in\Ran(\Pi)\mapsto(H_\mu-\lambda)\psi\in\Ran(\Pi)\]
    is well defined and nilpotent. Let $\psi\in\Ran(\Pi)$ such that $\chi\psi=0$. Let $m$ be the first integer such that $N^m\psi=0$. Assume that $m>0$ so that $N^{m-1}\psi\not=0$. Since $\psi=0$ in a neighborhood of $\Supp V$ then $N\psi$ does so. Iterating, we get $VN^{m-1}\psi=0$. Finally
    \begin{align*}
        (\Dvec_\mu-\lambda)N^{m-1}\psi&=(H_\mu-\lambda)N^{m-1}\psi\\
        &=0
    \end{align*}
    therefore $N^{m-1}\psi$ is an eigenvector $\Dvec_\mu$ which is a contradiction thanks to Proposition \ref{DistortedResolvent}. We conclude that $m=0$ i.e $\psi=0$. 
    
    Similarly the multiplication by $\overline{\chi}$ is one-to-one on $\Ran(\Pi^*)$. Indeed $\overline{\lambda}$ is an isolated eigenvalue of finite multiplicity of $H_\mu^*$ with associated Riesz-projection $\Pi^*$, \eqref{PerturbedDistortedAdjoint} holds and $V^*\prec\overline{\chi}$ therefore the same proof applies. We conclude that $\overline{\chi}\Pi^*$ and $\Pi^*$ have same rank.  Since a bounded operator and its adjoint have same rank we deduce $\Ran(\Pi\chi)=\Ran(\Pi)$.
    
    Combining these two properties, we conclude that the multiplication by $\chi$ is invertible from $\Ran(\Pi)$ to $\Ran(\chi\Pi\chi)$ therefore the conclusion is straightforward.
    
    \textbf{Statement (2).} We first show that the multiplication by $\chi$ is one-to-one on $\Ran(\Ppaz)$ following similar arguments. The endomorphism
    \[\Npaz :\psi\in\Ran(\Ppaz)\mapsto(\beta-i\alpha\cdot\nabla+V-\lambda)\psi\in\Ran(\Ppaz)\]
    is well defined and nilpotent. Let $\psi\in\Ran(\Ppaz)$ such that $\chi\psi=0$. Assume that the first integer $m$ such that $\Npaz^m\psi=0$ is positive. Then $\Npaz^{m-1}\psi=0$ in a neighborhood of $\Supp V$ and $(\beta-i\alpha\cdot\nabla-\lambda)\Npaz^{m-1}\psi=0$. This yields $(-\Delta-\kappa^2)\Npaz^{m-1}\psi=0$ and $\Npaz^{m-1}\psi=0$ by unique continuation \cite[Theorem XIII.63]{ReSi72}. Eventually $\psi=0$.
    
    We recall that if $I\geq 1$ is the index of $\Npaz$ then
    \[z\mapsto \Rpaz(z,\omega_\mu(z))-\sum_{i=1}^I\frac{\Npaz^{i-1}\Ppaz}{(z-\lambda)^i}\]
    has a holomorphic continuation from an open neighbourhood of $\lambda$ to $\Lcal(L^2_\comp,H^1_\loc)$. Moreover there exists a holomorphic function from $\Mpaz$ to $\Lcal(L^2_\comp,H^1_\loc)$ denoted $\Rpaz_0$ such that for $(z,\omega)\in\Mpaz$ with $\Im\omega>0$ one has $z\not\in\sigma_0$ and on $L^2_\comp$
    \begin{equation}\label{FreeResolventContinuation}
        \Rpaz_0(z,\omega)=(\Dvec-z)^{-1}.
    \end{equation}
    For all $(z,\omega)\in\Mpaz\setminus\Res(H)$ one has
    \begin{equation}\label{ResolventEquation}
        \Rpaz(z,\omega)=\Rpaz_0(z,\omega)-\Rpaz(z,\omega)V\Rpaz_0(z,\omega).
    \end{equation}
    Indeed \eqref{ResolventEquation} is standard when $\Im\omega>0$ thanks to \eqref{ResolventContinuation} and \eqref{FreeResolventContinuation} and then extended by analytic continuation. Setting $\omega=\omega_\mu(z)$ in \eqref{ResolventEquation} and comparing the residues at $\lambda$ we get
    \begin{equation}\label{ResiduesComparison}
        \Ppaz=-\sum_{i=0}^{I-1}\Npaz ^i\Ppaz V\dfrac{d^i}{dz^i}_{\tq z=\lambda} \Rpaz_0(z,\omega_\mu(z)).
    \end{equation}
    Let $\chi_1\in C^1_c$ such that $V\prec\chi_1\prec\chi$. On $ H^1$ one has 
    \[\Npaz \Ppaz\chi_1=\Ppaz(H-\lambda)\chi_1=\Ppaz\chi(H-\lambda)\chi_1\]
    therefore by density
    \[\Ran(\Npaz \Ppaz\chi_1)\subset\Ran(\Ppaz\chi).\]
    Iterating, we construct $\chi_2,...,\chi_{I-1}\in C^1_c$ such that $V\prec\chi_{I-1}\prec...\prec\chi_1\prec\chi$ and
    \[\Ran(\Npaz ^{I-1}\Ppaz\chi_{I-1})\subset...\subset\Ran(\Npaz\Ppaz\chi_1)\subset\Ran(\Ppaz\chi).\]
    In particular $\Ran(\Npaz^i\Ppaz V)\subset\Ran(\Ppaz\chi)$ therefore \eqref{ResiduesComparison} yields $\Ran(\Ppaz)=\Ran(\Ppaz\chi)$. We then conclude as for the first statement. 
\end{proof}

Finally, we are ready for the proof of Theorem \ref{LinkResonanceEigenvalue}. 

\begin{proof}
    Let $\chi\in C^1_c$ supported in $\lcb F=0\rcb$ such that $V\prec\chi$. Let us show that for all $z\in\C\setminus\sigma_\mu$ such that $(z,\omega_\mu(z))\not\in\Res(H)$ and $z\not\in\Sp_\disc(H_\mu)$
    \begin{equation}\label{LinkDistortedOutgoingResolvent}
        \chi\Rpaz(z,\omega_\mu(z))\chi=\chi(H_\mu-z)^{-1}\chi.
    \end{equation}
    If $z\in(-1,1)$ then $\omega_\mu(z)$ is a square root of $z^2-1\leq 0$ therefore $\Re\omega_\mu(z)=0$ and 
    \begin{align*}
        0&<\Im(1+\mu)\omega_\mu(z)\\
        &=(1+\Re\mu)\Im\omega_\mu(z)
    \end{align*}
    hence $\Im\omega_\mu(z)>0$ since $1+\Re\mu>0$. Thanks to \eqref{ResolventContinuation} and \eqref{CutoffDistorted} we conclude that \eqref{LinkDistortedOutgoingResolvent} holds first when $z\in(-1,1)$ and then $z\in\C\setminus\sigma_\mu$ by analytic continuation.
    
    The combination of \eqref{LinkDistortedOutgoingResolvent} and Proposition \ref{TruncatedRanks} ends the proof of Theorem \ref{LinkResonanceEigenvalue}.
\end{proof} 

\begin{figure}[htbp]
    \centering
    \begin{tikzpicture}[scale=1.5]
        \fill[
            pattern=north east lines,
            pattern color=gray,
            even odd rule
        ]
        (-3,-2) rectangle (3,2)
        
        (-3,0)
        -- (-1,0)
        .. controls (-1,0.8) and (-1.8,1.0) .. (-3,1.2)
        -- cycle
        
        (1,0)
        -- (3,0)
        -- (3,-1.2)
        .. controls (1.8,-1.0) and (1,-0.8) .. (1,0)
        -- cycle;
        
        \draw[->] (-3,0) -- (3,0) node[right] {$\Re z$};
        \draw[->] (0,-2) -- (0,2) node[above] {$\Im z$};
        
        \draw[very thick, blue] (-3,0) -- (-1,0);
        \draw[very thick, blue] (1,0) -- (3,0);
        
        \node[above, blue] at (2.2,0) {$\sigma_0$};
        
        \draw[thick,red,-]
          (1,0)
          .. controls (1,-0.8) and (1.8,-1.0)
          .. (3,-1.2);
        
        \draw[thick,red,-]
          (-1,0)
          .. controls (-1,0.8) and (-1.8,1.0)
          .. (-3,1.2);
        
        \node[red] at (2.4,-1.3) {$\sigma_\mu$};
        
        \fill (-1,0) circle (1.5pt);
        \fill (1,0) circle (1.5pt);
        
        \node[below] at (-1,0) {$-1$};
        \node[above] at (1,0) {$1$};
        
        \node at (1,1.0) {$C_\mu$};

        \fill[blue] (-0.4,0.6) circle (1.5pt);
        \fill[blue] (0.3,1.1) circle (1.5pt);
        \fill[blue] (-1.8,-0.7) circle (1.5pt);
        \fill[blue] (1,-1) circle (1.5pt);
        \fill[blue] (0.5,0) circle (1.5pt);
        \fill[blue] (-0.3,0) circle (1.5pt);
        
        \fill[red] (-1.7,0.7) circle (1.5pt);
        \fill[red] (-2.2,0.4) circle (1.5pt);
        \fill[red] (-2.7,0.2) circle (1.5pt);

        \fill[red] (1.4,-0.3) circle (1.5pt);
        \fill[red] (2.2,-0.8) circle (1.5pt);
        \fill[red] (2.7,-0.9) circle (1.5pt);
        
    \end{tikzpicture}
    \caption{The colored points describe $\lambda\in\Sp_\disc(H_\mu)$. In blue, $\lambda$ is an eigenvalue of $H$. In red, $(\lambda,\kappa)$ is a resonance of $H$ with $\Im(1+\mu)\kappa>0$.}
    \label{SpectralPicture}
\end{figure}

\subsection{Application to resonance multiplicity.} Until the end of the section, we wish to apply Theorem \ref{LinkResonanceEigenvalue} to establish the even multiplicity of resonances. Since resonance multiplicities are intrinsically defined, the conclusion should not depend on the distortion used to realize resonances as eigenvalues. The next two Lemmas successively remove the dependence on the distortion parameter and on the deformation.

Fix $F$ and recall that $L=\lV\nabla F\rV_{L^\infty}\geq 1$. Theorem \ref{LinkResonanceEigenvalue} applies to resonances $(\lambda,\kappa)\in\Mpaz$ satisfying $\Im(1+\mu)\kappa>0$ for a given $\mu\in D(0,1/L)$. The next Lemma characterizes the resonances for which such a $\mu$ exists. 

\begin{lemma}\label{CatchedResonances}
    Given $\ell\geq1$ and $\kappa\in\C$, there exists $\mu\in D(0,1/\ell)$ such that $\Im(1+\mu)\kappa>0$ if and only if $\Im\kappa>0$ or $\lv\Re\kappa\rv>\sqrt{\ell^2-1}\lv\Im\kappa\rv$.
\end{lemma}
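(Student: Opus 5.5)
This is an elementary computation in the complex variable $\mu=s+it$ with $s^2+t^2<1/\ell^2$. Writing $\kappa=a+ib$, we have
\[\Im\big((1+\mu)\kappa\big)=(1+s)b+ta .\]
The question is therefore whether the affine function $g(s,t):=(1+s)b+ta$ takes a positive value on the open disk $D(0,1/\ell)$. Since $g$ is affine and the disk is open and convex, $g$ is positive somewhere on the disk if and only if its supremum over the closed disk is strictly positive. The only exception would be a supremum attained only on the boundary, but for an open disk the supremum is approached from the interior, so strict positivity of the supremum suffices.

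The supremum of a linear form over a disk is explicit:
\[\sup_{s^2+t^2<1/\ell^2} g(s,t)=b+\frac{1}{\ell}\sqrt{a^2+b^2},\]
by Cauchy--Schwarz applied to $(s,t)\cdot(b,a)$. It is not attained, but it is the supremum, so the condition becomes
\[b+\ell^{-1}\lv\kappa\rv>0 .\]
We split into cases according to the sign of $b=\Im\kappa$.
\begin{itemize}
\item If $b>0$, the condition holds trivially; indeed $\mu=0$ already works.
\item If $b=0$, the condition reads $\lv a\rv>0$, which agrees with $\lv\Re\kappa\rv>\sqrt{\ell^2-1}\cdot 0$.
\item If $b<0$, the condition $\lv\kappa\rv>\ell\lv b\rv$ is equivalent, after squaring both nonnegative sides, to $a^2+b^2>\ell^2b^2$, that is $a^2>(\ell^2-1)b^2$, that is $\lv a\rv>\sqrt{\ell^2-1}\lv b\rv$.
\end{itemize}
The hypothesis $\ell\geq1$ is used to make sense of $\sqrt{\ell^2-1}$.

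Combining the three cases gives exactly the stated dichotomy: either $\Im\kappa>0$, or $\lv\Re\kappa\rv>\sqrt{\ell^2-1}\lv\Im\kappa\rv$. When $\Im\kappa>0$ the second condition may or may not hold, but this is harmless because the statement is a disjunction.

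The only point requiring care is the strictness of the inequality coming from the open disk. For the converse direction, given $b+\ell^{-1}\lv\kappa\rv>0$, I would exhibit an explicit $\mu$: take
\[\mu=r\,\frac{b+ia}{\lv\kappa\rv},\qquad r<1/\ell \text{ close to } 1/\ell .\]
Then $g=b+r\lv\kappa\rv>0$ for $r$ near $1/\ell$. Here $\kappa\neq0$ automatically, since $\kappa=0$ gives $g\equiv0$ and fails both conditions. For the direct direction, the bound $g\leq b+\lv\mu\rv\lv\kappa\rv<b+\ell^{-1}\lv\kappa\rv$ when $\kappa\neq0$ shows that positivity of $g$ forces the strict inequality. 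I do not expect any real obstacle beyond this bookkeeping.
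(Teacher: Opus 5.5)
Your proof is correct and is essentially the paper's argument: the direct implication rests on $\Im(\mu\kappa)\le|\mu||\kappa|<|\kappa|/\ell$, and for the converse your witness $\mu=r(b+ia)/|\kappa|$ is exactly the paper's choice $\mu=ir\overline{\kappa}/|\kappa|$ with $r\uparrow 1/\ell$. The only difference is packaging (you compute the supremum $\Im\kappa+|\kappa|/\ell$ and split on the sign of $\Im\kappa$, the paper argues by contraposition), and your handling of strictness is slightly cleaner, since squaring $|\kappa|>\ell|\Im\kappa|$ gives a strict inequality where the paper writes $\geq$.
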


\begin{proof}
    Assume that $\Im(1+\mu)\kappa>0$ for some $\mu\in D(0,1/ \ell)$. In particular $\lv\kappa\rv>- \ell\Im\kappa$ therefore if $\Im\kappa\leq 0$ then $(\Im\kappa)^2+(\Re\kappa)^2\geq  \ell^2(\Im\kappa)^2$. This provides the direct implication. 
    
    Conversely, we argue by contraposition. Assume that $\Im(1+\mu)\kappa\leq0$ for all $\mu\in D(0,1/ \ell)$. We can assume $\kappa\not=0$ and choose $\mu=i(1/ \ell-\varepsilon)\overline{\kappa}/\lv\kappa\rv$ for some $\varepsilon\in(0,1/ \ell)$ in such a way that $\Im\kappa+(1/ \ell-\varepsilon)\lv\kappa\rv\leq0$. Letting $\varepsilon\to0$ we conclude $ \ell\Im\kappa\leq-\lv\kappa\rv$. In particular $\Im\kappa\leq 0$ and $(\Im\kappa)^2+(\Re\kappa)^2\leq  \ell^2(\Im\kappa)^2$. This ends the proof of the Lemma.
\end{proof}

The following lemma shows that the deformation can be chosen with Lipschitz constant arbitrarily close to the optimal value one.

\begin{lemma}\label{FChoice}
    For all compact set $K\subset\R^3$ and $\ell>1$ there exists $F\in C^2(\R^3,\R^3)$ satisfying
    \begin{equation*}
        \left\{
        \begin{array}{ll}
            F=0 \mbox{ on a neighborhood of } K\\

            F(x)=x \mbox{ if } \lv x\rv\gg 1\\
            
            \lV \nabla F\rV_{L^\infty}\leq\ell
        \end{array}
        \right.
    \end{equation*}
\end{lemma}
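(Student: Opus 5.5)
The idea is to look for $F$ of the radial form $F(x)=g(\lv x\rv)\frac{x}{\lv x\rv}$, with $g:[0,\infty)\to[0,\infty)$ a $C^2$ profile. We take $g=0$ on $[0,R_0]$, where $K\subset B(0,R_0-1)$, and $g(r)=r$ for $r\geq R_1$, with $R_1$ large. Since $g$ vanishes near $0$, $F$ is $C^2$ near the origin. On the rest of $\R^3$, $F$ is $C^2$ whenever $g$ is, and $F=0$ on the neighbourhood $B(0,R_0)$ of $K$. It remains to make $\lV\nabla F\rV_{L^\infty}\leq\ell$.

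For the gradient bound we compute
\[\nabla F(x)=g'(r)\,\hat x\hat x^\Tsf+\frac{g(r)}{r}\lp I_3-\hat x\hat x^\Tsf\rp,\qquad r=\lv x\rv,\ \hat x=x/r.\]
This is symmetric with eigenvalues $g'(r)$ (radial) and $g(r)/r$ (tangential, twice), so its operator norm is $\max(\lv g'(r)\rv,g(r)/r)$. It is therefore enough to build $g$ with $0\leq g'\leq\ell$ everywhere, since then $g(r)=\int_0^rg'\leq\ell r$ gives $g(r)/r\leq\ell$ as well. We may also use the Frobenius or any other fixed matrix norm, since the eigenvalue bound controls the operator norm and the paper's $L$ only needs to be the norm appearing in Neumann's criterion.

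To construct $g$, we let $g'$ be a $C^1$ function $h\geq 0$ with $h=0$ on $[0,R_0]$, $h=1$ on $[R_1,\infty)$ and $h\leq\ell$, subject to the integral constraint $\int_0^{R_1}h=R_1$. This constraint is exactly what makes $g(r)=\int_0^rh$ equal to $r$ for $r\geq R_1$. Concretely, $h$ rises smoothly from $0$ to a plateau value $c\in(1,\ell)$, stays there, then descends smoothly to $1$. The transition layers have fixed width, say $1$, and cost a bounded amount of integral. The mass deficit accumulated on $[0,R_0]$ is $R_0$, and the plateau recovers $(c-1)$ per unit length. Taking the plateau length of order $R_0/(c-1)$ and choosing $R_1$ accordingly (by continuity, as an intermediate-value adjustment of the plateau length) makes the integral exactly $R_1$. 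Then $g\in C^2$ and $0\leq g'\leq c<\ell$.

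The main obstacle is only this balancing of the integral constraint while keeping $h\leq\ell$. It is the reason $\ell>1$ is needed: with $\ell=1$ we could not catch up on the lost mass. The rest is routine.
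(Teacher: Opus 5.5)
Your argument is correct. It uses the same radial ansatz as the paper: your $g(r)\,x/|x|$ is the paper's $f(|x|)\,x$ with $g=rf$. Where you differ is in how you control $\nabla F$ and how you build the profile.

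The paper only uses the triangle inequality $\|\nabla F(x)\|\le f+|rf'|\le 1+\sup_r|rf'(r)|$. It then makes $rf'$ uniformly small by composing a fixed cutoff with the variable $\log(r/r_0)/\log(r_1/r_0)$. This gives an explicit formula and needs no balancing, since $f$ just interpolates from $0$ to $1$. The cost is that $|rf'|\le\ell-1$ forces $r_1\ge r_0e^{1/(\ell-1)}$.

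You instead compute $\|\nabla F\|=\max(|g'|,g/r)$ exactly. This reduces the problem to $0\le g'\le\ell$ and makes clear why $\ell>1$ is needed. The price is the mass balance $\int_0^{R_1}h=R_1$, and that step is sound. With fixed transition layers, the defect $\int_0^{R_1}h-R_1$ is affine in the plateau length $P$ with slope $c-1>0$. At $P=0$ it is at most $2c-2-R_0$, which is $\le 0$ once $c\le 1+R_0/2$. Your construction then gives $R_1$ of order $cR_0/(c-1)$. This is essentially the optimal size $\ell R_0/(\ell-1)$ forced by the Lipschitz bound, though that is irrelevant for the paper, which only needs existence.

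One aside is wrong. The norm cannot be the Frobenius norm or an arbitrary matrix norm, because $\nabla F=I_3$ near infinity and $\|I_3\|_F=\sqrt{3}$, so the statement would fail for $\ell<\sqrt{3}$. It has to be the operator norm, which is what your eigenvalue argument and the paper's use of $L$ as a Lipschitz constant actually require.
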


\begin{proof}
    Let $r_0>0$ such that $K\subset B(0,r_0)$. We set $F(x)=f(\lv x\rv)x$ where $f:\R\rightarrow[0,1]$ is $C^2$ and
    \begin{equation*}
        f(r):=\left\{
        \begin{array}{ll}
            0 &\mbox{ if } r\in (-\infty,r_0)\\
            
            1 &\mbox{ if } r\in(r_1,+\infty)
        \end{array}.
        \right.
    \end{equation*}
    for some $r_1>0$ chosen later. We compute for $x\not=0$
    \begin{equation*}
        \nabla F(x)=f(\lv x\rv)I_3+\frac{f'(\lv x\rv)}{\lv x\rv}xx^\Tsf
    \end{equation*}
    therefore 
    \begin{equation*}
        \lV\nabla F\rV_{L^\infty}\leq 1+\sup_{r\geq0}\lv rf'(r)\rv.
    \end{equation*}
    The problem reduces to the question : can we define $f$ on $[r_0,r_1]$ in such a way that 
    \begin{equation*}
        \sup_{r\geq0}\lv rf'(r)\rv\leq\ell-1\mbox{ ?}
    \end{equation*}
    This constraint indicates a logarithmic behaviour. To preserve smoothness, we introduce a $C^2-$smooth function $g:\R\rightarrow[0,1]$ such that
    \begin{equation*}
         g(r):=\left\{
        \begin{array}{ll}
            0 &\mbox{ if } r\in (-\infty,1/4)\\
            
            1 &\mbox{ if } r\in(3/4,+\infty)
        \end{array},
        \right.
    \end{equation*}
    and for $r\in[r_0,r_1]$ we set
    \begin{equation*}
        f(r):= g\lp\frac{\log r-\log r_0}{\log r_1-\log r_0}\rp.
    \end{equation*}
    That way, we get 
    \begin{equation*}
        f'(r)=\left\{
        \begin{array}{ll}
            0 &\mbox{ if } r\in (-\infty,r_0)\cup(r_1,+\infty)\\
            
            \dfrac{1}{r(\log r_1-\log r_0)} g'\lp\dfrac{\log r-\log r_0}{\log r_1-\log r_0}\rp &\mbox{ if } r\in[r_0,r_1]
        \end{array}
        \right.
    \end{equation*}
    therefore 
    \begin{equation}\label{LogarithmicUpper}
        \sup_{r\geq0}\lv rf'(r)\rv\leq\frac{\lV g'\rV_{L^\infty}}{\log r_1-\log r_0}.
    \end{equation}
    It remains to take $r_1$ large enough so that the r.h.s of \eqref{LogarithmicUpper} is smaller than $\ell-1>0$.
\end{proof}

We introduce the time-reversal operator, for $\psi\in L^2$
\begin{equation*}
    T\psi:=-i\gamma_5\alpha_2\overline{\psi}.
\end{equation*} 
where $\gamma_5:=-i\alpha_1\alpha_2\alpha_3$. We recall that $T$ is anti-linear, anti-unitary and satisfies $T^2=-1$. 

\begin{corollary}\label{KramersDegeneracy}
    Assume that $TV=V^*T$. Every resonance $(\lambda,\kappa)\in\Mpaz$ of $H$ such that $\kappa\not\in i(-\infty,0]$ has even multiplicity. 
\end{corollary}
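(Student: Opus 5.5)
Given a resonance $(\lambda,\kappa)$ with $\kappa\notin i(-\infty,0]$, the plan is to realize it as a discrete eigenvalue of a suitable distorted operator $H_\mu$ via Theorem \ref{LinkResonanceEigenvalue}. We then show that the algebraic multiplicity of that eigenvalue is even, by an antiunitary Kramers argument coupling $H_\mu$ with its adjoint.

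\textbf{Step 1 (choice of distortion).} If $\Im\kappa>0$, any $\mu$ works (e.g. $\mu=0$ with any admissible $F$). Otherwise $\Im\kappa\le0$ and $\kappa\notin i(-\infty,0]$ force $\Re\kappa\neq0$. Choose $\ell>1$ close enough to $1$ that $\lv\Re\kappa\rv>\sqrt{\ell^2-1}\lv\Im\kappa\rv$. Lemma \ref{FChoice} with $K=\Supp V$ gives $F$ with $L\le\ell$, vanishing near $\Supp V$. Lemma \ref{CatchedResonances} then gives $\mu\in D(0,1/\ell)\subset D(0,1/L)$ with $\Im(1+\mu)\kappa>0$. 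By Theorem \ref{LinkResonanceEigenvalue}, $\lambda\in\Sp_\disc(H_\mu)$, and the multiplicity of the resonance equals $\rank\Pi$, where $\Pi$ is the Riesz projection of $H_\mu$ at $\lambda$.

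\textbf{Step 2 (intertwining).} The key identity should be $TH_\mu=H_{\overline\mu}^*T=H_\mu^{*}$-type relation; precisely, $T\Dvec_\mu=\Dvec_{\overline\mu}T$ on $H^1$. For real $\mu$, $T$ commutes with $\Dvec$ because $T$ anticommutes appropriately with $-i\alpha_j\partial_j$ and commutes with $\beta$. It also commutes with $U_\mu$, since $U_\mu$ has real coefficients. Complex conjugation in $T$ sends the coefficients $J_\mu,a_\mu$ to $J_{\overline\mu},a_{\overline\mu}$, so the identity follows. Combined with $TV=V^*T$ and \eqref{PerturbedDistortedAdjoint}, this gives $TH_\mu=H_\mu^*T$ on $H^1$. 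Hence $T(H_\mu-z)^{-1}=(H_\mu^*-\overline z)^{-1}T$, and integrating over a small circle yields $T\Pi=\Pi^*T$. Since $T\Pi T^{-1}=\Pi^*$, the map $T$ sends $\Ran\Pi$ onto $\Ran\Pi^*$.

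\textbf{Step 3 (evenness).} Define on $\Ran\Pi$ the sesquilinear form $b(\phi,\psi):=\la T\phi,\psi\ra_{L^2}$. Using antiunitarity and $T^2=-1$,
\[\la T\phi,\psi\ra=\la T\psi,T^2\phi\ra=-\la T\psi,\phi\ra,\]
so $b$ is skew in the bilinear sense: $\la T\phi,\psi\ra=-\la T\psi,\phi\ra$. Composing with the antilinear $T$, the map $B(\phi,\psi)=\la T\phi,\psi\ra$ is a complex bilinear form on $\Ran\Pi$ and it is antisymmetric.

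Nondegeneracy is the main obstacle. If $\la T\phi,\psi\ra=0$ for all $\psi\in\Ran\Pi$, then $T\phi\in\Ran\Pi^*$ is orthogonal to $\Ran\Pi$. But $\Ran\Pi^*\cap(\Ran\Pi)^\perp=\Ran\Pi^*\cap\ker\Pi^*=\{0\}$, since $\Pi^*$ is a projection. Hence $T\phi=0$, so $\phi=0$.

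A nondegenerate antisymmetric bilinear form exists only on even-dimensional spaces. Therefore $\rank\Pi$ is even, and so is the resonance multiplicity. The delicate points to check are the exact algebraic commutation of $T$ with $\alpha_j$, $\beta$ and $i$, and that the domain identity $TH^1=H^1$ makes the resolvent intertwining legitimate.
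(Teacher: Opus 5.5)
Your proposal is correct and follows the paper's proof essentially step for step: the same choice of $F$ via Lemma \ref{FChoice} and of $\mu$ via Lemma \ref{CatchedResonances}, and the same intertwining $T\Pi=\Pi^*T$ derived from $TH_\mu=H_\mu^*T$. It also uses the same nondegenerate antisymmetric form $\la T\phi,\psi\ra$ on $\Ran(\Pi)$, which is the paper's matrix $M$, with nondegeneracy coming from $T\phi=T\Pi\phi=\Pi^*T\phi=0$. The only slip is the aside that ``any $\mu$ works'' when $\Im\kappa>0$: this fails in general, since $\Im(\mu\kappa)$ can be negative and exceed $\Im\kappa$ in modulus. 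It is harmless, because the choice $\mu=0$ you actually use is exactly the paper's.
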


\begin{remark}
    \begin{itemize}
        \item A resonance $(\lambda,\kappa)$ with $\kappa\in i(-\infty,0]$ is often called anti-bound state.
        
        \item The symmetry assumption includes electric potentials $V=vI_4$ ($v:\R^3\rightarrow\R$) or Lorentz scalar potentials $V=m\beta$ ($m:\R^3\to\R$) since $T\beta=\beta T$. On the other hand, magnetic perturbations $V=-\alpha\cdot A$ ($A:\R^3\rightarrow\R^3$) break time-reversal symmetry since $T\alpha_j=-\alpha_j T$.

        \item Corollary \ref{KramersDegeneracy} yields the even multiplicity of the eigenvalues of the (possibly non-self-adjoint) Dirac operator $H$.
    \end{itemize} 
\end{remark}

\begin{proof}
    If $\Im\kappa>0$ then $\Im(1+\mu)\kappa>0$ with $\mu=0$. If $\Im\kappa\leq 0$ then $\lv\Re\kappa\rv>\sqrt{\ell^2-1}\lv\Im\kappa\rv$ for some $\ell>1$ since $\kappa\not\in i(-\infty,0]$. We choose $F$ associated to $K=\Supp V$ and $\ell$ as in Lemma \ref{FChoice}. Thanks to Lemma \ref{CatchedResonances} there exists $\mu\in D(0,1/L)$ such that $\Im(1+\mu)\kappa>0$. Theorem \ref{LinkResonanceEigenvalue} ensures that $\lambda\in\Sp_\disc(H_\mu)$ and the multiplicity of the resonance $(\lambda,\kappa)$ is the rank of the associated Riesz-projection $\Pi$. Using $TV=V^*T$, $T\Dvec_\mu=\Dvec_{\overline{\mu}}T$ and \eqref{PerturbedDistortedAdjoint} we get 
    $(H_\mu^*-\overline{z})^{-1}T=T(H_\mu-z)^{-1}$ if $0<\lv z-\lambda\rv\ll1$ and hence
    \begin{equation}\label{TimeReversalResonant}
        \Pi^*T=T\Pi
    \end{equation}
    Pick a basis $(\psi_1,...,\psi_r)$ of $\Ran(\Pi)$ and set $M:=(\la \psi_i,T \psi_j\ra)_{ij}\in\C^{r\times r}$. Thanks to the properties of $T$ it is clear that $M^\Tsf=-M$. In particular $\det(M)=(-1)^r\det(M)$ therefore if $M$ is invertible then $r$ is even. Pick $\nu\in\C^r$ such that $M\nu=0$. With $\psi:=\sum_{j=1}^r\overline{\nu_j}\psi_j$, this exactly means that $T\psi$ is orthogonal to $\Ran(\Pi)$ and consequently $\Pi^*T\psi=0$. Thanks to \eqref{TimeReversalResonant} we deduce that 
    $T\Pi\psi=0$. Since $T$ is one-to-one and $\psi\in\Ran(\Pi)$ we conclude that $\psi=0$. Eventually $\nu=0$ and the expected invertibility is proven.
\end{proof}

\section{Distorted exterior MIT bag model}\label{Section4}

In this section, we extend the previous strategy to boundary value problems. In the Dirac setting, the canonical example is the exterior MIT bag model. Let $\Omega\subset\R^3$ be open, $ C^2$-smooth and such that $\R^3\setminus\Omega$ is bounded. The outward pointing normal to the boundary of $\Omega$ is denoted $n$ (see Figure \ref{FigureObstacle}).

\begin{figure}[htbp]
    \centering
    \begin{tikzpicture}
        \fill[gray!20] 
          (0,0) 
          .. controls (1,2.5) and (3,3) .. (4,2)
          .. controls (5,1) and (6,3) .. (7,2.5)
          .. controls (8,2) and (8,-2) .. (7,-2.5)
          .. controls (6,-3) and (5,-1.5) .. (4,-2)
          .. controls (3,-2.5) and (2,-3) .. (1,-2.5)
          .. controls (0,-2) and (-0.2,-0.5) .. (0,0);
        
        \draw[thick] 
          (0,0) 
          .. controls (1,2.5) and (3,3) .. (4,2)
          .. controls (5,1) and (6,3) .. (7,2.5)
          .. controls (8,2) and (8,-2) .. (7,-2.5)
          .. controls (6,-3) and (5,-1.5) .. (4,-2)
          .. controls (3,-2.5) and (2,-3) .. (1,-2.5)
          .. controls (0,-2) and (-0.2,-0.5) .. (0,0);

        \draw[->] (7.15,2.4) -- (6.5,1.7);
        
        \node at (-1.2,1) {$\Omega$};
        \node at (3.7,0) {$\R^3\setminus\Omega$};
        \node at (4.2,2.25) {$\partial\Omega$};
        \node at (7.0,1.8) {$n$};
    \end{tikzpicture}
    \caption{The exterior domain.}
    \label{FigureObstacle}
\end{figure}

We are interested in the Dirac operator 
\begin{equation}\label{MIT}
    \Dvec^\Omega:=\beta-i\alpha\cdot\nabla \mbox{ with domain } \Dfrak:=\lcb\psi\in H^1(\Omega)\tq (-\imath\beta\alpha\cdot n)\psi_{\tq\partial\Omega}=\psi_{\tq\partial\Omega}\rcb.
\end{equation}
Notice that $\psi_{\tq\partial\Omega}\in L^2(\partial\Omega)$ denotes the trace of $\psi\in H^1(\Omega)$. We recall that $\Dvec^\Omega$ is self-adjoint in $L^2(\Omega)$ (see \cite[Theorem 3.2]{OuVe18}) and that its spectrum is purely essential and equal to $\sigma_0$ (see \cite[Theorem 3.1]{BeBrZr24}). Also $\Dfrak$ is a Banach space when equipped with $\lV\cdot\rV_{H^1(\Omega)}$. Let us recall the definiton of the resonances of $\Dvec^\Omega$ and their multiplicity (again see \cite{Du25}). There exists a finite meromorphic function from $\Mpaz$ to $\Lcal(L^2_\comp(\overline{\Omega}),\Dfrak_\loc)$ denoted $\Rpaz$ such that if $(z,\omega)\in\Mpaz$ and $\Im\omega>0$ then $(z,\omega)$ is not a pole of $\Rpaz$, $z\not\in\Sp(\Dvec^\Omega)$ and on $L^2_\comp(\overline{\Omega})$
\begin{equation}\label{MITResolventContinuation}
    \Rpaz(z,\omega)=(\Dvec^\Omega-z)^{-1}    
\end{equation}
A resonance of $\Dvec^\Omega$ is a pole $(\lambda,\kappa)$ of $\Rpaz$. We always have $\kappa\not=0$ and its multiplicity is the rank of the residue of $\Rpaz$ in any local chart defined near $(\lambda,\kappa)$. We denote by $\Res(\Dvec^\Omega)$ the resonance set of $\Dvec^\Omega$.

We define the distorted exterior MIT bag model by replacing $\nabla$ by $\nabla_\mu$ in \eqref{MIT} 
\begin{equation*}\label{DistortedMIT}
    \Dvec_\mu^\Omega:=\beta-i\alpha\cdot\nabla_\mu \mbox{ with domain } \Dfrak.
\end{equation*}
We assume in all this section that $F=0$ on a neighborhood of $\R^3\setminus\Omega$.

The goal of this section is to prove the
\begin{theorem}\label{MITLinkResonanceEigenvalue}
     Let $\mu\in D(0,1/L)$ and $(\lambda,\kappa)\in\Mpaz$ with $\Im(1+\mu)\kappa>0$. Then $(\lambda,\kappa)\in\Res(\Dvec^\Omega)$ if and only if $\lambda\in\Sp_\disc(\Dvec^\Omega_\mu)$, with the same multiplicity.
\end{theorem}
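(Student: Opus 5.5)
We follow the same route as for Theorem \ref{LinkResonanceEigenvalue}, replacing the potential by the boundary. The main change is how the interaction is localized. Instead of the resolvent identity with $V$, we use a gluing (parametrix) construction. Fix $\chi\in C^2_c(\R^3)$, real-valued, with $\chi=1$ near $\R^3\setminus\Omega$ and $\Supp\chi\subset\{F=0\}$. Let $\chi_1,\chi_2$ be further cutoffs of the same kind with $\chi\prec\chi_1\prec\chi_2$. Also fix $z_0\in\C$ with $|\Im z_0|\gg1$; since $\Dvec^\Omega$ is self-adjoint, $z_0\notin\Sp(\Dvec^\Omega)$. For $z\in\C\setminus\sigma_\mu$ we set
\[
Q_\mu(z):=\chi_1(\Dvec^\Omega-z_0)^{-1}\chi+(1-\chi_2)(\Dvec_\mu-z)^{-1}(1-\chi),
\]
where $(1-\chi_2)$ and $(1-\chi)$ are viewed as extension by zero and restriction between $\Omega$ and $\R^3$. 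The first term maps into $\Dfrak$. The second maps into $H^1$ functions vanishing near $\partial\Omega$, hence also into $\Dfrak$.

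Computing $(\Dvec^\Omega_\mu-z)Q_\mu(z)$, we use that $\Dvec^\Omega_\mu=\Dvec^\Omega$ on $\Supp\chi_1$ because $F=0$ there. This gives $1+K_\mu(z)$ with
\[
K_\mu(z)=-i(\alpha\cdot\nabla\chi_1)(\Dvec^\Omega-z_0)^{-1}\chi+(z_0-z)\chi_1(\Dvec^\Omega-z_0)^{-1}\chi+i(\alpha\cdot\nabla_\mu\chi_2)(\Dvec_\mu-z)^{-1}(1-\chi).
\]
Each term has a compactly supported cutoff in front of a map into $H^1$, so by Rellich--Kondrachov $K_\mu(z)$ is compact. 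By Proposition \ref{DistortedResolvent} it is analytic in $z\in\C\setminus\sigma_\mu$. To get invertibility somewhere we take $z=z_0$ itself and use Proposition \ref{DistortedResolvent} again. Then $K_\mu(z_0)$ reduces to two commutator terms. The second of these equals $\nabla\chi_2(\Dvec-z_0)^{-1}(1-\chi)$ by \eqref{CutoffFreeDistorted} applied with a cutoff covering $\Supp\nabla\chi_2$, and is therefore $O(|\Im z_0|^{-1})$. The first is $O(|\Im z_0|^{-1})$ by the self-adjointness of $\Dvec^\Omega$. Hence $1+K_\mu(z_0)$ is invertible for $|\Im z_0|$ large, and analytic Fredholm theory yields a finite meromorphic right inverse.

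For a left inverse we run the same argument with the adjoint. We expect the main obstacle to lie here, in proving the analogue of Proposition \ref{FreeDistortedAdjoint}: $(\Dvec^\Omega_\mu)^*=\Dvec^\Omega_{\overline\mu}$ with domain $\Dfrak$. Our plan is to localize with $\chi$. Near the boundary the operator is the undistorted MIT operator, whose adjoint is known from \cite{OuVe18}. Away from the boundary we apply the Friedrichs argument of Proposition \ref{FreeDistortedAdjoint} to $(1-\chi)\psi$. The commutator terms are bounded, so this should close. Once the adjoint is identified, the same gluing with $\overline\mu$ gives a meromorphic left inverse. It follows that $\Sp(\Dvec^\Omega_\mu)\setminus\sigma_\mu$ is discrete and consists of eigenvalues of finite multiplicity, as in Proposition \ref{DistortedSpectrum}.

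Next we identify the localized resolvents, which is the analogue of \eqref{CutoffDistorted}. For $\mu$ real, $U_\mu$ commutes with $\chi$ and preserves $\Dfrak$, so $\chi(\Dvec^\Omega_\mu-z)^{-1}\chi=\chi(\Dvec^\Omega-z)^{-1}\chi$. Both sides are analytic in $\mu$: the left side because of the parametrix formula above. Analytic continuation in $\mu$ extends the identity to all $\mu\in D(0,1/L)$ and $z\in C_\mu$ with large $|\Im z|$. We then argue exactly as in the proof of Theorem \ref{LinkResonanceEigenvalue}. Starting from $z\in(-1,1)$, where $\Im\omega_\mu(z)>0$, and using \eqref{MITResolventContinuation}, we get
\[
\chi\Rpaz(z,\omega_\mu(z))\chi=\chi(\Dvec^\Omega_\mu-z)^{-1}\chi
\]
on $\C\setminus\sigma_\mu$ away from the poles. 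This proves the equivalence of resonances and eigenvalues.

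It remains to match multiplicities, via the analogue of Proposition \ref{TruncatedRanks}. For the Riesz projection $\Pi$, injectivity of multiplication by $\chi$ on $\Ran\Pi$ follows as before. If $\chi\psi=0$, then $N^{m-1}\psi$ is an $H^1(\R^3)$ function supported in $\Omega$ away from the boundary, and it would be an eigenfunction of $\Dvec_\mu$, contradicting Proposition \ref{DistortedResolvent}. The same argument applied to $\Pi^*$, using the adjoint identification above, gives $\Ran(\Pi\chi)=\Ran\Pi$. For the residue $\Ppaz$, injectivity follows from unique continuation for $-\Delta-\kappa^2$ in the connected unbounded set $\Omega$. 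In place of \eqref{ResiduesComparison} we use the resolvent identity $\Rpaz=(1-\chi)\Rpaz_0(1-\chi_2)+\Rpaz\,[\text{commutator terms supported in }\Supp\nabla\chi]+\dots$. Taking residues shows that $\Ran\Ppaz$ is generated by $\Ppaz$ composed with compactly supported cutoffs, and the iteration with $\chi_1,\dots,\chi_{I-1}$ then gives $\Ran\Ppaz=\Ran(\Ppaz\chi)$. Together with the identity of localized resolvents, this matches the ranks of the residues and completes the proof of Theorem \ref{MITLinkResonanceEigenvalue}.
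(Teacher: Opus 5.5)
\paragraph{Verdict.} There is a genuine gap in your Fredholm step; the missing idea is the nilpotent factorization of $1+K_\mu(z)$ that the paper uses.

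\paragraph{Where the Fredholm step fails.} It breaks down for two reasons. First, the cutoffs in $Q_\mu(z)$ are nested the wrong way. With $\chi\prec\chi_1\prec\chi_2$ one has $\chi_1\chi=\chi$, but $(1-\chi_2)(1-\chi)=1-\chi_2$. So in fact $(\Dvec^\Omega_\mu-z)Q_\mu(z)=1-(\chi_2-\chi)+K_\mu(z)$, and the multiplication operator $\chi_2-\chi$ is not compact on $L^2(\Omega)$. This part is repairable: cut the outgoing piece on the left by $1-\chi_0$ with $\chi_0\prec\chi$, or, as in the paper, use $(1-\rho_0)r_\Omega(\Dvec_\mu-z)^{-1}e_\Omega+\rho_1(\Dvec^\Omega-z_0)^{-1}\rho_0$.

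Second, and more seriously, your proof that $1+K_\mu(z_0)$ is invertible is invalid. The identity \eqref{CutoffFreeDistorted} requires the same cutoff, compactly supported in $\{F=0\}$, on both sides. Your outgoing term $(\alpha\cdot\nabla\chi_2)(\Dvec_\mu-z_0)^{-1}(1-\chi)$ carries $1-\chi$ on the right, and this reaches the region where $F\neq 0$.
\begin{itemize}
\item Already for real $\mu$, this term equals $(\alpha\cdot\nabla\chi_2)(\Dvec-z_0)^{-1}U_\mu^{-1}(1-\chi)$, not the undistorted expression you write.
\item For non-real $\mu$ you have no bound $\lV(\Dvec_\mu-z_0)^{-1}\rV\lesssim|\Im z_0|^{-1}$ at all. $\Dvec_\mu$ is not normal, and the kernel estimate behind Lemma \ref{TAnalyticity} carries a factor growing exponentially in $|z_0|$.
\end{itemize}

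\paragraph{The missing idea.} The remainder $K_\mu(z)$ has range supported in a compact set on which some $\rho_2$, compactly supported in $\{F=0\}$, equals $1$. Hence $(1-\rho_2)K_\mu(z)=0$, so $(K_\mu(z)(1-\rho_2))^2=0$ and
\[1+K_\mu(z)=\bigl(1+K_\mu(z)(1-\rho_2)\bigr)\bigl(1+K_\mu(z)\rho_2\bigr),\qquad \bigl(1+K_\mu(z)(1-\rho_2)\bigr)^{-1}=1-K_\mu(z)(1-\rho_2).\]
Only $1+K_\mu(z_0)\rho_2$ then has to be inverted. In that operator every distorted resolvent is sandwiched between cutoffs supported in $\{F=0\}$. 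So \eqref{CutoffFreeDistorted} legitimately gives $\lV K_\mu(z_0)\rho_2\rV\lesssim|\Im z_0|^{-1}$ for $z_0\in C_\mu$.

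\paragraph{Consequence for the localized resolvent.} The same factorization expresses $\chi(\Dvec^\Omega_\mu-z)^{-1}\chi$ through undistorted operators only, so it is manifestly independent of $\mu$; this is how the paper proves the localized identity. Your substitute, analytic continuation in $\mu$ from real $\mu$, needs $\mu\mapsto(1+K_\mu(z))^{-1}$ to be meromorphic on the whole disk at fixed $z$. You never establish this, and your argument for it rested on the flawed invertibility step.

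\paragraph{What is fine once this is repaired.}
\begin{itemize}
\item Getting the left inverse by duality is a valid substitute for the paper's explicit left parametrix. You take a right parametrix for $\Dvec^\Omega_{\overline\mu}-\overline z$ and use Lemma \ref{DistortedAdjointMIT}.
\item In the unique continuation step you need not assume $\Omega$ connected. Extend $\Npaz^{m-1}\psi$ by zero across the obstacle and apply unique continuation in $\R^3$, as the paper does.
\end{itemize}
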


The proof of Theorem \ref{MITLinkResonanceEigenvalue} follows the same strategy as in the previous section. First we identify the localized resolvent.
    
\begin{proposition}\label{MITDistortedSpectrum} 
    Let $\mu\in D(0,1/L)$.
    \begin{itemize}
        \item The spectrum of $\Dvec_\mu^\Omega$ outside $\sigma_\mu$ is discrete.
        \item For $z\in C_\mu$ such that $z\not\in\Sp(\Dvec_\mu^\Omega)$ and $\chi\in C^1_c(\overline{\Omega})$ supported in $\lcb F=0\rcb$
        \begin{equation}\label{MITCutoffDistorted}
            \chi(\Dvec_\mu^\Omega-z)^{-1}\chi=\chi(\Dvec^\Omega-z)^{-1}\chi.
        \end{equation}
    \end{itemize}
\end{proposition}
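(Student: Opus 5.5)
We construct a parametrix for $\Dvec^\Omega_\mu-z$ by gluing, near the obstacle, the resolvent of the undistorted MIT operator to the distorted free resolvent of Proposition \ref{DistortedResolvent} far away. Fix $z_0\in C_\mu$ with $\lv\Im z_0\rv$ large, so $z_0\notin\sigma_0\cup\sigma_\mu$. Choose cutoffs $\chi_0,\chi_1,\chi_2\in C^\infty_c(\R^3)$ with $\chi_j=1$ near $\R^3\setminus\Omega$, $\chi_0\prec\chi_1\prec\chi_2$, and $\Supp\chi_2\subset\lcb F=0\rcb$. For $z\in\C\setminus\sigma_\mu$ set
\[
Q(z):=\chi_2(\Dvec^\Omega-z_0)^{-1}\chi_1+(1-\chi_0)(\Dvec_\mu-z)^{-1}(1-\chi_1).
\]
Here $(1-\chi_1)$ extends functions on $\Omega$ by zero, and $(1-\chi_0)$ restricts to $\Omega$. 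Both terms map $L^2(\Omega)$ into $\Dfrak$. The first does because $\chi_2$ is smooth, so multiplication by it preserves the MIT boundary condition. The second vanishes near $\partial\Omega$.

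On $\Supp\chi_2$ we have $\nabla_\mu=\nabla$. Using $\chi_2\chi_1=\chi_1$ and $(1-\chi_0)(1-\chi_1)=1-\chi_1$, we get
\[
(\Dvec^\Omega_\mu-z)Q(z)=1+K_1(z)+K_2(z),
\]
where
\[
K_1(z)=-i\alpha\cdot(\nabla\chi_2)(\Dvec^\Omega-z_0)^{-1}\chi_1+(z_0-z)\chi_2(\Dvec^\Omega-z_0)^{-1}\chi_1,
\]
\[
K_2(z)=i\alpha\cdot(\nabla_\mu\chi_0)(\Dvec_\mu-z)^{-1}(1-\chi_1).
\]
The term $\alpha\cdot\nabla\chi_2$ is compactly supported and followed by an $H^1$-valued operator, so it is compact by Rellich--Kondrachov. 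The factor $\nabla\chi_0$ is supported in $\lcb F=0\rcb$, so $\nabla_\mu\chi_0=\nabla\chi_0$, and $K_2(z)$ is compact for the same reason. The term $(z_0-z)\chi_2(\Dvec^\Omega-z_0)^{-1}\chi_1$ is also compact, since the cutoff is compactly supported and the resolvent maps into $H^1$. Hence $z\mapsto K(z):=K_1(z)+K_2(z)$ is an analytic family of compact operators on $\C\setminus\sigma_\mu$.

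To apply analytic Fredholm theory we need one point where $1+K(z)$ is invertible. We take $z=z_0$ with $\lv\Im z_0\rv\gg1$. Then $K_1(z_0)$ reduces to the commutator term, whose norm is $O(\lv\Im z_0\rv^{-1})$ by the $L^2$ resolvent bound for the self-adjoint $\Dvec^\Omega$. For $K_2(z_0)$, note that $\nabla\chi_0$ and $1-\chi_1$ have disjoint supports and both lie where the cutoff formula \eqref{CutoffFreeDistorted} applies. We therefore replace $(\Dvec_\mu-z_0)^{-1}$ by $(\Dvec-z_0)^{-1}$ up to a cutoff in $\lcb F=0\rcb$. This handles the part of $1-\chi_1$ supported in $\lcb F=0\rcb$; for the remainder, the kernel $K^z_\mu$ of Lemma \ref{TAnalyticity} decays like $e^{-c\lv\Im z_0\rv\,\mathrm{dist}}$ across a fixed positive distance. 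In either case the norm tends to $0$. Thus $1+K(z_0)$ is invertible, and $(1+K(z))^{-1}$ is finite meromorphic on $\C\setminus\sigma_\mu$ (a connected set).

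We then do the same on the left: either build a left parametrix by the symmetric construction, or apply the argument to $(\Dvec^\Omega_\mu)^*=\Dvec^\Omega_{\overline{\mu}}$. The latter identification follows by analytic continuation from real $\mu$, as in Proposition \ref{FreeDistortedAdjoint}, with elliptic regularity supplied by Lemma \ref{Ellipticity} away from $\partial\Omega$ and the known MIT regularity near $\partial\Omega$. This shows that $\Dvec^\Omega_\mu-z$ is Fredholm of index $0$ and invertible off a discrete set. There $(\Dvec^\Omega_\mu-z)^{-1}=Q(z)(1+K(z))^{-1}$, with finite-rank poles. This proves the first item.

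For the second item, the same construction at $\mu=0$ produces $Q_0$ and $K^{(0)}$. On $C_\mu$, formula \eqref{CutoffFreeDistorted} gives $\chi Q(z)\tilde\chi=\chi Q_0(z)\tilde\chi$ and likewise for $K$, for cutoffs supported in $\lcb F=0\rcb$. Then \eqref{MITCutoffDistorted} follows by the decomposition trick used in Proposition \ref{DistortedSpectrum}: split off the $(1-\rho)$ part with a Neumann inverse and keep the $\rho$ part. Alternatively, both sides of \eqref{MITCutoffDistorted} are analytic in $\mu$ at a fixed $z$ off the spectrum. For real $\mu$ they agree because $U_\mu$ preserves $\Dfrak$ (it is the identity near $\partial\Omega$) and $\chi U_\mu=\chi$. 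The identity then extends in $\mu$, and afterwards in $z$ across $C_\mu$.

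The main obstacle is the invertibility of $1+K(z_0)$, specifically the smallness of the $K_2$ term. The distorted free resolvent has no self-adjoint bound, so it has to be controlled through the cutoff identity and the exponential decay of $K^z_\mu$.
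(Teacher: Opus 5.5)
Your proposal is correct. Its skeleton matches the paper: a right parametrix glued from $(\Dvec^\Omega-z_0)^{-1}$ near the obstacle and $(\Dvec_\mu-z)^{-1}$ away from it, analytic Fredholm theory, and the second item from \eqref{CutoffFreeDistorted} plus the $\rho$-splitting.

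The genuine difference is how you make $1+K(z_0)$ invertible.

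\emph{The paper's route.} It never estimates the distorted resolvent outside $\{F=0\}$. Since $(1-\rho_2)K_\mu(z)=0$, it factors $1+K_\mu(z)=(1+K_\mu(z)(1-\rho_2))(1+K_\mu(z)\rho_2)$. The first factor is always invertible, with inverse $1-K_\mu(z)(1-\rho_2)$. So only $K_\mu(z_0)\rho_2$ must be small. By \eqref{CutoffFreeDistorted} that operator equals an undistorted one, which is $O(\lv\Im z_0\rv^{-1})$ by the self-adjoint bounds.

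\emph{Your route.} You put $1-\chi_1$ on the right of the distorted term, so $K_2$ has separated supports, and you prove $\lV K(z_0)\rV\to0$ outright via off-diagonal decay of $K^{z_0}_\mu$. That decay holds, but Lemma \ref{TAnalyticity} does not supply it. The appendix bound carries the factor $e^{C\lv\omega_\mu(z)\rv/(1+\Re\mu)}$, which grows with $\lv z_0\rv$, so it does not by itself give decay across a small fixed distance. To justify your claim, take $z_0=iT$ and check that $\omega_\mu(iT)=i\sqrt{T^2+1}$. Then use $\Re\delta_\mu(x,y)\geq c\lv x-y\rv$, which follows from the compactness argument behind Remark \ref{SquareRootDefinition} with $\Re$ in place of the modulus. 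Since $\Supp\nabla\chi_0$ and $\Supp(1-\chi_1)$ are already at positive distance, you need not split $1-\chi_1$ at all.

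\emph{What each buys.} The paper's factorization is purely algebraic and uses only \eqref{CutoffFreeDistorted}. Yours gives the stronger statement $\lV K(z_0)\rV<1$, at the price of a kernel estimate. Note that you invoke the splitting trick yourself for the second item, and it would have settled the first as well.

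\emph{Your alternatives.} Getting injectivity from the adjoint, via $\ker A=(\Ran A^*)^\perp$ and Lemma \ref{DistortedAdjointMIT}, is a valid substitute for the paper's left parametrix. That lemma is proved later in the paper, but independently of this proposition. This route also skips the step of upgrading invertibility of $1+\tilde K_\mu(z_0)$ from $L^2(\Omega)$ to $\Dfrak$. Your $\mu$-continuation alternative for the second item also works, once you note that at fixed $z\in(-1,1)$ the set of $\mu$ where $\Dvec^\Omega_\mu-z$ fails to be invertible is discrete, by analytic Fredholm theory in $\mu$.
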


In contrast with the case of compactly supported perturbations, the presence of the boundary will require the construction of both a right and a left parametrix, in order to obtain respectively surjectivity and injectivity modulo compact operators.

\begin{proof} 
    Let $z\not\in\sigma_\mu$. Pick $\rho_0,\rho_1\in C^2_c(\overline{\Omega})$ supported in $\lcb F=0\rcb$ such that $\indic_{\R^3\setminus\Omega}\prec\rho_0\prec\rho_1$, $z_0\not\in\sigma_0$ and set
    \begin{equation*}
        \left\{
        \begin{array}{ll}
            T_\mu(z):=(1-\rho_0)r_\Omega(\Dvec_\mu-z)^{-1}e_\Omega+\rho_1(\Dvec^\Omega-z_0)^{-1}\rho_0\\
            
            K_\mu(z):=i\alpha\cdot(\nabla\rho_0)r_\Omega(\Dvec_\mu-z)^{-1}e_\Omega+\lp i\alpha\cdot(\nabla\rho_1)-(z-z_0)\rho_1\rp(\Dvec^\Omega-z_0)^{-1}\rho_0
        \end{array}
        \right.
    \end{equation*}
    where $r_\Omega$ is the restriction operator to $\Omega$ and $e_\Omega$ is the extension operator by $0$ outside $\Omega$. Notice that $T_\mu(z)$ is bounded from $L^2(\Omega)$ into $\Dfrak$. We obtain a right parametrix on $L^2(\Omega)$
    \begin{equation}\label{RightParametrix}
        (\Dvec_\mu^\Omega-z)T_\mu(z)=1+K_\mu(z).
    \end{equation}

    Pick $\rho_2\in C^2_c(\overline{\Omega})$ supported in $\lcb F=0\rcb$ such that $\rho_1\prec\rho_2$ and set 
    \begin{equation*}
        \left\{
        \begin{array}{ll}
            \tilde{T}_\mu(z):=(1-\rho_1)r_\Omega(\Dvec_\mu-z)^{-1}e_\Omega(1-\rho_0)+\rho_1(\Dvec^\Omega-z_0)^{-1}\rho_2\\
            
            \tilde{K}_\mu(z):=-i(1-\rho_1)r_\Omega(\Dvec_\mu-z)^{-1}e_\Omega\alpha\cdot(\nabla\rho_0)+\rho_1(\Dvec^\Omega-z_0)^{-1}\lp i\alpha\cdot(\nabla\rho_2)-(z-z_0)\rho_2\rp
        \end{array}
        \right.
    \end{equation*}
    We obtain a left parametrix on $\Dfrak$
    \begin{equation}\label{LeftParametrix}
        \tilde{T}_\mu(z)(\Dvec_\mu^\Omega-z)=1+\tilde{K}_\mu(z).
    \end{equation}
    
    Since $\Dfrak$ is continuously embedded in $H^1(\Omega)$ and thanks to Rellich-Kondrachov Theorem the operators $K_\mu(z)$ and $\tilde{K}_\mu(z)$ are compact on $L^2(\Omega)$ and $\Dfrak$ respectively. One has
    \[1+K_\mu(z)=(1+K_\mu(z)(1-\rho_2))(1+K_\mu(z)\rho_2)\]
    and $1+K_\mu(z)(1-\rho_2)$ and $1-K_\mu(z)(1-\rho_2)$ are inverse one of the other. We take $z_0\in C_\mu$ in such a way that \eqref{CutoffFreeDistorted} yields
    \begin{equation*}
        K_\mu(z_0)\rho_2:=i\alpha\cdot(\nabla\rho_0)r_\Omega(\Dvec-z)^{-1}e_\Omega\rho_2+ i\alpha\cdot(\nabla\rho_1)(\Dvec^\Omega-z_0)^{-1}\rho_2
    \end{equation*}
    and then
    \begin{align*}
        \lV K_\mu(z_0)\rho_2\rV_{\Lcal(L^2(\Omega))}&\lesssim\lV(\Dvec-z_0)^{-1}\rV_{\Lcal(L^2)}+\lV(\Dvec^\Omega-z_0)^{-1}\rV_{\Lcal(L^2(\Omega))}\\
        &\lesssim\frac{1}{\lv\Im z_0\rv}
    \end{align*}
    which implies that $1+K_\mu(z_0)\rho_2$ is invertible on $L^2(\Omega)$ for $\lv\Im z_0\rv\gg1$ thanks to Neumann invertibility criterion. We conclude that $1+K_\mu(z_0)$ is invertible on $L^2(\Omega)$. In a very similar fashion, $1+\tilde{K}_\mu(z_0)$ is invertible on $L^2(\Omega)$. In addition, since $\tilde{K}_\mu(z_0)$ maps $L^2(\Omega)$ into $\Dfrak$ we conclude that $1+\tilde{K}_\mu(z_0)$ is invertible on $\Dfrak$.  Thanks to the analytic Fredholm Theorem, $z\mapsto(1+K_\mu(z))^{-1}$ is finite meromorphic from $\C\setminus\sigma_\mu$ to $\Lcal(L^2(\Omega))$ with poles set
    \[Z_\mu:=\lcb\lambda\in\C\setminus\sigma_\mu\mbox{, }1+K_\mu(\lambda) \mbox{ is not invertible on } L^2(\Omega)\rcb\]
    and $z\mapsto(1+\tilde{K}_\mu(z))^{-1}$ is finite meromorphic from $\C\setminus\sigma_\mu$ to $\Lcal(\Dfrak)$ with poles set
    \[\tilde{Z}_\mu:=\lcb\lambda\in\C\setminus\sigma_\mu\mbox{, }1+\tilde{K}_\mu(\lambda) \mbox{ is not invertible on } \Dfrak\rcb.\]

    If $\lambda\in\Sp(\Dvec^\Omega_\mu)\cap(\C\setminus\sigma_\mu)$ then $\lambda\in Z_\mu\cup\tilde{Z}_\mu$ thanks to \eqref{RightParametrix} and \eqref{LeftParametrix} therefore $\lambda$ is isolated in $\Sp(\Dvec^\Omega_\mu)$ and for $0<\lv z-\lambda\rv\ll 1$ 
    \[(\Dvec^\Omega_\mu-z)^{-1}=T_\mu(z)(1+K_\mu(z))^{-1}\]
    which guarantees that $\lambda$ is a pole of finite type of $z\mapsto(\Dvec^\Omega_\mu-z)^{-1}$. This ends the proof of the first statement. 

    Finally let $z\in C_\mu$ and $\chi\in C^1_c(\overline{\Omega})$ supported in $\lcb F=0\rcb$. For $z\not\in Z_\mu\cup\tilde{Z}_\mu$
    \[(1+K_\mu(z))^{-1}=(1+K_\mu(z)\rho_2)^{-1}(1-K_\mu(z)(1-\rho_2))\]
    then with $\Tilde{\chi}\in C^1_c(\overline{\Omega})$ supported in 
    $\lcb F=0\rcb$ such that $\chi,\rho_2\prec\Tilde{\chi}$ and using  \eqref{CutoffFreeDistorted} we get 
    \begin{equation*}
        \left\{
        \begin{array}{ll}
            (1-K_\mu(z)(1-\rho_2))\chi=\Tilde{\chi}(1-K_0(z)(1-\rho_2))\chi\\
            
            \lp1+K_\mu(z)\rho_2\rp^{-1}\Tilde{\chi}=\Tilde{\chi}\lp1+K_0(z)\rho_2\rp^{-1}\\

            \chi T_\mu(z)\tilde{\chi}=\chi T_0(z)\tilde{\chi}
        \end{array}
        \right.
    \end{equation*}
    and then
    \[\chi(\Dvec^\Omega_\mu-z)^{-1}\chi=\chi T_0(z)\Tilde{\chi}\lp1+K_0(z)\rho_2\rp^{-1}\lp1-K_0(z)(1-\rho_2)\rp\chi.\]
    This holds in particular for $\mu=0$ therefore we get \eqref{MITCutoffDistorted} for all $z\in C_\mu$ such that $z\not\in Z_\mu\cup\tilde{Z}_\mu$. Finally \eqref{MITCutoffDistorted} remains true for all $z\in C_\mu$ such that $z\not\in\Sp(\Dvec^\Omega_\mu)$ by analytic continuation. 
\end{proof}

The characterization of the adjoint is again a key ingredient in this section. In contrast with the case of compactly supported perturbations, the presence of the boundary requires a more careful argument.

\begin{lemma}\label{DistortedAdjointMIT}
    With domains $\Dfrak$ one has $(\Dvec^\Omega_\mu)^*=\Dvec^\Omega_{\overline{\mu}}$.
\end{lemma}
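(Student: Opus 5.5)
We prove the two inclusions $\Dvec^\Omega_{\overline{\mu}}\subset(\Dvec^\Omega_\mu)^*$ and $\Dom((\Dvec^\Omega_\mu)^*)\subset\Dfrak$.

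\textbf{Inclusion $\Dvec^\Omega_{\overline{\mu}}\subset(\Dvec^\Omega_\mu)^*$.} Fix $\phi,\psi\in\Dfrak$. For real $\mu$ the operator $\Dvec^\Omega_\mu$ is symmetric on $\Dfrak$. Indeed, integrating by parts in $\Omega$ gives
\[\la\phi,(\Dvec_\mu^\Omega)\psi\ra_{L^2(\Omega)}-\la\Dvec_\mu^\Omega\phi,\psi\ra_{L^2(\Omega)}=\int_{\partial\Omega}\phi^*(\text{boundary matrix})\psi.\]
Since $F=0$ near $\partial\Omega$, we have $\nabla_\mu=\nabla$ there, so the boundary matrix is the undistorted one, $\pm i\alpha\cdot n$. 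The MIT condition then makes the boundary term vanish, exactly as in the self-adjointness of $\Dvec^\Omega$. A cleaner way to organise this is with a cutoff $\rho\in C^2_c(\overline{\Omega})$ supported in $\lcb F=0\rcb$ and equal to $1$ near $\partial\Omega$. Split $\psi=\rho\psi+(1-\rho)\psi$. The $\rho$-part is handled by the symmetry of $\Dvec^\Omega$, since $\Dvec^\Omega_\mu=\Dvec^\Omega$ on the support of $\rho$. The $(1-\rho)$-part is an $H^1(\R^3)$ function after extension by zero, so Proposition \ref{FreeDistortedAdjoint} applies to it. The cross terms cancel. Both sides of the resulting identity are analytic in $\mu$ (antilinear in $\overline{\mu}$ on the left), so analytic continuation gives, for all $\mu\in D(0,1/L)$,
\[\la\phi,\Dvec^\Omega_\mu\psi\ra=\la\Dvec^\Omega_{\overline{\mu}}\phi,\psi\ra.\]

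\textbf{Reverse inclusion.} Let $\phi\in\Dom((\Dvec^\Omega_\mu)^*)$ and set $g:=(\Dvec^\Omega_\mu)^*\phi$. We localise with the same $\rho$ and prove separately that $\rho\phi\in\Dfrak$ and that $(1-\rho)\phi\in H^1$ with support away from $\partial\Omega$.

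\emph{Exterior piece.} For $\psi\in H^1(\R^3)$, the function $(1-\rho)\psi$ restricted to $\Omega$ lies in $\Dfrak$, since it vanishes near $\partial\Omega$. Testing against such $\psi$ shows that $e_\Omega(1-\rho)\phi$ belongs to $\Dom((\alpha\cdot\nabla_\mu)^*)$ in $L^2(\R^3)$. The commutator $[\Dvec_\mu,1-\rho]=i\alpha\cdot\nabla\rho$ is bounded, so its contribution stays in $L^2$. Proposition \ref{FreeDistortedAdjoint} then gives $e_\Omega(1-\rho)\phi\in H^1$.

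\emph{Boundary piece.} For $\psi\in\Dfrak$ we have $\Dvec^\Omega_\mu(\rho\psi)=\Dvec^\Omega(\rho\psi)$. Therefore
\[\la\rho\phi,\Dvec^\Omega\psi\ra=\la\phi,\Dvec^\Omega_\mu(\rho\psi)\ra-\la\phi,[\Dvec^\Omega,\rho]\psi\ra=\la\rho g-(i\alpha\cdot\nabla\rho)\phi,\psi\ra,\]
where we used $\rho\psi\in\Dfrak$, the definition of $g$, and took the adjoint of the bounded commutator. The right-hand side is a bounded linear functional of $\psi\in L^2$, so $\rho\phi\in\Dom((\Dvec^\Omega)^*)$. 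Since $\Dvec^\Omega$ is self-adjoint on $\Dfrak$ by \cite[Theorem 3.2]{OuVe18}, it follows that $\rho\phi\in\Dfrak$.

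Adding the two pieces gives $\phi\in\Dfrak$. This is the main step, and using self-adjointness of the undistorted MIT operator avoids boundary regularity arguments entirely. Finally, on $\Dfrak$ the adjoint acts as $\Dvec^\Omega_{\overline{\mu}}$ by the first inclusion, which concludes the proof.
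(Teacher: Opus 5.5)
Your proof is correct and follows essentially the same route as the paper. Both use a cutoff equal to $1$ near $\partial\Omega$ and supported in $\{F=0\}$, self-adjointness of the undistorted $\Dvec^\Omega$ on $\Dfrak$ for the boundary piece, and Proposition \ref{FreeDistortedAdjoint} for the bulk piece; the only difference is that you spell out the inclusion $\Dfrak\subset\Dom((\Dvec^\Omega_\mu)^*)$, which the paper only says follows ``with the same approach'' (there, since $\phi$ is only in $H^1(\Omega)$, multiply it by a second cutoff equal to $1$ near $\Supp(1-\rho)$ and vanishing near $\partial\Omega$ before invoking Proposition \ref{FreeDistortedAdjoint}, or rely on your real-$\mu$ symmetry plus analytic continuation).
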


\begin{proof}

    Let $\psi\in\Dom((\Dvec^\Omega_\mu)^*)$. Pick $\chi\in C^1_c(\overline{\Omega})$ real valued and supported in $\lcb F=0\rcb$  such that $\indic_{\R^3\setminus\Omega}\prec\chi$ and let us decompose $\psi=\chi\psi+(1-\chi)\psi$. We first work near the boundary. Given $\phi\in\Dfrak$ then $\chi\phi\in\Dfrak$ and 
    \begin{align*}
        \la\chi\phi,(\Dvec^\Omega_\mu)^*\psi\ra_{L^2(\Omega)}&=\la\Dvec^\Omega_\mu(\chi\phi),\psi\ra_{L^2(\Omega)}\\
        &=\la-i\alpha\cdot(\nabla\chi)\phi+\chi\Dvec^\Omega\phi,\psi\ra_{L^2(\Omega)}
    \end{align*}
    hence we get
    \[\la\Dvec^\Omega\phi,\chi\psi\ra_{L^2(\Omega)}=\la\phi,\chi(\Dvec^\Omega_\mu)^*\psi-i\alpha\cdot(\nabla\chi)\psi\ra_{L^2(\Omega)}.\]
    Since $\Dvec^\Omega$ is self-adjoint on $\Dfrak$ then $\chi\psi\in\Dfrak$ and
    \begin{equation}\label{Boundary}
        \Dvec^\Omega(\chi\psi)=\chi(\Dvec^\Omega_\mu)^*\psi-i\alpha\cdot(\nabla\chi)\psi
    \end{equation}
    Now we treat the part away from the boundary. Given $\phi\in H^1$ then $(1-\chi)\phi_{\tq\Omega}\in\Dfrak$ and
    \begin{align*}
        \la(1-\chi)\phi_{\tq\Omega},(\Dvec^\Omega_\mu)^*\psi\ra_{L^2(\Omega)}&=\la\Dvec^\Omega_\mu((1-\chi)\phi_{\tq\Omega}),\psi\ra_{L^2(\Omega)}\\
        &=\la i\alpha\cdot(\nabla\chi)\phi_{\tq\Omega}+(1-\chi)\Dvec^\Omega_\mu\phi_{\tq\Omega},\psi\ra_{L^2(\Omega)}
    \end{align*}
    hence we get
    \[\la\Dvec_\mu\phi,e_\Omega(1-\chi)\psi\ra_{L^2}=\la\phi,e_\Omega((1-\chi)(\Dvec^\Omega_\mu)^*\psi+i\alpha\cdot(\nabla\chi)\psi)\ra_{L^2}.\]
    Thanks to Proposition \ref{FreeDistortedAdjoint} we conclude that $e_\Omega(1-\chi)\psi\in H^1$ and
    \begin{equation}\label{Bulk}
        \Dvec_{\overline{\mu}}(1-\chi)e_\Omega\psi=(1-\chi)e_\Omega(\Dvec^\Omega_\mu)^*\psi+i\alpha\cdot(\nabla\chi)e_\Omega\psi.
    \end{equation}
    Combining \eqref{Boundary} and \eqref{Bulk} we conclude that $\psi\in\Dfrak$ and $(\Dvec^\Omega_\mu)^*\psi=\Dvec^\Omega_{\overline{\mu}}\psi$.

    With the same approach we prove that $\Dfrak\subset\Dom((\Dvec^\Omega_\mu)^*)$ which ends the proof of the Lemma.
\end{proof}

We continue the proof of Theorem \ref{MITLinkResonanceEigenvalue} by the preservation of the multiplicities, exactly as in the previous section. We will only emphasize the few modifications required by the boundary condition. 

\begin{proposition}\label{MITTruncatedRanks}
    Let $\chi\in C^1_c(\overline{\Omega})$ such that $\indic_{\R^3\setminus\Omega}\prec\chi$. 
    \begin{enumerate}
        \item If $\lambda\in\Sp_\disc(\Dvec^\Omega_\mu)$ and $\Pi$ is the associated Riesz projection then $\rank(\Pi)=\rank(\chi\Pi\chi)$.
        \item If $(\lambda,\kappa)\in\Res(\Dvec^\Omega)$ with $\Im(1+\mu)\kappa>0$ and $\Ppaz$ is the residue of $z\mapsto\Rpaz(z,\omega_\mu(z))$ at $\lambda$ then $\rank(\Ppaz)=\rank(\chi\Ppaz\chi)$.
    \end{enumerate}
\end{proposition}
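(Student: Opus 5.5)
We mirror the proof of Proposition \ref{TruncatedRanks}. In both statements the aim is to show two things: multiplication by $\chi$ is one-to-one on the range of the projection or residue, and this range equals the range of the projection or residue composed on the right with $\chi$. These two facts together make multiplication by $\chi$ a bijection from $\Ran(\Pi)$ (resp. $\Ran(\Ppaz)$) onto $\Ran(\chi\Pi\chi)$ (resp. $\Ran(\chi\Ppaz\chi)$), which gives the rank equalities.

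\textbf{Statement (1).} Let $N:=(\Dvec^\Omega_\mu-\lambda)$ restricted to $\Ran(\Pi)\subset\Dfrak$; it is nilpotent. Take $\psi\in\Ran(\Pi)$ with $\chi\psi=0$ and let $m$ be minimal with $N^m\psi=0$. Since $\psi$ vanishes near $\partial\Omega$ (indeed near $\R^3\setminus\Omega$), so do all $N^k\psi$, because $\Dvec^\Omega_\mu$ is a local differential operator. Hence $\phi:=e_\Omega N^{m-1}\psi\in H^1$, and $\phi$ is supported away from the obstacle, where $\Dvec_\mu$ and $\Dvec^\Omega_\mu$ coincide. If $m>0$, then $(\Dvec_\mu-\lambda)\phi=0$ with $\phi\neq0$ and $\lambda\notin\sigma_\mu$, which contradicts Proposition \ref{DistortedResolvent}. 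So $\psi=0$.

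For the adjoint, Lemma \ref{DistortedAdjointMIT} gives $(\Dvec^\Omega_\mu)^*=\Dvec^\Omega_{\overline\mu}$ on $\Dfrak$. Thus $\overline\lambda\in\Sp_\disc(\Dvec^\Omega_{\overline\mu})$ with Riesz projection $\Pi^*$ and $\overline\lambda\notin\sigma_{\overline\mu}=\overline{\sigma_\mu}$. The same argument with $\overline\mu$ shows that multiplication by $\overline\chi$ is injective on $\Ran(\Pi^*)$. Therefore $\rank(\overline\chi\Pi^*)=\rank(\Pi^*)$, and taking adjoints gives $\Ran(\Pi\chi)=\Ran(\Pi)$.

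\textbf{Statement (2).} For injectivity, let $\Npaz:=(\beta-i\alpha\cdot\nabla-\lambda)$ acting on $\Ran(\Ppaz)\subset\Dfrak_\loc$. If $\chi\psi=0$, then $\Npaz^{m-1}\psi$ vanishes on a neighbourhood of $\R^3\setminus\Omega$. Its extension by zero solves $(\Dvec-\lambda)u=0$ on $\R^3$, hence $(-\Delta-\kappa^2)u=0$, and $u$ vanishes on an open set. Unique continuation \cite[Theorem XIII.63]{ReSi72} then forces $u=0$.

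For $\Ran(\Ppaz)=\Ran(\Ppaz\chi)$, the potential $V$ is no longer available, so we replace \eqref{ResolventEquation} by a resolvent identity built from the parametrix of Proposition \ref{MITDistortedSpectrum} with $\mu=0$ on the Riemann surface. Concretely, set $\Tpaz(z,\omega):=(1-\rho_0)r_\Omega\Rpaz_0(z,\omega)e_\Omega+\rho_1(\Dvec^\Omega-z_0)^{-1}\rho_0$ with $\rho_1\prec\chi$. The identity $(\Dvec^\Omega-z)\Tpaz=1+\Kpaz$, combined with uniqueness of the continuation, gives
\[\Rpaz=\Tpaz-\Rpaz\,\Kpaz,\]
first for $\Im\omega>0$ and then by analytic continuation. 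Here $\Kpaz(z,\omega)$ is supported (on its left) in $\Supp\nabla\rho_0\cup\Supp\rho_1$, a compact set where $\chi_{I-1}=1$. Since $\Tpaz$ is holomorphic near $(\lambda,\kappa)$, comparing residues in the chart $z\mapsto(z,\omega_\mu(z))$ expresses $\Ppaz$ as $\sum_i\Npaz^i\Ppaz\,\chi_{I-1}(\cdots)$.

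We then iterate as in the previous proof. On $\Dfrak$, with $\chi_{k+1}\prec\chi_k$, we have $\Npaz\Ppaz\chi_{k+1}=\Ppaz\chi_k(\Dvec^\Omega-\lambda)\chi_{k+1}$, and by density $\Ran(\Npaz^i\Ppaz\chi_i)\subset\Ran(\Ppaz\chi)$. This uses that $\chi_{k+1}\phi\in\Dfrak$ for $\phi\in\Dfrak$, since each $\chi_j$ is scalar and equal to $1$ near $\partial\Omega$. Hence $\Ran(\Ppaz)=\Ran(\Ppaz\chi)$, and the conclusion follows as in (1).

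The main obstacle is justifying this boundary-adapted resolvent identity and the holomorphy of $\Tpaz$ near $(\lambda,\kappa)$. The delicate point is that $\Rpaz$ is only known abstractly from \cite{Du25}. We would check the identity for $\Im\omega>0$, where everything is a genuine resolvent, and extend it by meromorphy.
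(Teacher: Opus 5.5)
Your proposal is correct and follows the paper's proof essentially step for step: injectivity of multiplication by $\chi$ on $\Ran(\Pi)$ and $\Ran(\Pi^*)$ via the nilpotent part together with Proposition \ref{DistortedResolvent} and Lemma \ref{DistortedAdjointMIT}, injectivity on $\Ran(\Ppaz)$ via unique continuation, and $\Ran(\Ppaz)=\Ran(\Ppaz\chi)$ via the identity $\Rpaz=\Tpaz-\Rpaz\Kpaz$ from the $\mu=0$ right parametrix combined with a nested chain of cut-offs $\rho_1\prec\chi_{I-1}\prec\dots\prec\chi$. Inserting the single cut-off $\chi_{I-1}$ in the residue formula, instead of the paper's $i$-dependent $\chi_i$, is only a bookkeeping difference, since $\chi_i\chi_{I-1}=\chi_{I-1}$.
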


\begin{proof} 
    \textbf{Statement (1).} The map
    \[N:\psi\in\Ran(\Pi)\mapsto(\Dvec_\mu^\Omega-\lambda)\psi\in\Ran(\Pi)\]
    is well defined and nilpotent. Let $\psi\in\Ran(\Pi)$ such that $\chi\psi=0$. Let $m$ be the first integer such that $N^m\psi=0$. Assume that $m>0$. Since $\psi=0$ in a neighborhood of $\partial\Omega$ then $N^{m-1}\psi$ does so. We deduce that $e_\Omega N^{m-1}\psi\in H^1$ is an eigenvector of $\Dvec_\mu$ which is a contradiction by Proposition \ref{DistortedResolvent}. Hence  $\psi=0$ i.e the multiplication by $\chi$ is one-to-one on $\Ran(\Pi)$. Similarly the multiplication by $\overline{\chi}$ is one-to-one on $\Ran(\Pi^*)$ thanks to Lemma \ref{DistortedAdjointMIT}. We conclude as for the proof of Proposition \ref{TruncatedRanks}.
    
    \textbf{Statement (2).} The endomorphism
    \[\Npaz :\psi\in\Ran(\Ppaz)\mapsto(\beta-i\alpha\cdot\nabla-\lambda)\psi\in\Ran(\Ppaz)\]
    is well defined and nilpotent. Let $\psi\in\Ran(\Ppaz)$ such that $\chi\psi=0$. Assume that the first integer $m$ such that $\Npaz^m\psi=0$ is positive. Then $\Npaz^{m-1}\psi=0$ in a neighborhood of $\partial\Omega$ and $(\beta-i\alpha\cdot\nabla-\lambda)\Npaz^{m-1}\psi=0$. In particular $\Npaz^{m-1}\psi$ extends as a function $\Psi\in H^2_\loc$ such that $(-\Delta-\kappa^2)\Psi=0$. We conclude that $\Psi=0$ by unique continuation \cite[Theorem XIII.63]{ReSi72}. Eventually $\psi=0$ i.e the multiplication by $\chi$ is one-to-one on $\Ran(\Ppaz)$.
    
    We recall that if $I\geq 1$ is the index of $\Npaz$ then
    \[z\mapsto \Rpaz(z,\omega_\mu(z))-\sum_{i=1}^I\frac{\Npaz^{i-1}\Ppaz}{(z-\lambda)^i}\]
    has a holomorphic continuation from an open neighbourhood of $\lambda$ to $\Lcal(L^2_\comp(\overline{\Omega}),\Dfrak_\loc)$. 
    Thanks to \eqref{RightParametrix} with $\mu=0$ and the analytic continuation, one has for $(z,\omega)\in\Mpaz$ which is not resonance of $\Dvec^\Omega$
    \begin{equation*}
        \Rpaz(z,\omega)=\Tpaz(z,\omega)-\Rpaz(z,\omega)\Kpaz(z,\omega)
    \end{equation*}
    where we set, with $\Rpaz_0$ as in the proof of Theorem \ref{TruncatedRanks},
    \begin{equation*}
        \left\{
        \begin{array}{ll}
            \Tpaz(z,\omega):=r_\Omega(1-\rho_0)\Rpaz_0(z,\omega)e_\Omega+\rho_{1\tq\Omega}(\Dvec^\Omega-z_0)^{-1}\rho_{0\tq\Omega}\\
            
            \Kpaz(z,\omega):=ir_\Omega\alpha\cdot(\nabla\rho_0)\Rpaz_0(z,\omega)e_\Omega+\lp i\alpha\cdot(\nabla\rho_{1\tq\Omega})-(z-z_0)\rho_{1\tq\Omega}\rp(\Dvec^\Omega-z_0)^{-1}\rho_{0\tq\Omega}.
        \end{array}
        \right.
    \end{equation*}
    We deduce
    \begin{equation}\label{MITResiduesComparison}
        \Ppaz=-\sum_{i=0}^{I-1}\Npaz ^i\Ppaz\dfrac{d^i}{dz^i}_{\tq z=\lambda}\Kpaz(z,\omega_\mu(z)).
    \end{equation}
    Notice that we can choose $\indic_{\R^3\setminus\Omega}\prec\rho_0\prec\rho_1\prec\chi$. We construct $\chi_1,...,\chi_{I-1}\in C^1_c(\overline{\Omega})$ such that $\rho_1\prec\chi_{I-1}\prec...\prec\chi_1\prec\chi$ and
    \[\Ran(\Npaz ^{I-1}\Ppaz\chi_{I-1})\subset...\subset\Ran(\Npaz\Ppaz\chi_1)\subset\Ran(\Ppaz\chi).\]
    Since we can replace $\Npaz ^i\Ppaz$ by $\Npaz ^i\Ppaz\chi_i$ in \eqref{MITResiduesComparison} we deduce $\Ran(\Ppaz)=\Ran(\Ppaz\chi)$ and we easily conclude.
\end{proof}

Finally, the proof of Theorem \ref{MITLinkResonanceEigenvalue} is exactly the same as the proof of Theorem \ref{LinkResonanceEigenvalue}. Again we can apply Theorem \ref{MITLinkResonanceEigenvalue} to recover Kramers degeneracy for resonances.

\begin{corollary}\label{MITKramers}
    Every resonance $(\lambda,\kappa)\in\Mpaz$ of $\Dvec^\Omega$ such that $\kappa\not\in i(-\infty,0]$ has even multiplicity. 
\end{corollary}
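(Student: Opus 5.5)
The argument will follow the proof of Corollary \ref{KramersDegeneracy}, with Theorem \ref{MITLinkResonanceEigenvalue} and Lemma \ref{DistortedAdjointMIT} replacing Theorem \ref{LinkResonanceEigenvalue} and \eqref{PerturbedDistortedAdjoint}.

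\textbf{Step 1: realize the resonance as a discrete eigenvalue.} Let $(\lambda,\kappa)\in\Res(\Dvec^\Omega)$ with $\kappa\not\in i(-\infty,0]$. If $\Im\kappa>0$, take $\mu=0$. Otherwise $\Im\kappa\leq0$ and $\Re\kappa\neq0$, so there is $\ell>1$ with $\lv\Re\kappa\rv>\sqrt{\ell^2-1}\lv\Im\kappa\rv$. Apply Lemma \ref{FChoice} with $K=\R^3\setminus\Omega$, which is compact because $\R^3\setminus\Omega$ is bounded and closed. This gives $F$ vanishing near $\R^3\setminus\Omega$ with $L\leq\ell$. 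Lemma \ref{CatchedResonances} then gives $\mu\in D(0,1/\ell)\subset D(0,1/L)$ with $\Im(1+\mu)\kappa>0$. By Theorem \ref{MITLinkResonanceEigenvalue}, $\lambda\in\Sp_\disc(\Dvec^\Omega_\mu)$, and the multiplicity of the resonance equals $r=\rank\Pi$, where $\Pi$ is the Riesz projection of $\Dvec^\Omega_\mu$ at $\lambda$.

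\textbf{Step 2: time-reversal intertwining.} The claim is $T\Dvec^\Omega_\mu=\Dvec^\Omega_{\overline{\mu}}T$ on $\Dfrak$, with $T\Dfrak=\Dfrak$.
\begin{itemize}
\item The operator identity $T\Dvec_\mu=\Dvec_{\overline{\mu}}T$ was already used in Section \ref{Section3}. It follows from $T\beta=\beta T$, $T\alpha_j=-\alpha_jT$, antilinearity of $T$ (so $T(i\,\cdot)=-iT$), and $\overline{J_\mu}=J_{\overline{\mu}}$, $\overline{a_\mu}=a_{\overline{\mu}}$.
\item The new point is that $T$ preserves the MIT boundary condition. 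If $\psi\in H^1(\Omega)$ satisfies $-i\beta\alpha\cdot n\,\psi=\psi$ on $\partial\Omega$, then $-i\beta\alpha\cdot n\,T\psi=T\psi$. To check this, commute $T$ past $\beta\alpha\cdot n$; since $n$ is real, $T(\beta\alpha\cdot n)=-(\beta\alpha\cdot n)T$. Then antilinearity gives $T(-i\beta\alpha\cdot n\,\psi)=i\,T(\beta\alpha\cdot n\,\psi)=-i\beta\alpha\cdot n\,T\psi$. So the boundary condition is preserved. This short Clifford-algebra check is the one step I expect to need care, but it only uses the relations listed above.
\end{itemize}

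\textbf{Step 3: relate $\Pi^*$ and $T\Pi$.} By Lemma \ref{DistortedAdjointMIT}, $(\Dvec^\Omega_\mu)^*=\Dvec^\Omega_{\overline{\mu}}$. Step 2 then gives $((\Dvec^\Omega_\mu)^*-\overline{z})^{-1}T=T(\Dvec^\Omega_\mu-z)^{-1}$ for $0<\lv z-\lambda\rv\ll1$. Integrating over a small circle around $\lambda$, and using that $T$ is antilinear (so complex conjugation reverses the orientation, which the $\frac{1}{2\pi i}$ factor compensates), yields $\Pi^*T=T\Pi$.

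\textbf{Step 4: the linear-algebra argument.} Take a basis $(\psi_1,\dots,\psi_r)$ of $\Ran\Pi$ and set $M=(\la\psi_i,T\psi_j\ra)_{ij}$.
\begin{itemize}
\item Antiunitarity of $T$ and $T^2=-1$ give $M^\Tsf=-M$.
\item Suppose $M\nu=0$ and put $\psi=\sum_j\overline{\nu_j}\psi_j\in\Ran\Pi$. Then $T\psi$ is orthogonal to $\Ran\Pi$, so $\Pi^*T\psi=0$. By Step 3, $T\psi=T\Pi\psi=0$, hence $\psi=0$ and $\nu=0$. So $M$ is invertible.
\end{itemize}
From $\det M=\det M^\Tsf=(-1)^r\det M$ and $\det M\neq0$, we get that $r$ is even.
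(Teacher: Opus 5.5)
Your proposal is correct and follows the paper's proof. As in the paper, Lemmas \ref{FChoice} (with $K=\R^3\setminus\Omega$) and \ref{CatchedResonances} together with Theorem \ref{MITLinkResonanceEigenvalue} realize the resonance as a discrete eigenvalue of $\Dvec^\Omega_\mu$. Then $T\Dfrak\subset\Dfrak$, $T\Dvec^\Omega_\mu=\Dvec^\Omega_{\overline{\mu}}T$ and Lemma \ref{DistortedAdjointMIT} let you rerun the argument of Corollary \ref{KramersDegeneracy}, and you supply the boundary-condition check and the contour computation for $\Pi^*T=T\Pi$ that the paper only asserts.

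Keeping the Section 3 operator $T\psi=-i\gamma_5\alpha_2\overline{\psi}$ is also the right choice. The $\alpha_3$ written in the paper's proof is a slip: under the conventions that make the Section 3 formula work, $-i\gamma_5\alpha_3\overline{\,\cdot\,}$ squares to $1$ and commutes with $\alpha_3$.
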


\begin{proof}
    Thanks to Lemma \ref{FChoice} (with $K=\R^3\setminus\Omega$) and Lemma \ref{CatchedResonances} there is a suitable choice of a deformation $F$ and a distortion parameter $\mu$ such that $\lambda\in\Sp(\Dvec^\Omega_\mu)$ and the multiplicity of $(\lambda,\kappa)$ is the rank of the associated Riesz projection. Again we consider $T\psi:=-i\gamma_5\alpha_3\overline{\psi}$ for $\psi\in L^2(\Omega)$. Since $T\beta=\beta T$ and $T\alpha_j=-\alpha_jT$ then  $T\Dfrak\subset\Dfrak$ and $T\Dvec^\Omega_\mu=\Dvec^\Omega_{\overline{\mu}}T$. Thanks to Lemma \ref{DistortedAdjointMIT} we can mimic the proof of Corollary \ref{KramersDegeneracy} to conclude. 
\end{proof}

\appendix 

\section{Technical proofs}\label{Appendix}

\subsection{Proof of Proposition \ref{DistortedDerivatives}}

\begin{proof}
    Recall that $f_\mu(x)=x+\mu F(x)$. Since $\lv\mu\rv<1/L$ then
    \begin{align*}
        \det\nabla f_\mu(x)&\geq\lp1-\lV\mu \nabla F(x)\rV\rp^3\\
        &>0
    \end{align*}
    therefore $\nabla f_\mu(x)$ is invertible. Since $ x\mapsto\mu F(x)$ is a contraction mapping then $ f_\mu$ is one-to-one. Given $y\in\R^3$ the map $ x\mapsto y-\mu F(x)$ has a fixed point thanks to Banach-Picard Theorem therefore $ f_\mu(\R^3)=\R^3$. We conclude that $ f_\mu$ is a $ C^1-$diffeomorphism $\R^3$ to $\R^3$ thanks to the global inverse Theorem. We deduce that $U_\mu$ is an isometry of $ L^2$ thanks to the change of variables formula. Secondly, $U_\mu$ is invertible and 
    \[(U_\mu^{-1}\psi)(x):=\sqrt{\det \nabla f_\mu^{-1}(x)}\psi(f_\mu^{-1}(x)).\]
    thanks to the chain rule. This proves the unitarity property.
    
    Thanks to the chain rule, Leibniz formula and the differential of the determinant we find
    \begin{align*}
        \lp\partial_{x_j} U_\mu^{-1}\psi\rp(x)
        &=\frac{1}{2}\sqrt{\det \nabla f_\mu^{-1}(x)}\tr\lp \nabla f_\mu^{-1}(x)^{-1}\partial_{x_j}\nabla f_\mu^{-1}(x)\rp\psi(f_\mu^{-1}(x))\\
        &+\sqrt{\det \nabla f_\mu^{-1}(x)}\sum_{k=1}^3[\nabla f_\mu^{-1}(x)]_{kj}\partial_{x_k}\psi(f_\mu^{-1}(x))\\
        &=\frac{1}{2}\tr\lp \nabla f_\mu^{-1}(x)^{-1}\partial_{x_j}\nabla f_\mu^{-1}(x)\rp \lp U_\mu^{-1}\psi\rp(x)+\sum_{k=1}^3[\nabla f_\mu^{-1}(x)]_{kj}\lp U_\mu^{-1}\partial_{x_k}\psi\rp(x).
    \end{align*}
    In particular if $\psi\in H^1$ then $U_\mu^{-1}\psi\in H^1$. Also, since $f_\mu(f_\mu^{-1}(x))= x$ then 
    \begin{equation*}
        \nabla f_\mu(f_\mu^{-1}(x))\nabla f_\mu^{-1}(x)=I_3
    \end{equation*} and
    \[\sum_{k=1}^3[\nabla f_\mu^{-1}(x)]_{kj}\partial_{x_k}\nabla f_\mu(f_\mu^{-1}(x))\nabla f_\mu^{-1}(x)+\nabla f_\mu(f_\mu^{-1}(x))\partial_{x_j}\nabla f_\mu^{-1}(x)=0_3\]
    which provides the expected conclusion. 
    
\end{proof}

\subsection{Proof of Lemma \ref{TAnalyticity}}

\begin{proof}
    We need estimates on $K_\mu^z$ and take care of the dependance in $z$ and $\mu$ to ensure the analyticity properties.

    Clearly
    \begin{equation}\label{JacobianEstimates}
        \lv \rho_\mu(x)\rv\lesssim1.
    \end{equation}
    We continue with a lower bound on $\delta_\mu$. Factorizing $\lv x-y\rv$ we get
    \begin{equation}\label{NBelowEstimate}
        \lv \delta_\mu(x,y)\rv\geq c_\mu\lv x-y\rv
    \end{equation}
    where 
    \[c_\mu:=\inf_{\lv p\rv=1,\lv q\rv\leq L}\lv\sqrt{\lp p+\mu q\rp^2}\rv>0.\]
    As in Remark \ref{SquareRootDefinition} the square root in the infimimum makes sense. Also
    \begin{align*}
        &\delta_\mu(x,y)-(1+\mu)\lv x-y\rv\\
        &=\frac{
        (f_\mu(x)-f_\mu(y)-(1+\mu)(x-y))^\Tsf(f_\mu(x)-f_\mu(y)+(1+\mu)(x-y))}{\delta_\mu(x,y)+(1+\mu)\lv x-y\rv}\\
        &=\frac{\mu
        (F(x)-x+y-F(y))^\Tsf(F(x)-F(y)+(1+2\mu)(x-y))}{\delta_\mu(x,y)+(1+\mu)\lv x-y\rv}.
    \end{align*}
    Since $\Re\delta_\mu(x,y)>0$ then
    \[\lv \delta_\mu(x,y)+(1+\mu)\lv x-y\rv\rv>(1+\Re\mu)\lv x-y\rv\]
    and since $F(x)=x$ for $\lv x\rv\gg1$ and $\lv F(x)-F(y)\rv\lesssim\lv x-y\rv$ then
    \[\lv\mu(F(x)-x+y-F(y))^\Tsf(F(x)-F(y)+(1+2\mu)(x-y))\rv\lesssim\lv x-y\rv.\]
    Consequently
    \begin{equation}\label{NBehaviour}
        \lv \delta_\mu(x,y)-(1+\mu)\lv x-y\rv\rv\leq\frac{C}{1+\Re\mu}
    \end{equation}
    for some constant $C$ independent of $ x$, $y$ and $\mu$. Combining  \eqref{JacobianEstimates}, \eqref{NBelowEstimate} and \eqref{NBehaviour} we get
    \begin{equation}
        \lV K_\mu^z(x,y)\rV\lesssim\frac{e^{C\frac{\lv\omega_\mu(z)\rv}{1+\Re\mu}}}{c_\mu}\frac{e^{-\lv x-y\rv\Im\lp(1+\mu)\omega_\mu(z)\rp}}{\lv x-y\rv}\lp1+\lv z\rv+\frac{\lv\omega_\mu(z)\rv}{c_\mu}+\frac{1}{c_\mu^2}\frac{1}{\lv x-y\rv}\rp.
    \end{equation}
    
    As a consequence
    \[\sup_{ x\in\R^3}\int_{\R^3}\lV K_\mu^z(x,y)\rV dy, \sup_{y\in\R^3}\int_{\R^3}\lV K_\mu^z(x,y)\rV d x\lesssim\int_{\R^3}\frac{e^{-\lv v\rv\Im\lp(1+\mu)\omega_\mu(z)\rp}}{\lv v\rv}\lp1+\frac{1}{\lv v\rv}\rp dv\]
    and the integral in the r.h.s. is finite since $\Im(1+\mu)\omega_\mu(z)>0$. 
    Thanks to Schur's test $T_\mu^z$ is well defined and bounded from $ L^2$ into itself. The analyticity properties are obtained by applying the dominated convergence Theorem to $\la\phi,T_\mu^z\psi\ra_{L^2}$ for any $\psi,\phi\in L^2$.
\end{proof}

\printbibliography

\end{document}